\documentclass[12pt,a4paper]{amsart}

\usepackage{graphicx}

\usepackage{bm}
\usepackage{xcolor}
\usepackage{mathtools}

 \usepackage[hidelinks]{hyperref}

\usepackage{amsmath,amsthm,amsfonts,amssymb,latexsym,amscd}
\usepackage[shortlabels]{enumitem}
\usepackage{slashed}

\usepackage{palatino}
\usepackage[mathcal]{euler}

\usepackage{comment}
\usepackage{tikz-cd}
\usetikzlibrary{decorations.markings}

\usepackage{xy}
\xyoption{all}

\numberwithin{equation}{subsection}
\swapnumbers

\newtheorem{theorem}[equation]{Theorem}
\newtheorem*{theorem*}{Theorem}

\newtheorem{corollary}[equation]{Corollary}
\newtheorem{proposition}[equation]{Proposition}
\newtheorem*{proposition*}{Proposition}
\newtheorem{lemma}[equation]{Lemma}
\newtheorem*{conjecture*}{Conjecture}

\theoremstyle{definition}
\newtheorem{definition}[equation]{Definition}

\newtheorem{example}[equation]{Example}

\newtheorem{remark}[equation]{Remark}

\newcommand{\R}{\mathbb{R}}
\newcommand{\C}{\mathbb{C}} 
\newcommand{\N}{\mathbb{N}}
\newcommand{\Z}{\mathbb{Z}}

\DeclareMathOperator{\Tr}{Tr}

\DeclareMathOperator{\Ad}{Ad}

\newcommand{\lgd}{\mathfrak{lgd}}
\newcommand{\ad}{\operatorname{ad}}
\newcommand{\hotimes}{\mathbin{\hat \otimes}}
\DeclareMathOperator{\Prim}{Prim}
\DeclareMathOperator{\Indecomp}{Indecomp}

\usepackage{cancel} 

\begin{document}

\title[The Lie Algebra of Locally Generated Derivations]{Homology of the Lie Algebra of Locally Generated Derivations of a Discrete and Proper Metric Space}

\author{Nigel Higson}
\address{N. HIGSON, PENN STATE UNIVERSITY, UNIVERSITY PARK, PA, USA}

 \author{Tsuyoshi Kato}
 \address{T. KATO, KYOTO UNIVERSITY, KYOTO, JAPAN}

\begin{abstract}
We associate to each proper discrete metric space $X$  a Lie algebra that acts by locally generated derivations   on an infinite tensor product of matrix algebras indexed by the points of $X$.  We compute the  homology of this Lie  algebra with trivial scalar coefficients when $X$ is the  integer lattice in $n$-dimensional Euclidean space.
\end{abstract}

\maketitle


\section{Introduction}
Let $X$ be a discrete and proper metric space, and let $\alpha \colon X \to \N$ be any function. Denote by $A_\alpha(X)$  the infinite tensor product  algebra 
\[
A_\alpha (X) = \bigotimes _{x\in X} M_{\alpha(x)}(\C) .
\]
This is  the algebraic direct limit 
\[
A_\alpha (X) = \varinjlim _{F\subseteq X} A_\alpha (F)
\]
over the directed system of finite tensor products corresponding to finite subsets $F\subseteq X$. A derivation $\delta$ of the associative algebra $A_\alpha (X)$ is \emph{locally generated} if for some $R> 0$ there is a family of elements 
\[
H_x \in A_\alpha (B_X(x,R))\qquad (x\in X),
\]
where $B_X(x,R)$ is the ball of radius $R$ about $x\in X$, such that 
\[
\delta (T) = \sum_{x\in X} [H_x,T] \qquad \forall T \in A_\alpha (X)  
\]
(the sum  on the right  is actually finite, in the sense that all but finitely many summands are  zero).  The vector space of all locally generated generated derivations is closed under the commutator bracket and so is a Lie algebra that we shall denote by $\lgd_\alpha(X)$.  See Section~\ref{sec-lgd-x} for details.

In this paper we shall study the natural direct limit Lie algebra
\[
\lgd(X) = \varinjlim_{\alpha} \lgd_\alpha (X)
\]
(see Section~\ref{sec-lgd-x} again for details). 

Our main result
is the computation of the homology of this Lie algebra, with  coefficients in the trivial module $\C$, when $X = \Z^n$.  Our result is most easily described (and proved) using the standard coalgebra structure on homology, which is obtained from the  diagonal morphism
\[
\Delta \colon \lgd(X) \longrightarrow \lgd(X) \times \lgd(X).
\]
Associated to the coproduct there is a notion of primitive subspace in homology, and  shall prove the following result:

\begin{theorem*}[See Theorem~\ref{thm-suspension-isomorphism} below]
For all $n\ge 1$ and $p\ge 1$ there are vector space isomorphisms 
\[
\Prim \bigl (H_p(\lgd (\Z^{n-1}),\C)\bigr ) \stackrel \cong \longrightarrow \Prim \bigl (H_{p+1}(\lgd (\Z^{n}), \C)\bigr)
\]
relating  the primitive parts of Lie algebra homology for the Lie algebras  $\lgd (\Z^{n-1})$ and $\lgd (\Z^{n})$. 
\end{theorem*}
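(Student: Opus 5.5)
We will obtain the isomorphism as a suspension map coming from a Mayer--Vietoris-type argument, modelled on the classical proof that $K$-theory or cyclic homology of a Roe-type algebra of $\Z^n$ is a suspension of that of $\Z^{n-1}$. Write $\Z^n = \Z^{n-1}\times \Z$ and split it into the half-spaces $X_+ = \Z^{n-1}\times \Z_{\ge 0}$ and $X_- = \Z^{n-1}\times \Z_{<0}$. Inside $\lgd(\Z^n)$ we consider the subalgebras $\lgd(X_\pm)$ of derivations whose local generators are supported in $X_\pm$, and the subalgebra of derivations generated near the hyperplane $\Z^{n-1}\times\{0\}$. The last one should be quasi-isomorphic (after stabilization in $\alpha$) to $\lgd(\Z^{n-1})$, because a bounded neighbourhood of the hyperplane is coarsely equivalent to $\Z^{n-1}$. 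Taking the direct limit over $\alpha$ should make the local dimensions absorb this thickening.

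The key steps, in order, are as follows.

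\textbf{(1) Coarse invariance.} We first show that $H_*(\lgd(X),\C)$ depends only on the coarse type of $X$, with a uniform bound on how many points are merged. Concretely, grouping points into bounded clusters identifies $\lgd_\alpha(X)$ with $\lgd_\beta(Y)$, where $\beta$ multiplies the matrix sizes.

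\textbf{(2) Flasqueness.} We show that $\lgd(X_+)$ has trivial reduced primitive homology. We use an Eilenberg swindle: the shift $t\mapsto t+1$ in the last coordinate gives an injection of $X_+$ into itself. The infinite sum of iterated translates of a derivation is still locally generated, because the generators have disjoint, bounded supports. This yields an endomorphism $\sigma$ with $\mathrm{id}+\sigma\circ s \simeq \sigma$ on the level of Lie algebra maps into a product. On primitives, homology is additive under such block-sum Lie maps. Indeed, a direct sum of commuting Lie maps induces, via the coproduct, the sum on primitive elements. Hence $\Prim H_*(\lgd(X_+)) = 0$.

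\textbf{(3) Excision.} We establish a long exact sequence on primitive parts relating $\lgd(X_+\cap X_-\text{-nbhd})$, $\lgd(X_+)\oplus\lgd(X_-)$ and $\lgd(\Z^n)$. The natural route is through the Loday--Quillen--Tsygan philosophy. Because $\lgd$ is, in the limit over $\alpha$, a kind of "matrix-stable" Lie algebra, its homology should be the free graded-commutative algebra on its primitives. The primitives should then be computable from a cyclic-type homology of an underlying associative (or $A_\infty$) object. Excision then follows from excision for that cyclic theory. Its boundary map $\Prim H_{p+1}(\lgd(\Z^n))\to \Prim H_p(\lgd(\Z^{n-1}))$ is inverse to the map in the statement. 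Combined with (2), the connecting map is an isomorphism.

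The main obstacle is step (3). Lie algebra homology has no excision in general, and $\lgd$ is not literally $\mathfrak{gl}$ of an algebra, so LQT does not apply directly. I would first try to prove the Hopf-algebra structure and a stability statement directly: the maps $\lgd_\alpha\to\lgd_{\alpha\beta}$ induce "direct sum" structures making $H_*(\lgd(X))$ a connected commutative cocommutative Hopf algebra, so it is generated by primitives. Then I would construct the suspension map explicitly, by a cap/commutator with a derivation that cuts off at the hyperplane (a "Toeplitz"-type cocycle). Injectivity and surjectivity would be checked using the swindle on each half-space, together with a short exact sequence of Lie algebras $0\to\mathfrak{k}\to\lgd(X_+)\to \lgd(\Z^n)/\!\!\sim$, analysed via the Hochschild--Serre spectral sequence restricted to primitives.
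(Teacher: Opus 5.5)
Your steps (1) and (2) correspond to ingredients the paper actually uses: the clustering argument of Theorem~\ref{thm-absolute-v-relative} and the half-space swindle of Theorem~\ref{thm-eilenberg-swindle-for-half-spaces}. Step (3), where the suspension actually happens, is a genuine gap. Lie algebra homology, primitive part included, has no Mayer--Vietoris sequence for a cover by two subalgebras. Moreover, with your disjoint half-spaces, $\lgd(X_+)+\lgd(X_-)$ is not even all of $\lgd(\Z^n)$, since generators straddling the hyperplane are missed. The Loday--Quillen--Tsygan/cyclic-excision route is, as you concede, unavailable. So neither your long exact sequence nor its boundary map is ever constructed, and ``Hochschild--Serre restricted to primitives'' presupposes exactly what has to be built.

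The extension in your fallback is the right object, but the argument around it is missing. Here is what the paper does.
\begin{enumerate}[\rm (a)]
\item Excision is proved for ideals and quotients, not subalgebras. It gives an honest Lie algebra isomorphism $\lgd(\Z^n_-)/\lgd(\Z^n_-,\Z^{n-1})\cong\lgd(\Z^n)/\lgd(\Z^n,\Z^n_+)$.
\item The ideal $\lgd(\Z^n,\Z^n_+)$ has the homology of $\lgd(\Z^n_+)$, hence is acyclic. A spectral sequence comparison then shows that $\lgd(\Z^n)$ and this quotient have the same homology.
\item Take $\mathfrak g=\lgd(\Z^n_-)$, which is acyclic, and the ideal $\mathfrak h=\lgd(\Z^n_-,\Z^{n-1})$, whose homology is that of $\lgd(\Z^{n-1})$ by (1). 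Acyclicity of $\mathfrak g$ forces the transgression $d^{(p+1)}\colon E^{p+1}_{p+1,0}\to E^{p+1}_{0,p}$ to be an isomorphism.
\end{enumerate}
The real work is identifying the source of $d^{(p+1)}$ with $\Prim H_{p+1}(\mathfrak g/\mathfrak h)$ and its target with $\Indecomp H_p(\mathfrak h)\cong\Prim H_p(\mathfrak h)$. This needs every page $E^r$, $r\ge 2$, to be a commutative, cocommutative, connected Hopf algebra. That in turn requires two things:
\begin{itemize}
\item $E^2\cong H_*(\mathfrak g/\mathfrak h)\hotimes H_*(\mathfrak h)$, i.e.\ a proof that $\lgd(\Z^n_-)$ acts trivially on $H_*(\mathfrak h)$. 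The paper does this by splitting a derivation into a piece commuting with $\lgd$ of a penumbra of $\Z^{n-1}$ and a piece that is inner on a larger penumbra.
\item Product and correcting morphisms that restrict to the ideal and pass to the quotient.
\end{itemize}
Milnor--Moore then shows that off-edge terms contain no primitives. Hence differentials vanish on bottom-edge primitives before page $p+1$, while everything a differential hits on the left edge is decomposable.

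A second, more technical omission underlies (1), (2) and the Hopf structure itself. You need re-indexing of tensor factors (the correcting maps, the shift $s$, clustering followed by inclusion) to induce the identity on homology; otherwise ``$\mathrm{id}+\sigma\circ s\simeq\sigma$'' yields no identity in homology. The paper gets this because the flip of $A_{2\alpha(x)}$ is conjugation by a product of six elements $\exp(N)$ with $N^2=0$ (this uses $\alpha(x)\ge 2$), and exponentials of locally nilpotent inner derivations act trivially on homology.

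Finally, in (2) the translates $s^k\delta$ do \emph{not} have disjoint supports; they overlap in space. They commute only because they are placed in disjoint tensor slots via injections $\gamma_k\colon\N\to\N$ with disjoint ranges. The number of slots needed at height $m$ grows with $m$, which is why the limit over possibly unbounded dimension functions $\alpha$ is essential.
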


By combining this with an explicit computation in the case $n{=}0$, we obtain the following result (from which the full Lie algebra homology may be computed using the Milnor-Moore theorem \cite[Thm.~5.18]{MilnorMoore65}, as discussed in Section~\ref{sec-lie-algebra-homology} below).

\begin{theorem*}[See Theorem~\ref{thm-explicit-formula-for-prinitive-part} below]
If $n\ge 0$, then
    \[
    \Prim \bigl (H_{p} (\lgd(\Z^n), \C )\bigr)  =
    \begin{cases}
        \C & p= n{+}3, n{+}5,n{+}7\dots 
        \\
         0 & \text{otherwise.}
    \end{cases}
    \]
\end{theorem*}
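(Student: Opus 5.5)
The plan is an induction on $n$. Theorem~\ref{thm-suspension-isomorphism} provides isomorphisms
\[
\Prim H_p(\lgd(\Z^{n-1}),\C)\cong \Prim H_{p+1}(\lgd(\Z^n),\C)
\]
for $p\ge 1$. These shift the claimed pattern by exactly one degree: the sequence $n{+}2, n{+}4,\dots$ for $\Z^{n-1}$ becomes $n{+}3,n{+}5,\dots$ for $\Z^n$. The inductive step therefore reduces to two things. First, the degree $p+1=1$ (that is, $p=0$) must be handled separately. Second, the base case $n=0$ must be computed directly.

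\textbf{Degree one.} We have $H_1(\lgd(\Z^n),\C)=\lgd/[\lgd,\lgd]$, and this space is entirely primitive. We must show it vanishes, i.e.\ that $\lgd(\Z^n)$ is perfect. The approach is to write a locally generated derivation $\sum_x[H_x,\,\cdot\,]$ with $H_x$ supported near $x$. After enlarging $\alpha$ by a direct-limit step (tensoring each site with a further matrix factor), each $H_x$ can be written as a sum of commutators of uniformly locally supported elements. Because the supports have bounded size, the resulting commutators reassemble into a bracket of locally generated derivations. Traceless local pieces are commutators in a matrix algebra. The scalar parts act trivially, so they contribute nothing to the derivation.

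\textbf{Base case $n=0$.} Here $X$ is a point, so
\[
\lgd(\mathrm{pt})=\varinjlim_k \mathfrak{gl}_k(\C)/\text{scalars}=\varinjlim_k\mathfrak{sl}_k(\C).
\]
This is the limit of $\mathfrak{sl}_k$ under the maps induced by tensor-product (unital) embeddings, and these induce the standard maps on homology up to a multiplicity. By the classical computation (Chevalley, Hopf, Borel), $H_*(\mathfrak{sl}_k,\C)$ is an exterior algebra on primitive generators in degrees $3,5,\dots,2k-1$, and stably the primitive part is $\C$ in each degree $3,5,7,\dots$. To pass to the limit, we need the embedding $M_k\to M_k\otimes M_l$ to induce an injective, hence in the limit bijective, map on each primitive class. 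The primitive class in degree $2j-1$ is dual to the cocycle $\Tr(X_1[X_2,X_3]\cdots)$. Under $a\mapsto a\otimes 1$ this cocycle pulls back to $l$ times itself, which is nonzero. So $\Prim H_p(\lgd(\mathrm{pt}))$ is $\C$ for $p=3,5,\dots$ and $0$ otherwise, matching the formula at $n=0$.

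\textbf{Assembling the induction.} For $n\ge1$ and $p\ge2$, the statement follows from the case $n-1$ via the suspension isomorphism. Degree $1$ is covered by perfectness. Degree $0$ is excluded, since $H_0=\C$ and the primitive part is $0$. I expect the main obstacle to be the perfectness statement in degree one. Showing that the commutator decomposition can be carried out with \emph{uniformly} bounded propagation, and that it respects the direct limit over $\alpha$, requires some care about support radii.
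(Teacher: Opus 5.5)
Your induction is the paper's argument. Theorem~\ref{thm-explicit-formula-for-prinitive-part} is obtained by combining Theorem~\ref{thm-suspension-isomorphism} with the $n=0$ computation of Theorem~\ref{thm-homology-of-lgd-of-pt}. Your base case is the same normalization argument the paper gives: the trace cocycles pull back to $l$ times themselves under $a\mapsto a\otimes 1$, so the one-dimensional primitive spaces map isomorphically.

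The only departure is degree one, and there you did not need a separate argument. Theorem~\ref{thm-suspension-isomorphism}, as stated in the body, already includes $\Prim H_1(\lgd(\Z^n),\C)=0$. This comes from the $p=0$ case of Theorem~\ref{thm-primitive-element-theorem}. The swindle gives $H_1(\lgd(\Z^n_-),\C)=0$, and this group surjects onto $H_1$ of $\lgd(\Z^n_-)/\lgd(\Z^n_-,\Z^{n-1})$. That quotient has the same homology as $\lgd(\Z^n)$ by Theorem~\ref{thm-excision-isomorphism} and Proposition~\ref{prop-quotient-by-acyclic-ideal}.

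If you keep the direct perfectness argument (a legitimate, more elementary route), the reassembly step needs repair. Write $H_x=[A_x,B_x]$ with $A_x,B_x\in A_\alpha(B_X(x,R))$. Then the bracket $[\sum_x\ad_{A_x},\sum_y\ad_{B_y}]=\sum_{x,y}\ad_{[A_x,B_y]}$ has nonzero cross terms whenever $d(x,y)<2R$, and bounded support size does nothing to remove them.

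The fix is to partition $\Z^n$ into the $m^n$ residue classes modulo an integer $m\ge 2R$. Within a class $c$ the balls $B_X(x,R)$ are pairwise disjoint, so $\sum_{x\in c}\ad_{H_x}=[\sum_{x\in c}\ad_{A_x},\sum_{x\in c}\ad_{B_x}]$ exactly. Hence $\delta$ is a sum of $m^n$ brackets in $\lgd_\alpha(\Z^n)$.

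First subtract the normalized trace from each $H_x$, which does not change $\ad_{H_x}$. Then use that every traceless matrix is a single commutator (Shoda). With this, no enlargement of $\alpha$ is needed, and the uniform-propagation difficulty you anticipated does not arise.
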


Derivations of  the kind that we  consider in this paper  occur in the physics literature, related to multiparticle systems with finite-range entanglement, although in physics it is usually expedient  to work with a variation  of our $\lgd(X)$. Often this is in order to study the associated  group    of ``locally generated automorphisms,'' consisting of  the endpoints of all smooth paths $\alpha_t$ ($t\in [0,1]$) of automorphisms that begin at the identity and satisfy a relation like
\[
\frac{d\alpha_t}{dt}  \in \lgd (X)\qquad \forall t\in [0,1].
\]
For this purpose it is natural to replace $\lgd(X)$ with a Lie algebra that is complete in a suitable topology. See for instance \cite{KapustinSopenkoYang21} or \cite{Kubota2025}.  But in this paper we shall limit ourselves to purely algebraic constructions. Accordingly, the results reported in this paper will be   mathematical only, and we make no claim concerning their relevance to  physics.

The proof of our theorem  is modeled closely on the computation of $K$-theory for the $C^*$-algebras that arise in coarse geometry  \cite{Roe96CBMS}, and in fact the statement of  our theorem may be   paraphrased reasonably well by the assertion 
that the homological invariants of 
$\lgd(X)$ are the same as the standard homological invariants one encounters in coarse geometry.  This is perhaps surprising, but it is consistent with  other recent and closely related mathematical discoveries; see for instance \cite{Kubota2025}.

In a bit more detail, the proof of our  theorem   has two parts.  The first is the analysis of a natural  Hopf algebra structure that may be placed on $H_*(\lgd(X),\C)$, and also on the Hochschild-Serre spectral sequence. See Section~\ref{sec-hochschild-serre}.  The second is the proof, via an Eilenberg swindle argument, of  the vanishing of $H_*(\lgd(X),\C)$ in positive degrees when $X$ is a half-space in $\Z^n$; see Theorem~\ref{thm-eilenberg-swindle-for-half-spaces}.  Both parts make  use of the fact that the definition of $\lgd(X)$ involves a direct limit over \emph{all} dimension functions $\alpha \colon X \to \N$; moreover for the second part it is essential that we allow the function $\alpha$ to be possibly unbounded.

Apart from studying the Lie algebra $\lgd(X)$, we shall also study the Lie subalgebra $\lgd^0(X)$ that fixes a given, suitably chosen state $\varphi$ on $A(X)=\varinjlim A_\alpha (X)$. See Section~\ref{sec-subalgebra-fixing-a-state}. This modest generalization fixes a small aesthetic flaw in $\lgd(X)$, which is that when $X$ is a point, the primitive part of Lie algebra homology is zero in all even degrees, but also in one odd degree, namely degree one.   By considering the Lie subalgebra of derivations that fix $\varphi$, the missing generator is recovered.

It is a pleasure to thank Yosuke Kubota and Matthias Ludewig for stimulating conversations and helpful advice.

  \subsection*{Acknowledgments}

Nigel Higson’s research was supported by the
NSF grant DMS-1952669, and
  Tsuyoshi Kato's research was supported by JSPS KAKENHI 23K22394.

\section{The Lie algebra of locally generated derivations}
\label{sec-lgd-x}

Let  $X$  be a proper discrete metric space  (\emph{proper} means that every ball contains only finitely many points).  Our aim in this section is to carefully define a Lie algebra $\lgd(X)$ comprised of ``locally generated derivations''  of an infinite tensor product of matrix algebras whose individual matrix factors are labeled by points of $X$.   

\subsection{Infinite tensor product algebra}
 
Given $n\ge 0$, we  define  
\[
A_n = \underbrace{M_2(\C) \otimes \cdots \otimes M_2 (\C)}_{\text{$n$ times}} 
\]
(we set $A_0=\C$). If $m \le n$, then we shall regard $A_m$ as embedded in $A_n$ via  the map
\begin{equation}
    \label{eq-embedding-of-matrix-algebras}
T_1\otimes \cdots \otimes T_m\longmapsto
T_1\otimes \cdots \otimes T_m \otimes \underbrace{I \otimes \cdots \otimes I.}_{\text{$n{-}m$ times}}
\end{equation}

\begin{definition}
\label{def-embeddings-of-a-f-algebras-1}
    Let $X$ be a proper, discrete metric space, and let  
\[
\alpha \colon X \longrightarrow \N 
\]
be any function.  
If $F$ is any finite subset of $X$, then define  
\[
A_\alpha(F) = \bigotimes _{x\in F} A_{\alpha(x)}.
\]
If $F'$ and $F''$ are two finite subsets of $X$ with $F'\subseteq F''$, then we shall always regard $A_\alpha (F')$ as embedded in $A_\alpha (F'')$ via the map
\begin{equation}
\label{eq-embed-a-f-prime-into-a-f-double-prime}
\begin{gathered}
    A_\alpha (F')\longrightarrow A_\alpha (F'')
    \\
 \bigotimes_{x\in F'} S_x \longmapsto  \bigotimes_{x\in F''} T_x,
\end{gathered}
\end{equation}
where 
\begin{equation*}
T_x = \begin{cases} S_x & x \in F' \\
I_x & x \notin F' 
\end{cases}
\end{equation*}
(here $S_x\in A_{\alpha(x)}$, and $I_x$ denotes the multiplicative identity element in $A_{\alpha(x)}$).
\end{definition}

\begin{definition} 
\label{def-a-alpha-of-x}
Let $X$ be a proper, discrete metric space, and let  $\alpha \colon X \to \N $
be any function.  We define  
\[
A_\alpha(X) = \varinjlim _F A_\alpha (F),
\]
where the direct limit is over the finite subsets of $X$.
\end{definition}

Given $\alpha\colon X \to \N$ as above, if $\beta\colon X \to \N$ is a second function, then we shall write 
\begin{equation}
    \label{eq-order-on-alphas}
\beta \ge \alpha \quad \Leftrightarrow \quad 
\beta (x) \ge \alpha (x) \,\,\, \forall x \in X.
\end{equation}
If $\beta \ge \alpha$, then using the formula  \eqref{eq-embedding-of-matrix-algebras} we obtain an embedding 
\begin{equation}
\label{eq-embedding-of-a-alpha-f}
\iota_{\beta,\alpha}\colon A_{\alpha}(F)\longrightarrow A_{\beta}(F)
\end{equation}
for every    finite subset $F$ of $X$.  These embeddings are compatible with inclusions $F'\subseteq F''$ in Definition~\ref{def-embeddings-of-a-f-algebras-1}, and we obtain from them an embedding of direct limit algebras  
\begin{equation}
\label{eq-embedding-of-a-alpha-x}
\iota_{\beta,\alpha}\colon  A_\alpha (X) \longrightarrow A_\beta(X).
\end{equation}
If $\gamma \ge \beta \ge \alpha$, then $\iota_{\gamma, \alpha}= \iota_{\gamma,\beta}\circ \iota_{\beta,\alpha}$, and as a result we may make the following definition: 

\begin{definition} 
\label{def-a-of-x}
Let $X$ be a proper discrete metric space.  We define 
\[
A(X) = \varinjlim A_\alpha (X),
\]
where the direct limit is over the directed system of all maps $\alpha \colon X \to \N$, using the ordering in \eqref{eq-order-on-alphas} and the morphisms in \eqref{eq-embedding-of-a-alpha-x}.
\end{definition}

\subsection{Lie algebra of locally generated derivations}

Now we are able to define the Lie algebras that we wish to study. First, we record the following simple fact.

\begin{lemma}
\label{lem-local-factors-for-disjoint-sets-commute}
Let $X$ be a proper discrete metric space and let $\alpha \colon X \to \N$ be any function. If $F$ and $F'$ are disjoint finite subsets of $X$, then \[
[A_\alpha(F), A_\alpha(F')]=0
\]
\textup{(}the commutator is calculated within $A_\alpha (X)$\textup{)}.\qed
\end{lemma}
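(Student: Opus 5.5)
The plan is to reduce to a single finite set containing both $F$ and $F'$, and then compute with elementary tensors. Since $A_\alpha(X)$ is the direct limit of the algebras $A_\alpha(G)$ over finite $G\subseteq X$, and the connecting maps \eqref{eq-embed-a-f-prime-into-a-f-double-prime} are injective unital algebra homomorphisms, a commutator of two elements of $A_\alpha(X)$ may be computed inside any $A_\alpha(G)$ that contains both of them. So I take $G = F\cup F'$ and regard $A_\alpha(F)$ and $A_\alpha(F')$ as subalgebras of $A_\alpha(G)$.

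Because $F\cap F'=\emptyset$, there is a canonical identification
\[
A_\alpha(G)=\bigotimes_{x\in G}A_{\alpha(x)}\cong A_\alpha(F)\otimes A_\alpha(F'),
\]
obtained by regrouping the tensor factors. Under this identification the embedding of $A_\alpha(F)$ becomes $S\mapsto S\otimes I$, and the embedding of $A_\alpha(F')$ becomes $S'\mapsto I\otimes S'$. This is immediate from the definition of the maps \eqref{eq-embed-a-f-prime-into-a-f-double-prime}, which insert identity elements $I_x$ at the points outside the smaller set.

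Next I use the multiplication rule in a tensor product of associative algebras:
\[
(S\otimes I)(I\otimes S') = S\otimes S' = (I\otimes S')(S\otimes I).
\]
Hence $[S\otimes I,\, I\otimes S']=0$. This holds first for elementary tensors, and then for all elements by bilinearity of the commutator, since $A_\alpha(F)$ and $A_\alpha(F')$ are spanned by elementary tensors.

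There is no real obstacle here. The only point requiring any care is bookkeeping: checking that the regrouping isomorphism is an algebra isomorphism and that it is compatible with the embeddings. Both follow because the product on a finite tensor product of algebras is computed factor by factor.
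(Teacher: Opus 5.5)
Your proposal is correct and is the routine argument the paper leaves implicit, since the lemma is stated without a written proof: compute in $A_\alpha(F\cup F')\cong A_\alpha(F)\otimes A_\alpha(F')$, where the two subalgebras sit as $A_\alpha(F)\otimes I$ and $I\otimes A_\alpha(F')$ and so commute, and then push forward along the canonical homomorphism into $A_\alpha(X)$. Injectivity of the connecting maps is not actually needed, because an algebra homomorphism sends a zero commutator to zero.
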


\begin{definition} 
\label{def-locally-generated-derivation}
Let $X$ be a proper metric space and let $\alpha \colon X \to \N$ be any function.
For $R\ge 0$ and $x\in X$, denote by $B_X(x,R)$ the open ball in $X$ of radius $R$ around $x$. Define the linear space 
\[
\lgd_{\alpha,R} (X) \subseteq \operatorname{Der} (A_\alpha(X)).
\]
to be  the set of all derivations 
\begin{equation}
\label{eq-form-of-delta-1}
\delta \colon A_\alpha (X) \longrightarrow A_\alpha (X)
\end{equation}
of the associative algebra $A_\alpha (X)$ for which there exist elements 
\[
H_x \in   A_\alpha(B_X(x,R)) \qquad (x\in X)
\]
such that 
\begin{equation}
\label{eq-form-of-delta-2}
\delta (T) = \sum _{x\in X} [H_x,T] \qquad \forall T\in A_\alpha (X).
\end{equation}
Note that it follows from the construction of $A_\alpha (X)$ in Definition~\ref{def-a-alpha-of-x} as a direct limit that  if $T \in A_\alpha (X)$, then there is some finite subset $F\subseteq X$ such that $T \in A_\alpha (F)$; as a result of this and Lemma~\ref{lem-local-factors-for-disjoint-sets-commute}, in the sum above, at most finitely many of the commutators $[H_x,T]$ are nonzero.
\end{definition}

\begin{lemma} 
If $\delta\in \lgd_{\alpha,R} (X)$ and $\varepsilon\in \lgd_{\alpha,S} (X)$, then the commutaor $[\delta,\varepsilon] = \delta\varepsilon{-}\varepsilon\delta$ is an element of $ \lgd_{\alpha,R+2S}(X)$. \qed
\end{lemma}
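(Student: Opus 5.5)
We will compute $[\delta,\varepsilon]$ directly from local generators and exhibit local generators for it. Write $\delta = \sum_x \ad(H_x)$ with $H_x \in A_\alpha(B_X(x,R))$ and $\varepsilon = \sum_y \ad(K_y)$ with $K_y\in A_\alpha(B_X(y,S))$. For $T\in A_\alpha(X)$, the Jacobi identity gives $[\ad H_x,\ad K_y] = \ad [H_x,K_y]$. Hence, formally,
\[
[\delta,\varepsilon](T) = \sum_{x,y} \bigl[[H_x,K_y],T\bigr].
\]
By Lemma~\ref{lem-local-factors-for-disjoint-sets-commute}, $[H_x,K_y]=0$ unless $B_X(x,R)\cap B_X(y,S)\neq\emptyset$, and this forces $d(x,y)< R+S$.

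First we justify the rearrangement. Fix $T\in A_\alpha(F)$ with $F$ finite. Then $\delta(\varepsilon(T)) = \sum_x [H_x,\sum_y [K_y,T]]$. Only the finitely many $y$ with $B(y,S)\cap F\ne\emptyset$ contribute to the inner sum, and the result lies in $A_\alpha(F')$ for a finite $F'$. Consequently only finitely many $x$ contribute to the outer sum. The same holds for $\varepsilon\delta$. So everything is a finite double sum over a finite set of pairs $(x,y)$. Enlarge that set to a common finite set of pairs that covers both expressions; all extra terms vanish by Lemma~\ref{lem-local-factors-for-disjoint-sets-commute}. Expanding then gives $\sum_{x,y}\bigl([H_x,[K_y,T]]-[K_y,[H_x,T]]\bigr)=\sum_{x,y}[[H_x,K_y],T]$.

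Next we regroup by the index $y$. Set
\[
L_y = \sum_{x} [H_x,K_y].
\]
This sum is finite, because only $x$ with $d(x,y)<R+S$ contribute and $X$ is proper. Each nonzero term lies in $A_\alpha(B_X(x,R)\cup B_X(y,S))$. A point $z\in B_X(x,R)$ satisfies $d(z,y)< R+(R+S)=2R+S$, so $L_y\in A_\alpha(B_X(y,2R+S))$. This yields membership in $\lgd_{\alpha,2R+S}(X)$.

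The statement asserts $R+2S$ instead, so we regroup by $x$. Set
\[
L_x=\sum_y [H_x,K_y].
\]
A point $z\in B(y,S)$ with $d(x,y)<R+S$ satisfies $d(z,x)<R+2S$. Therefore $L_x\in A_\alpha(B_X(x,R+2S))$, and $[\delta,\varepsilon](T)=\sum_x[L_x,T]$ with the sum finite. This is exactly the stated bound.

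The main obstacle is the careful finiteness bookkeeping that legitimizes interchanging the sums. It requires properness of $X$ and the fact that each $T$ is supported on a finite set. The remaining steps are routine.
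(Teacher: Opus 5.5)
Your argument is correct and is the routine direct computation the paper leaves to the reader (the lemma is stated with \qed and no proof given): you expand $[\delta,\varepsilon]$ as $\sum_{x,y}\ad_{[H_x,K_y]}$, justify the finiteness by properness and the finite support of each $T$, and regroup by the index of $\delta$ to obtain the stated radius $R+2S$. Your side remark is also valid: grouping instead by the index of $\varepsilon$ gives $2R+S$, so in fact the commutator lies in $\lgd_{\alpha,\min(R+2S,\,2R+S)}(X)$.
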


\begin{definition} 
Let $X$ be a proper metric space and let $\alpha \colon X \to \N$ be any function. Define 
\[
\lgd_\alpha (X) = \bigcup_{R\ge 0}   \lgd_{\alpha,R} (X) ;
\]
this is a Lie algebra of derivations of the associative algebra $A_\alpha (X)$.  
\end{definition} 

Now if $\alpha \le \beta$ in the sense of \eqref{eq-order-on-alphas}, and if $\delta\in A_\alpha (X)$, so that $\delta$ has form described in \eqref{eq-form-of-delta-1} and \eqref{eq-form-of-delta-2} above, then using the embeddings 
\[
A_\alpha (B_X(x,R))\longrightarrow A_\beta (B_X(x,R)) \qquad (R \ge 0,\,\,\, x\in X)
\]
from \eqref{eq-embedding-of-a-alpha-f}, we may also regard $\delta$ as an element of $\lgd _\beta (X)$.  This defines an embedding of Lie algebras 
\begin{equation}
    \label{eq-embedding-of-lgd-alpha-into-lgd-beta}
\iota_{\beta, \alpha}\colon \lgd_\alpha (X) \longrightarrow \lgd_\beta(X)
\end{equation}
that is compatible with the actions of the two Lie algebras on $A_\alpha(X)$ and $A_\beta(X)$, respectively, and with the functorial (in $\alpha)$ embedding of $A_\alpha(X)$ into $A_\beta (X)$ in \eqref{eq-embedding-of-a-alpha-x}.

\begin{definition}
Let $X$ be a proper metric space.  We set 
\[
\lgd(X) = \varinjlim_\alpha \lgd_\alpha(X),
\]
where the direct limit is over the directed system of all $\alpha \colon X\to \N$.  This is a Lie algebra of derivations of $A(X)$.
\end{definition}

\subsection{Lie subalgebras  and ideals}

Throughout this subsection,  $X$ will be a proper metric space,  $\alpha \colon X \to \N$ will be be any function, and 
 $Y \subseteq X$ will be any subspace.

\begin{definition}
Define 
$\lgd_\alpha  (X,Y)\subseteq \lgd_\alpha (X)$
to be space of all derivations  $\delta$ for which there exist $R>0$ and   $H_y\in A_\alpha (B_X(y,R))$, for  $y\in Y$, with
\[
\delta (T) = \sum _{y\in Y} [H_y,T] \qquad \forall T\in A_\alpha (X).
\]
To be clear, we emphasize that $B_X(y,R)$ is the ball around $y$ of radius $R$ in the ambient space $X$.
\end{definition}

\begin{lemma} The subspace 
$\lgd_\alpha  (X,Y)\subseteq \lgd _\alpha (X)$
is a Lie algebra ideal. \qed
\end{lemma}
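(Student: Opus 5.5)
We need to show two things: that $\lgd_\alpha(X,Y)$ is a linear subspace of $\lgd_\alpha(X)$, and that it is closed under bracketing with arbitrary elements of $\lgd_\alpha(X)$.

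For the subspace property, take $\delta,\delta'\in\lgd_\alpha(X,Y)$ with generators $H_y$ and $H'_y$ and radii $R$ and $R'$. Set $R''=\max(R,R')$. Since $B_X(y,R)\subseteq B_X(y,R'')$, the embeddings of Definition~\ref{def-embeddings-of-a-f-algebras-1} let us regard $H_y$ and $H'_y$ as elements of $A_\alpha(B_X(y,R''))$. Then $\delta+\delta'$ and scalar multiples of $\delta$ are generated by $H_y+H'_y$ and $\lambda H_y$ respectively. Each such element is also in $\lgd_\alpha(X)$, by setting $H_x=0$ for $x\notin Y$.

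For the ideal property, let $\delta\in\lgd_\alpha(X,Y)$ be given by $H_y\in A_\alpha(B_X(y,R))$ for $y\in Y$. Let $\varepsilon\in\lgd_{\alpha,S}(X)$ be given by $K_x\in A_\alpha(B_X(x,S))$ for $x\in X$. Fix $T\in A_\alpha(F)$ with $F$ finite. Because the sums are finite on $T$, we can expand
\[
[\varepsilon,\delta](T)=\varepsilon(\delta(T))-\delta(\varepsilon(T))=\sum_{y\in Y}\Bigl(\varepsilon([H_y,T])-[H_y,\varepsilon(T)]\Bigr)=\sum_{y\in Y}[\varepsilon(H_y),T].
\]
The last equality uses the Leibniz rule, $\varepsilon([H_y,T])=[\varepsilon(H_y),T]+[H_y,\varepsilon(T)]$. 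Set
\[
G_y=\varepsilon(H_y)=\sum_{x\in X}[K_x,H_y].
\]
By Lemma~\ref{lem-local-factors-for-disjoint-sets-commute}, only those $x$ with $B_X(x,S)\cap B_X(y,R)\neq\emptyset$ contribute. Each such $x$ has $d(x,y)<R+S$, so $B_X(x,S)\subseteq B_X(y,R+2S)$. Hence $G_y\in A_\alpha(B_X(y,R+2S))$, and this is a finite sum by properness. Therefore $[\varepsilon,\delta]\in\lgd_\alpha(X,Y)$ with radius $R+2S$. Since $[\delta,\varepsilon]=-[\varepsilon,\delta]$, it lies there as well.

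The one point that needs care is exchanging $\varepsilon$ with the infinite sum $\sum_y[H_y,T]$. This sum is in fact finite for fixed $T$, and $\varepsilon$ is linear, so the manipulation is legitimate. The second sum, $\sum_y[H_y,\varepsilon(T)]$, is also finite because $\varepsilon(T)$ lies in some $A_\alpha(F')$ with $F'$ finite. Apart from this check, the argument is the same bookkeeping as in the preceding lemma on commutators.
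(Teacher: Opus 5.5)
Your proposal is correct and is essentially the argument the paper intends; the paper leaves this lemma unproved, as it does the preceding commutator lemma. You use the Leibniz rule to rewrite $[\varepsilon,\delta]$ as $\sum_{y\in Y}\ad_{\varepsilon(H_y)}$, then check that $\varepsilon(H_y)=\sum_x[K_x,H_y]$ lies in $A_\alpha(B_X(y,R+2S))$, which is the same bookkeeping behind $[\lgd_{\alpha,R}(X),\lgd_{\alpha,S}(X)]\subseteq\lgd_{\alpha,R+2S}(X)$ with the index set kept inside $Y$.
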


\begin{definition} 
We define 
\[
\lgd (X,Y) = \varinjlim \lgd_\alpha (X,Y) ,
\]
where the direct limit is over the directed system of all $\alpha \colon X \to \N$ using \eqref{eq-order-on-alphas} and \eqref{eq-embedding-of-a-alpha-x}.  This is a Lie algebra ideal in $\lgd (X)$.
\end{definition}

\begin{definition}
For $R\ge 0$, we set
\[ 
\operatorname{Pen}_X(Y;R) = \{\, x\in X :  \text{$d(x,y) \le R$ for some $y\in Y$}\,\}.
\]
\end{definition}

\begin{lemma}
\label{lem-lgd-ideal-is-increasing-union}
$\lgd_\alpha (X,Y) = \bigcup_{R>0} \lgd _\alpha (\operatorname{Pen}_X(Y,R))$. \qed 
\end{lemma}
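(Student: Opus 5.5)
The plan is to prove the two inclusions directly from the definitions. Throughout, $\lgd_\alpha(\operatorname{Pen}_X(Y,R))$ is read as a Lie algebra of derivations of $A_\alpha(X)$: a derivation of $A_\alpha(\operatorname{Pen}_X(Y,R))$ given by a sum $\sum_{x}[H_x,\,\cdot\,]$ acts on all of $A_\alpha(X)$ by the same formula. Only finitely many terms are nonzero on any given $T\in A_\alpha(X)$, by Lemma~\ref{lem-local-factors-for-disjoint-sets-commute}. The key geometric inputs are the triangle inequality and properness of $X$.

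\emph{First inclusion.} Let $\delta\in\lgd_\alpha(X,Y)$ be given by elements $H_y\in A_\alpha(B_X(y,R))$ for $y\in Y$. Every point of $B_X(y,R)$ lies within distance $R$ of $y\in Y$, so
\[
B_X(y,R)\subseteq B_{\operatorname{Pen}_X(Y,R)}(y,R).
\]
Hence each $H_y$ lies in $A_\alpha$ of the ball of radius $R$ about $y$ computed inside the penumbra. Since $Y\subseteq\operatorname{Pen}_X(Y,R)$, we can set $H_x=0$ for $x\in\operatorname{Pen}_X(Y,R)\setminus Y$. This exhibits $\delta$ as an element of $\lgd_{\alpha,R}(\operatorname{Pen}_X(Y,R))$.

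\emph{Reverse inclusion.} Suppose $\delta=\sum_{x\in P}[H_x,\,\cdot\,]$, where $P=\operatorname{Pen}_X(Y,R)$ and $H_x\in A_\alpha(B_P(x,S))$.
\begin{enumerate}[(1)]
\item For each $x\in P$, choose $y(x)\in Y$ with $d(x,y(x))\le R$.
\item By the triangle inequality, $B_P(x,S)\subseteq B_X\bigl(y(x),R+S+1\bigr)$. Here the $+1$ absorbs the mismatch between open balls and the non-strict inequality in the definition of $\operatorname{Pen}$.
\item For $y\in Y$, set
\[
H'_y=\sum_{x:\,y(x)=y}H_x\in A_\alpha\bigl(B_X(y,R+S+1)\bigr).
\]
\item Check that each $H'_y$ is a finite sum. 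The indices $x$ with $y(x)=y$ all lie in a ball of radius $R+1$ about $y$, which is finite because $X$ is proper.
\item Regrouping the (finite on each $T$) sum then gives $\delta=\sum_{y\in Y}[H'_y,\,\cdot\,]$, so $\delta\in\lgd_\alpha(X,Y)$.
\end{enumerate}

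The only point requiring real care is step (4): the finiteness of the regrouped sums, so that each $H'_y$ is a genuine element of the algebra. This is exactly where properness of $X$ enters. The remaining steps are bookkeeping of radii.
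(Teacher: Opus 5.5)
Your proof is correct, and it is the routine unwinding of the definitions that the paper leaves to the reader (the lemma is stated there without proof). One small point: the $+1$ in your step (2) is unnecessary, since $d(z,x)<S$ and $d(x,y(x))\le R$ already give $d(z,y(x))<R+S$, but including it does no harm.
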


\subsection{Excision isomorphism}

Continuing for a moment  with the subspace  $Y\subseteq X$,  there is a natural inclusion 
\begin{equation}
    \label{eq-functoriality-of-lgd}
\lgd_\alpha(Y) \longrightarrow \lgd_\alpha (X)
\end{equation}
that may be characterized as follows: the image $\overline \delta \in \lgd_\alpha (X)$ of a derivation $\delta \in \lgd_\alpha (Y)$ satisfies 
\[
\overline \delta (ST) = \delta(S)  T \qquad \forall S \in A_\alpha (Y),\,\,\,\forall T\in A_\alpha (X \setminus Y).
\]
This relation may be used to \emph{define} $\overline \delta $  since the images of 
$A_\alpha (Y)$ and $A_\alpha (X \setminus Y)$ in $A_\alpha (X)$ commute with one another, and the inclusions induce an  isomorphism of associative algebras 
\[
A_\alpha (Y) \otimes A_\alpha (X \setminus Y)\stackrel \cong \longrightarrow A_\alpha (X).
\]
We shall use this functoriality in what follows. 

\begin{definition}[Compare {\textup{\cite[Sec.\,1]{HigsonRoeYu93}}}]  
    Let $W$ be  a proper discrete metric space. A  decomposition $W = X \cup Z$  is  \emph{$\omega$-excisive}  if for every $R >0$ there exists some $S> 0$ such that 
    \[
    \operatorname{Pen}_W( X; R) \cap \operatorname{Pen}_W(Z; R)\subseteq \operatorname{Pen}_W (X \cap Z; S) .
    \]
\end{definition}

The following definition is taken from \cite{HigsonRoeYu93}, where an excision isomorphism is proved that involves the coarse $C^*$-algebras, or \emph{Roe  algebras}, of proper discrete metric spaces.  We shall prove a similar isomorphism for our Lie algebras of locally generated derivations.

\begin{theorem} 
    \label{thm-excision-isomorphism}
If $W = X \cup Z$ is an $\omega$-excisive decomposition of a proper discrete metric space, and if $\alpha \colon W \to \N$ is any function, then  the Lie algebra morphism
\begin{equation*}
\lgd_\alpha  (X)\big / \lgd_\alpha (X,X {\cap} Z) \stackrel \cong \longrightarrow \lgd_\alpha(W) \big / \lgd_\alpha(W,   Z)
\end{equation*}
that is induced from the inclusion of $\lgd_\alpha(X)$ into $\lgd_\alpha (W)$ is a Lie algebra  isomorphism.
\end{theorem}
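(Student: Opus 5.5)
We must show two things: the induced map is well defined and injective, and it is surjective. Well-definedness is immediate once we note that the inclusion \eqref{eq-functoriality-of-lgd} sends $\lgd_\alpha(X, X\cap Z)$ into $\lgd_\alpha(W,Z)$. Indeed, if $\delta = \sum_{y\in X\cap Z}[H_y,\,\cdot\,]$ with $H_y\in A_\alpha(B_X(y,R))\subseteq A_\alpha(B_W(y,R))$, then the same generators present $\overline\delta$ as a sum over $y\in Z$. Throughout we will work with generator families $(H_w)$ rather than derivations, using the elementary fact that a family supported in balls of radius $R$ determines a derivation. The only subtlety is that a derivation does not determine its generators uniquely; we handle this by always producing explicit families.

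For \textbf{surjectivity}, take $\delta=\sum_{w\in W}[H_w,\,\cdot\,]$ with $H_w\in A_\alpha(B_W(w,R))$ and split the sum according to $w\in Z$ or $w\in W\setminus Z\subseteq X$. The first part lies in $\lgd_\alpha(W,Z)$. Each $H_w$ with $w\in X\setminus Z$ lies in $A_\alpha(B_W(w,R))$, but this ball may leave $X$. We repair this as follows. Decompose $H_w$ using the tensor decomposition $A_\alpha(B)\cong A_\alpha(B\cap X)\otimes A_\alpha(B\setminus X)$, and expand along a basis of matrix units on the $B\setminus X$ factor containing the identity. The term with identity in the second factor lies in $A_\alpha(B_W(w,R)\cap X)$. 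This is not quite $A_\alpha(B_X(w,R))$ unless $X$ carries the restricted metric; since it does, the two sets coincide. The remaining terms involve points of $B_W(w,R)\setminus X\subseteq Z$.

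Here $\omega$-excisiveness enters. If $B_W(w,R)$ meets $Z$, then $w\in\operatorname{Pen}_W(X;R)\cap\operatorname{Pen}_W(Z;R)\subseteq \operatorname{Pen}_W(X\cap Z;S)$. So we can choose $z(w)\in X\cap Z\subseteq Z$ with $d(w,z(w))\le S$, and then $H_w\in A_\alpha(B_W(z(w),R+S))$. Reassigning every such $H_w$ to the index $z(w)$ gives an element of $\lgd_\alpha(W,Z)$; finitely many $w$ map to each $z$ by properness, so the regrouped sum is a legitimate generator family of radius $R+S$. Hence every $\delta$ equals (image of an element of $\lgd_\alpha(X)$, namely the terms from $w\in X\setminus Z$ whose balls avoid $Z$) plus an element of $\lgd_\alpha(W,Z)$. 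In fact we do not even need the matrix-unit expansion: we simply move entire $H_w$ whose $R$-ball meets $Z$.

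For \textbf{injectivity}, suppose $\delta\in\lgd_\alpha(X)$ with generators $K_x\in A_\alpha(B_X(x,R))$, and suppose $\overline\delta=\sum_{z\in Z}[G_z,\,\cdot\,]$ with $G_z\in A_\alpha(B_W(z,S'))$. We must show $\delta\in\lgd_\alpha(X,X\cap Z)$. The plan is as follows. First, evaluate $\delta$ on $T\in A_\alpha(X)$ and compare with the $G$-presentation restricted to $X$. Apply the conditional expectation $E\colon A_\alpha(W)\to A_\alpha(X)$ given by the normalized trace on the $W\setminus X$ factors. Since $\overline\delta$ preserves $A_\alpha(X)$ and $E$ is an $A_\alpha(X)$-bimodule map, we get $\delta(T)=\sum_z[E(G_z),T]$ with $E(G_z)\in A_\alpha(B_W(z,S')\cap X)$. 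Now the same excisiveness trick applies. Terms with $B_W(z,S')\cap X\ne\emptyset$ have $z\in\operatorname{Pen}(X;S')\cap\operatorname{Pen}(Z;S')$, so $z$ is within $S''$ of some point $y(z)\in X\cap Z$. Regrouping at $y(z)$ gives a presentation over $X\cap Z$ with radius $S'+S''$, and the terms with empty intersection are scalars and contribute nothing. This is the step I expect to require the most care: justifying that $E$ intertwines commutators, and that scalars drop out. Both follow from $[E(G),T]=E([G,T])$ for $T\in A_\alpha(X)$. Finally, the map is a Lie algebra morphism because it is induced by a Lie algebra inclusion.
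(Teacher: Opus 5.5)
Your argument is correct. It has the same skeleton as the paper's proof: the two identities $\lgd_\alpha(X)+\lgd_\alpha(W,Z)=\lgd_\alpha(W)$ and $\lgd_\alpha(X)\cap\lgd_\alpha(W,Z)=\lgd_\alpha(X,X\cap Z)$, the normalized-trace expectation $E_X$, and regrouping of generators. You organize the injectivity step differently, however.

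\textbf{Injectivity.} The paper first proves the support criterion of Lemma~\ref{lem-technical-result-for-excision}, by applying $E_V$ to the $W$-indexed generators. It then shows that $\delta$ annihilates both $A_\alpha(W\setminus X)$ and $A_\alpha(W\setminus\operatorname{Pen}_W(Z;S))$, and deduces $\delta\in\lgd_\alpha(X\cap\operatorname{Pen}_W(Z;S))$. Only after that does it use excisiveness.

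You instead apply $E_X$ directly to the $Z$-indexed generators $G_z$ of $\overline\delta$. Since $\delta$ is by definition a derivation of $A_\alpha(X)$, agreement on $A_\alpha(X)$ is enough, so nothing has to be checked on the complement. Excisiveness is then used once, to move the surviving terms $E_X(G_z)$, namely those with $z\in\operatorname{Pen}_W(X;S')$, to points of $X\cap Z$. Your route is shorter and avoids the auxiliary lemma and the sets $Y_1,Y_2$. The paper's route has the advantage of producing a reusable support lemma.

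\textbf{Surjectivity.} Here you invoke $\omega$-excisiveness when it is not needed. Any $w$ whose $R$-ball meets $Z$ can simply be reassigned to a point of $Z$ inside that ball, which is what the paper does via $\operatorname{Pen}_W(Z;R)$. So the sum identity holds for every decomposition $W=X\cup Z$, and excisiveness matters only for injectivity.
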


We shall prove Theorem~\ref{thm-excision-isomorphism} using the following two lemmas. 

\begin{lemma}
    If $X \subseteq W$, then there is a linear map 
    \[
    E_X \colon A_\alpha (W) \longrightarrow A_\alpha (X)
    \]
    such that 
    \[
    E_X (R\cdot S\cdot T) = R \cdot E_X(S)\cdot T \qquad \forall R,T \in A_\alpha (X)\,\,\, \forall S\in A_\alpha (W),
    \]
    and such that if $S\in A_\alpha (F)$ for some subset $F\subseteq W$, then $E_X(S)\in A_\alpha (X {\cap}F)$.
\end{lemma}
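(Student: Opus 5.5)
The map $E_X$ will be a conditional expectation built from normalized traces on the factors indexed by $W\setminus X$. For each $x\in W$ let $\tau_x\colon A_{\alpha(x)}\to\C$ be the normalized trace, $\tau_x(I_x)=1$. For a finite subset $F\subseteq W$, write
\[
A_\alpha(F)\cong A_\alpha(F\cap X)\otimes A_\alpha(F\setminus X)
\]
and define $E^F_X\colon A_\alpha(F)\to A_\alpha(F\cap X)\subseteq A_\alpha(X)$ by
\[
E^F_X = \mathrm{id}\otimes \bigotimes_{x\in F\setminus X}\tau_x .
\]
I will carry out the argument in three steps.

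\textbf{Step 1: compatibility and definition of $E_X$.} Check that these maps are compatible with the embeddings \eqref{eq-embed-a-f-prime-into-a-f-double-prime} for $F'\subseteq F''$. Under the embedding, the new tensor factors in $F''\setminus F'$ are identities. Those in $F''\cap X$ pass through $\mathrm{id}$ unchanged. Those in $F''\setminus X$ are sent by $\tau_x$ to $\tau_x(I_x)=1$. So $E^{F''}_X$ restricts to $E^{F'}_X$, and the maps assemble to a linear map $E_X\colon A_\alpha(W)\to A_\alpha(X)$ on the direct limit.

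\textbf{Step 2: the bimodule property.} Let $R,T\in A_\alpha(X)$ and $S\in A_\alpha(W)$. Choose a finite $F\subseteq W$ with $R,S,T\in A_\alpha(F)$; then in fact $R,T\in A_\alpha(F\cap X)$, so they have the form $R_0\otimes 1$ and $T_0\otimes 1$ in the splitting above. By linearity it suffices to take $S=S_1\otimes S_2$ an elementary tensor. Then
\[
RST = R_0S_1T_0\otimes S_2 ,
\]
and applying $\mathrm{id}\otimes\tau$ gives $R_0S_1T_0\,\tau(S_2) = R\,E_X(S)\,T$. Only multiplicativity of the tensor product and linearity of the trace are used here; the trace property itself is not even needed.

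\textbf{Step 3: the support property.} If $S\in A_\alpha(F)$, the definition gives $E_X(S)=E^F_X(S)\in A_\alpha(F\cap X)$ directly. If $F$ is infinite, $S$ still lies in some $A_\alpha(F_0)$ with $F_0\subseteq F$ finite, and the finite case applies.

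The only point needing care is that $E_X$ is well defined on the direct limit, which is the normalization $\tau_x(I_x)=1$ in Step 1. I do not expect any real obstacle beyond that.
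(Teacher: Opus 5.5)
Your proposal is correct and is essentially the paper's own proof. The paper defines $E_X=\mathrm{id}\otimes\Tr_Y$ via the splitting $A_\alpha(W)\cong A_\alpha(X)\otimes A_\alpha(Y)$, with $Y=W\setminus X$ and $\Tr_Y$ the normalized trace on $A_\alpha(Y)$, which is exactly your finite-stage construction assembled in one step; your Step 1 check that $\tau_x(I_x)=1$ makes the finite-stage maps compatible is the detail the paper leaves implicit.
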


\begin{proof} If $Y$ is the complement of $X$ in $W$ then, as we noted earlier, the inclusion morphisms determine an isomorphism
\[
A_\alpha (X) \otimes A_\alpha (Y)\stackrel \cong \longrightarrow A_\alpha (W).
\]
If $\Tr_Y\colon A_\alpha (Y)\to \C$ is the normalized trace, then the formula 
\[
E_X = \operatorname{id} \otimes \Tr_Y \colon A_\alpha (X) \otimes A_\alpha (Y)\longrightarrow A_\alpha (X)\otimes \C
\]
defines a morphism with the required properties.
\end{proof}

\begin{lemma}
\label{lem-technical-result-for-excision}
Let  $\delta \in \lgd_\alpha (W)$.  If $W$ is the disjoint union of  $V$ and $Y$, and if 
\begin{enumerate}[\rm (i)]

\item $\delta [A_\alpha (V)] \subseteq  A_\alpha (V)$,  and 

\item $\delta [A_\alpha (Y)] = 0$,   

\end{enumerate}
then $\delta \in \lgd_\alpha (V)$.
\end{lemma}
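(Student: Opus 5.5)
Write $\delta(T)=\sum_{x\in W}[H_x,T]$ with $H_x\in A_\alpha(B_W(x,R))$. The plan is to replace each $H_x$ by a localized element $H'_x\in A_\alpha(V\cap B_W(x,R))$, obtained with the conditional expectation $E_V$ of the preceding lemma, and then check that the new family still generates $\delta$. The natural choice is $H'_x=E_V(H_x)$. By the lemma, $H'_x\in A_\alpha(V\cap B_W(x,R))$, so $H'_x\in A_\alpha(B_V(v,2R))$ for any $v\in V$ with $d(x,v)<R$. If no such $v$ exists, then $B_W(x,R)\subseteq Y$ and $H'_x\in A_\alpha(\emptyset)=\C$; this term is central and can be discarded. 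Regrouping the remaining $H'_x$ by assigning each $x$ to a nearest point $v(x)\in V$ gives elements $K_v=\sum_{v(x)=v}H'_x$. This is a finite sum: properness of $W$ bounds the number of $x$ within distance $R$ of any $v$, and $x\in B_W(v,R)$ whenever $v(x)=v$. Each $K_v$ lies in $A_\alpha(B_V(v,2R))$. So it will suffice to show $\delta(S)=\sum_x[E_V(H_x),S]$ for all $S\in A_\alpha(V)$, together with the analogous statement on $A_\alpha(Y)$.

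For $S\in A_\alpha(V)$, take $F$ finite with $S\in A_\alpha(F)$. By (i) we have $\delta(S)\in A_\alpha(V)$, so $\delta(S)=E_V(\delta(S))$. By the bimodule property,
\[
E_V([H_x,S])=E_V(H_xS)-E_V(SH_x)=E_V(H_x)S-SE_V(H_x)=[E_V(H_x),S].
\]
Only finitely many $x$ contribute, so linearity gives $\delta(S)=\sum_x[E_V(H_x),S]$, where the right-hand side is again effectively a finite sum.

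For $T\in A_\alpha(Y)$, condition (ii) says $\delta(T)=0$, and $[E_V(H_x),T]=0$ because $E_V(H_x)\in A_\alpha(V)$ commutes with $A_\alpha(Y)$ by Lemma~\ref{lem-local-factors-for-disjoint-sets-commute}. Both derivations therefore agree on $A_\alpha(V)$ and on $A_\alpha(Y)$. These generate $A_\alpha(W)\cong A_\alpha(V)\otimes A_\alpha(Y)$, so the two derivations agree everywhere. Hence $\delta$ is the image, under the inclusion \eqref{eq-functoriality-of-lgd}, of the derivation of $A_\alpha(V)$ generated by the $K_v$, which lies in $\lgd_{\alpha,2R}(V)$.

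The main obstacle is the bookkeeping in the regrouping step. One has to ensure that points $x\in Y$ far from $V$ contribute only scalars, and that the regrouped sums are finite and supported in balls of $V$ of controlled radius; both follow from properness and the support property of $E_V$. A second point needs care: the identification of "$\delta\in\lgd_\alpha(V)$" with the condition $\overline{\delta'}(ST)=\delta'(S)T$ must be consistent with what was shown. It is, since the derivation generated by the $K_v$ kills $A_\alpha(Y)$ and so satisfies this product rule.
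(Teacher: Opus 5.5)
Your proposal is correct and follows essentially the same route as the paper: form $\varepsilon=\sum_x \ad_{E_V(H_x)}$, discard the scalar terms, regroup into $V$-indexed finite sums to get $\varepsilon\in\lgd_\alpha(V)$, and check $\varepsilon=\delta$ on the generating subalgebras $A_\alpha(V)$ and $A_\alpha(Y)$ using the bimodule property of $E_V$. Your nearest-point assignment is a concrete choice of the paper's partition into finite sets $F_x\subseteq B_W(x,R)$, and your explicit radius $2R$ states a bound the paper leaves implicit.
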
 

\begin{proof} 
There is some $R>0$ for which may write 
\[
\delta = \sum_{w\in W} \ad_{H_w}
\]
with $H_w \in A_\alpha (B_W(w,R))$ for all $w\in W$.   Consider now the derivation 
$\varepsilon \in A_\alpha (W)$ defined by 
\[
\varepsilon = \sum_{w\in W} \ad_{E_X(H_w)}
\]
If $B_W(w,R)\cap X = \emptyset$, then $E_X(H_w)\in A_\alpha (\emptyset)$, which is to say that $H_w$ is a multiple of the identity, and so $\ad_{H_w}$ can be dropped from the sum defining $\varepsilon$.  So  
\[
\varepsilon = \sum_{w\in \operatorname{Pen}_W(X;R)} \ad_{E_X(H_w)}
\]
If we partition $\operatorname{Pen}_W(X;R)$ into finite sets $F_x $ (indexed by $x\in X)$ such that $F_x\subseteq B_W(x,R)$ and define 
\[
L_x = \sum _{ w\in F_x} E_X(H_w) ,
\]
then we may write 
\[
\varepsilon = \sum_{x\in X} \ad_{L_x},
\]
which shows that in fact $\varepsilon$ is an element of $\lgd_\alpha (X)$. 

Now if $T\in A_\alpha (Y)$, then 
\begin{equation}
    \label{eq-e-equals-d-on-y}
\varepsilon (T) = 0 = \delta (T),
\end{equation}
where the second equality is from our hypotheses on $\delta$, and in addition if $T\in A_\alpha (X)$, then 
\begin{multline}
\label{eq-e-equals-d-on-x}
\varepsilon (T) = \sum_{w\in W} [ E_X(H_w),T] = \sum_{w\in W}E_X( [ H_w,T])
\\
=
E_X \Bigl ( \sum_{w\in W} [ H_w,T]\Bigr ) = E_X(\delta (T) = \delta (T),
\end{multline}
where the last equality is again from our hypotheses on $\delta$. Since $A_\alpha(X)$ and $A_\alpha (Y)$ together generate $A_\alpha (W)$, and since   $\delta$ and $\varepsilon $ are derivations, it follows from \eqref{eq-e-equals-d-on-y} and \eqref{eq-e-equals-d-on-x} that $\delta {=} \varepsilon$, which proves the lemma.
\end{proof}

\begin{proof}[Proof of Theorem~\ref{thm-excision-isomorphism}] 
The isomorphism in the statement of the theorem is equivalent to the two identities 
\begin{equation}
\label{eq-excision-condition-1}
    \lgd_\alpha(X) \cap \lgd_\alpha (W,Z) = \lgd_\alpha (X, X{\cap} Z)
\end{equation}
and
\begin{equation}
\label{eq-excision-condition-2}
    \lgd_\alpha(X) +  \lgd_\alpha (W,Z) = \lgd_\alpha (W).
\end{equation}
It is clear from the definitions that the left-hand side of  \eqref{eq-excision-condition-1} includes the right-hand side. We shall now prove the opposite inclusion.  Let $\delta \in \lgd_\alpha(X) \cap \lgd _\alpha(W,Z)$. Let $Y_1 = W \setminus X$.  Since $\delta \in \lgd_\alpha(X)$, 
\begin{equation}
\label{eq-y-1-condition}
\delta [ A_\alpha (X)]\subseteq A_\alpha (X)\quad \text{and}\quad \delta [ A_\alpha (Y_1)] = 0.
\end{equation}
Moreover, since $\delta \in \lgd_\alpha (W,Z)$, if $S\gg 0$ and if  $Y_2=  W\setminus \operatorname{Pen}_W (Z;S)$, then 
\begin{equation}
\label{eq-y-2-condition}
\delta [A_\alpha (\operatorname{Pen}_W (Z;S))] \subseteq A_\alpha (\operatorname{Pen}_W (Z;S))\quad \text{and} \quad  \delta [ A_\alpha (Y_2)] =0 .
\end{equation}
Now, since $\delta $ is a derivation, and since $A_\alpha(Y_1{\cup}Y_2)$ is generated by $A_\alpha(Y_1)$ and $A_\alpha (Y_2)$, it follows from \eqref{eq-y-1-condition} and \eqref{eq-y-2-condition} that 
\begin{equation}
\label{eq-vanishing-on-y-one-cup-y-two}
\delta\bigl [A_\alpha(Y_1{\cup}Y_2) \bigr] =0 .
\end{equation}
In addition,   the complement of $Y_1{\cup}Y_2$ in $W$ is  $X {\cap} \operatorname{Pen}_W (Z;S)$, and 
it follows from the identity 
\[
A_\alpha (X) \cap A_\alpha (\operatorname{Pen}_W (Z;S)) =
A_\alpha (X\cap \operatorname{Pen}_W (Z;S))
\]
together with \eqref{eq-y-1-condition} and \eqref{eq-y-2-condition} that 
\begin{equation}
\label{eq-complement-y-one-cup-y-two-is-invariant}
\delta \bigl [ A_\alpha \bigl (X{\cap} \operatorname{Pen}_W (Z;S))\bigr )\bigr ]
\subseteq A_\alpha \bigl (X{\cap} \operatorname{Pen}_W (Z;S)\bigr ).
\end{equation}
Using  Lemma~\ref{lem-technical-result-for-excision}, it now follows from  \eqref{eq-vanishing-on-y-one-cup-y-two} and \eqref{eq-complement-y-one-cup-y-two-is-invariant} that   
\[
\delta \in \lgd_\alpha (X{\cap} \operatorname{Pen}_W (Z;S)).
\]
But it follows from the $\omega$-excision property that 
\[
X {\cap} \operatorname{Pen}_W (Z;S) \subseteq \operatorname{Pen}_X (X {\cap}Z;S')
\]
for some sufficiently large $S'$, and therefore 
\[
\lgd_\alpha (X {\cap} \operatorname{Pen}_W (Z;S))
\subseteq \lgd_\alpha(\operatorname{Pen}_X (X {\cap}Z;S'))
\subseteq \lgd_\alpha (X, X{\cap}Z),
\]
and so $\delta \in \lgd_\alpha (X, X{\cap}Z)$. This proves \eqref{eq-excision-condition-1}.

As for \eqref{eq-excision-condition-2}, if $\delta\in A_\alpha (W)$ and 
\[
\delta = \sum_{w\in W} \ad_{H_w},
\]
with $H_w\in B_W(w,R)$ for all $w\in W$, and if we decompose $\delta$ as a sum  
\[
\delta = \sum_{x\in X \setminus \operatorname{Pen}_R(Z,R)} \ad_{H_x} + \sum_{w\in \operatorname{Pen}_R(Z,R)} \ad_{H_w},
\]
then the first sum is an element of $\lgd_\alpha (X)$ while the second is an element of $\lgd_\alpha (W,Z)$, as required. 
\end{proof}

\subsection{Lie subalgebra fixing a state}
\label{sec-subalgebra-fixing-a-state}
Define a linear functional $\varphi$ on $M_2 (\C)$ by 
\[
\varphi \colon \begin{bmatrix} a & b \\ c & d \end{bmatrix} \longmapsto a.
\]
From the point of view of $C^*$-algebra theory, $\varphi$ is a state (and in fact a pure state) and we shall use this language from now on.  However the  characteristic property of a state, that it is a \emph{positive} linear functional, will play no role.

From $\varphi$ we obtain linear functionals on all finite tensor products of copies of $M_2(\C)$ through the formula 
\[
T_1 \otimes \cdots \otimes T_n \longmapsto \varphi (T_1) \cdots  \varphi (T_n).
\]
These \emph{tensor product states} are compatible with identity-preserving inclusions \eqref{eq-embedding-of-matrix-algebras} of one finite tensor product of copies of $M_2(\C)$ into another, and so we obtain from them \emph{infinite tensor product states}
\[
A_\alpha(X)\longrightarrow \C
\quad \text{and}\quad 
A(X)\longrightarrow \C
\]
We shall use the same symbol, $\varphi$, for these, too.

\begin{definition} 
Let $X$ be a proper discrete metric space and let $\alpha \colon X \to \N$ be any function.  We shall write 
\[
\lgd^0_\alpha (X) = \bigl \{ \, 
\delta \in \lgd_\alpha (X) : \varphi (\delta (T))=0\,\,\,\forall T \in A_\alpha (X)\,\bigr \} .
\]
\end{definition} 

Each $\lgd^0_\alpha (X)$ is a Lie subalgebra of $\lgd _\alpha (X)$.  Moreover the inclusions 
\[
\iota_{\beta,\alpha}\colon \lgd_\alpha (X) \longrightarrow \lgd_\beta (X)
\]
in \eqref{eq-embedding-of-lgd-alpha-into-lgd-beta} carry $\lgd^0_\alpha (X)$ into $\lgd^0_\alpha (X)$, and so we may make the following definition: 

\begin{definition} 
Let $X$ be a proper discrete metric space.  We shall write 
\[
\lgd^0  (X) = \varinjlim _{\alpha} \lgd^0_\alpha (X), 
\]
where the direct limit is over the directed system of all $\alpha \colon X\to \N$.
\end{definition}

If $Y$ is a subspace of a proper discrete metric space $X$, then we may similarly define
\[
\lgd^0_\alpha  (X,Y) = \lgd^0_\alpha (X) \cap \lgd_\alpha (X,Y)
\]
and 
\[
\lgd^0  (X,Y) = \varinjlim _{\alpha} \lgd^0_\alpha (X,Y), 
\]
and the excision isomorphism in Theorem~\ref{thm-excision-isomorphism} carries over without change to this ideal: 
\begin{lemma} 
    \label{lem-excision-isomorphism-0}
If $W = X \cup Z$ is an $\omega$-excisive decomposition of a proper discrete metric space, then  the Lie algebra morphism
\begin{equation*}
\lgd^0 (X)\big / \lgd^0 (X,X {\cap} Z) \stackrel \cong \longrightarrow \lgd^0(W) \big / \lgd^0(W,   Z)
\end{equation*}
that is induced from the inclusion of $\lgd^0(X)$ into $\lgd^0 (W)$ is an    isomorphism. \qed
\end{lemma}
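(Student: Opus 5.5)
The plan is to prove the statement at each fixed level $\alpha$ and then pass to the direct limit over $\alpha$. Direct limits of vector spaces are exact, and the maps $\iota_{\beta,\alpha}$ are compatible with the inclusions $\lgd^0_\alpha(X)\to\lgd^0_\alpha(W)$ and with the ideals. So it suffices to establish the analogues of \eqref{eq-excision-condition-1} and \eqref{eq-excision-condition-2} with a superscript $0$ everywhere.

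\emph{Step 1: compatibility of $\varphi$ with the inclusion.} Under $A_\alpha(W)\cong A_\alpha(X)\otimes A_\alpha(W\setminus X)$ the state $\varphi$ factors as $\varphi\otimes\varphi$. The image $\overline\delta$ of $\delta\in\lgd_\alpha(X)$ satisfies $\overline\delta(ST)=\delta(S)T$ for $S\in A_\alpha(X)$ and $T\in A_\alpha(W\setminus X)$. Hence
\[
\varphi(\overline\delta(ST))=\varphi(\delta(S))\,\varphi(T),
\]
and this gives
\[
\lgd^0_\alpha(X)=\lgd_\alpha(X)\cap\lgd^0_\alpha(W).
\]

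\emph{Step 2: the intersection identity.} By Step 1 and \eqref{eq-excision-condition-1},
\[
\lgd^0_\alpha(X)\cap\lgd^0_\alpha(W,Z)=\lgd_\alpha(X)\cap\lgd_\alpha(W,Z)\cap\lgd^0_\alpha(W)=\lgd_\alpha(X,X{\cap}Z)\cap\lgd^0_\alpha(X),
\]
and the right-hand side is $\lgd^0_\alpha(X,X{\cap}Z)$ by definition. This is injectivity.

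\emph{Step 3: the sum identity (surjectivity), which I expect to be the main obstacle.} Given $\delta\in\lgd^0_\alpha(W)$, \eqref{eq-excision-condition-2} gives $\delta=\delta_1+\delta_2$ with $\delta_1\in\lgd_\alpha(X)$ and $\delta_2\in\lgd_\alpha(W,Z)$. Neither summand need lie in $\lgd^0$, so I would correct the generators locally. Write $P_B=\bigotimes_{y\in B}e_{11}$ for finite $B$, so that $\varphi(T)$ is the $(1,1)$ matrix entry of $P_F T P_F$ whenever $T\in A_\alpha(F)$. For a generator $H_w\in A_\alpha(B_W(w,R))$ with $B=B_W(w,R)$, split
\[
H_w=H_w'+K_w,\qquad H_w'=P_BH_wP_B+(1-P_B)H_w(1-P_B).
\]
Then $H_w'$ commutes with $P_B$, hence with the global projection defining $\varphi$, so $\varphi\circ\ad_{H_w'}=0$. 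Moreover the splitting does not change supports. Applying it separately to the generators of $\delta_1$ and of $\delta_2$ therefore produces $\delta_1'\in\lgd^0_\alpha(X)$ and $\delta_2'\in\lgd^0_\alpha(W,Z)$.

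The difference $\kappa=\delta-\delta_1'-\delta_2'=\sum_w\ad_{K_w}$ lies in $\lgd^0_\alpha(W)$ and has purely off-diagonal generators. The remaining task is to show that $\kappa$ itself lies in $\lgd^0_\alpha(X)+\lgd^0_\alpha(W,Z)$.

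\emph{Plan for the obstacle.} Write $K_w=P_BK_w(1-P_B)+(1-P_B)K_wP_B$. For the first piece, $\varphi([K,T])$ reduces to a functional of the form $T\mapsto\varphi(KT)$, and similarly for the second piece. So the condition $\varphi\circ\kappa=0$ becomes the pair of relations
\[
\sum_w PK_w=0=\sum_w K_wP,
\]
interpreted locally, since only finitely many terms matter against any given $T$. I would try to show that such a family of local terms can be reorganized into two pieces supported near $X$ and near $Z$ respectively, each separately satisfying these relations. The idea is to transfer the cancelling parts across the overlap region $X\cap\operatorname{Pen}_W(Z;S)$, which stays within bounded distance of $X\cap Z$ by $\omega$-excisiveness. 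This uses the same partition-into-finite-sets bookkeeping as in the proof of Lemma~\ref{lem-technical-result-for-excision}. If that direct reorganization fails, the fallback is to exploit the freedom of enlarging $\alpha$ to $\beta\ge\alpha$: inside $\lgd_\beta$ one has room to add compensating off-diagonal terms at points of $X\cap Z$, which is harmless after passing to the limit.
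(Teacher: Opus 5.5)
Your Steps 1 and 2 are correct and give injectivity. In Step 3, the reduction to the off-diagonal remainder $\kappa=\sum_w\ad_{K_w}$ is right, and so is recasting $\varphi\circ\kappa=0$ as the weak vanishing of $\sum_w PK_w$ and $\sum_w K_wP$. The gap is the claim $\kappa\in\lgd^0_\alpha(X)+\lgd^0_\alpha(W,Z)$. That claim is the whole content of surjectivity, and you only describe what you would try. (The paper records no argument here: it says Theorem~\ref{thm-excision-isomorphism} carries over without change. That is true of the intersection identity, but the sum identity needs an extra step, exactly where you located it.)

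As stated, ``transferring the cancelling parts across the overlap'' does not yet mean anything. The vectors $PK_w$ for different $w$ have overlapping supports, and a relation $\sum_w PK_w=0$ can involve many overlapping terms. Nothing in your plan says which part of the $X$-side sum cancels against the $Z$-side sum, or why that part is local. The fallback of enlarging $\alpha$ does not help: $\iota_{\beta,\alpha}$ preserves $\varphi$, so the defect to be cancelled is unchanged.

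The missing ingredient is a canonical local coordinate system for the defect. Let $\Omega_x$ be the unit vector with $\varphi=\langle\Omega_x,\,\cdot\,\Omega_x\rangle$ on $A_{\alpha(x)}$, and put $\mathcal{H}^\circ_F=\bigotimes_{x\in F}\Omega_x^\perp$. For finite $B$,
\[
\Omega_B^\perp=\bigoplus_{\emptyset\neq F\subseteq B}\mathcal{H}^\circ_F\otimes\Omega_{B\setminus F},
\]
compatibly with enlarging $B$.

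For $\delta=\sum_w\ad_{H_w}$ with $H_w\in A_\alpha(B_w)$, write $P_{B_w}H_w(1-P_{B_w})=|\Omega_{B_w}\rangle\langle u_w|$ and $(1-P_{B_w})H_wP_{B_w}=|v_w\rangle\langle\Omega_{B_w}|$. Let $a_F,b_F\in\mathcal{H}^\circ_F$ be the sums over $w$ of the $F$-components of $u_w$ and $v_w$; these sums are finite by properness. Test $\varphi\circ\delta$ against $T=|\xi\rangle\langle\Omega_G|$ and $T=|\Omega_G\rangle\langle\xi|$ with $\xi\perp\Omega_G$. This shows that $\varphi\circ\delta=0$ if and only if $a_F=b_F=0$ for every finite $F$.

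Now let $L_F\in A_\alpha(F)$ have zero diagonal blocks and corners $|\Omega_F\rangle\langle a_F|$ and $|b_F\rangle\langle\Omega_F|$, and set $\Pi(\delta)=\delta-\sum_F\ad_{L_F}$. Then $\Pi(\delta)$ has vanishing defect. Since $a_F,b_F\neq0$ only when $F\subseteq B_w$ for some $w$, $\Pi$ maps $\lgd_\alpha(X)$ into $\lgd^0_\alpha(X)$ and $\lgd_\alpha(W,Z)$ into $\lgd^0_\alpha(W,Z)$. To see this, assign each $F$ to a point of $F$ (respectively, of $Z$) at bounded distance; properness gives finitely many $F$ per point.

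Because $a_F$ and $b_F$ depend additively on the family of generators, the decomposition $\delta=\delta_1+\delta_2$ of $\delta\in\lgd^0_\alpha(W)$ from \eqref{eq-excision-condition-2} gives $\delta=\Pi(\delta_1)+\Pi(\delta_2)$. That is the sum identity. This makes your diagonal/off-diagonal splitting unnecessary, and it shows that $\omega$-excisiveness is used only for injectivity, not for surjectivity.
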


\section{Lie algebra homology}
\label{sec-lie-algebra-homology}

In this section we shall quickly review some basic information about Lie algebra homology, partly to establish notation, and partly to introduce some variations on standard constructions that we shall use later.

\subsection{Basic definitions}
We shall work throughout with Lie algebras $\mathfrak{g}$  over  the complex numbers.  Nearly always they will be infinite-dimensional. Denote by $\mathcal{U}(\mathfrak{g})$ the enveloping algebra of $\mathfrak{g}$.  The space  
\begin{equation}
    \label{eq-def-of-v-spaces}
V_p(\mathfrak{g}) = \mathcal{U}(\mathfrak{g})\otimes  \Lambda ^p (\mathfrak{g})
\end{equation}
(tensor product over $\C$, here and everywhere else, unless otherwise indicated) is a free left $\mathcal{U}(\mathfrak{g})$-module and the formula 
\begin{multline*}
b\colon T\otimes (X_1\wedge\cdots \wedge X_p)
\longmapsto \sum_i (-1)^{i+1} TX_i \otimes (X_1\wedge\cdots \wedge \hat X_i \wedge \cdots  \wedge X_p)
\\
+\sum _{i< j} (-1)^{i +j} T  \otimes ([X_i,X_j]\wedge X_1\wedge\cdots \wedge \hat X_i \wedge \cdots \wedge \hat X_j\wedge \cdots  \wedge X_p)
\end{multline*}
defines differentials 
\begin{equation}
    \label{eq-def-of-v-differentials}
b\colon V_p(\mathfrak{g}) \longrightarrow V_{p-1}(\mathfrak{g})
\end{equation}
giving  a resolution 
\[
\xymatrix{
0 & \C \ar[l] &  V_0(\mathfrak{g})\ar[l] &  V_1(\mathfrak{g})\ar[l]_b &  V_2(\mathfrak{g})\ar[l]_b &\cdots  \ar[l]_-b
}
\]
of the trivial $\mathfrak{g}$-module by free $\mathcal{U}(\mathfrak{g})$-modules. 
See \cite[Sec.~XIII.7]{CartanEilenberg56}.  If $M$ is a left $\mathfrak {g}$-module, then 
one may define the homology groups $H_p(\mathfrak{g}, M)$ as the homology of the complex 
\[
\xymatrix{
  [V_0(\mathfrak{g})\otimes _{\C} M] _{\mathfrak{g}}   &  [V_1(\mathfrak{g})\otimes _{\C} M] _{\mathfrak{g}} \ar[l]_{b\otimes 1} &  [V_2(\mathfrak{g})\otimes _{\C} M]_{\mathfrak{g}} \ar[l]_{b\otimes 1} & \cdots \ar[l]_-{b\otimes 1}
}
\]
of coinvariants. We are   interested in the case of the trivial module $M=\C$, in which case the above complex reduces to 
\begin{equation}
    \label{eq-complex-for-homology-with-trivial-coefficients}
\xymatrix{
  \Lambda^0(\mathfrak{g})   &  \Lambda^1(\mathfrak{g})  \ar[l]_{b'} &  \Lambda^2(\mathfrak{g})  \ar[l]_{b'} &\cdots  \ar[l]_-{b'}
}
\end{equation}
with differential 
\begin{multline}
    \label{eq-differential-for-homology-with-trivial-coefficients}
b' \colon X_1\wedge\cdots \wedge X_p
\\
\longmapsto \sum _{i< j} (-1)^{i +j}   [X_i,X_j]\wedge X_1\wedge\cdots \wedge \hat X_i \wedge \cdots \wedge \hat X_j\wedge \cdots  \wedge X_p .
\end{multline}

\subsection{Actions by derivations and  automorphisms}
\label{sec-derivations-and-automorphisms}
Let $\mathfrak{g}$ be a complex Lie algebra 
and let $\delta\colon \mathfrak{g}\to \mathfrak{g}$ be a derivation.  If we define
$\delta\colon \Lambda^p(\mathfrak{g}) \to \Lambda^p(\mathfrak{g})$ by means of the formula 
\begin{equation}
    \label{eq-action-of-differential-on-c-e-complex}
\delta \colon X_1\wedge\cdots \wedge X_p \longmapsto 
\sum _{i=1}^p  X_1\wedge\cdots\wedge \delta(Y_i)\wedge \cdots  \wedge X_p,
\end{equation}
then $\delta$ commutes with the differential $b'$ in \eqref{eq-differential-for-homology-with-trivial-coefficients}, and moreover the formula \eqref{eq-action-of-differential-on-c-e-complex} determines an action of the Lie algebra of all derivations of $\mathfrak{g}$ on the chain complex \eqref{eq-complex-for-homology-with-trivial-coefficients}, and on the each of the homology groups $H_p(\mathfrak{g},\C)$.

If  $Y\in \mathfrak{g}$,  if $\ad_Y\colon \mathfrak{g}\to \mathfrak{g}$ is the associated inner derivation, and if we define $\varepsilon_Y\colon \Lambda^p(\mathfrak{g}) \to  \Lambda^{p+1}(\mathfrak{g})$ by means of the formula
\[
\varepsilon_Y \colon  X_1\wedge\cdots \wedge X_p
\longmapsto  Y\wedge X_1\wedge\cdots \wedge X_p ,
\]
then 
\begin{equation}
\label{eq-cartan-formula}
    b' \varepsilon_Y + \varepsilon _Y b' = - \ad_Y.
\end{equation}
It follows that inner derivations act trivially on homology.

If  the derivation $\ad_Y\colon\mathfrak{g}\to \mathfrak{g}$ is \emph{locally nilpotent}, which means that for every $X\in \mathfrak{g}$ there is some $n$ such that $\ad_Y^n(X) =0$, then we can form the Lie algebra automorphism 
\[
\exp (\ad_Y) \colon \mathfrak{g}\longrightarrow \mathfrak{g}
\]
using the power series for $\exp (\ad_Y)$ (which becomes a finite polynomial when applied to any $X\in \mathfrak{g}$). 
The formula  \eqref{eq-cartan-formula} integrates to the formula
\begin{multline}
    \label{eq-chain-homotopy-for-inner-automorphisms}
b' \left ( \int_0^s \exp(t \ad_Y) \varepsilon _Y \, dt \right )  + 
 \left ( \int_0^s \exp(t \ad_Y) \varepsilon _Y \, dt\right )  b'
 \\
 = \mathrm{id} - \exp(s \ad_Y) \colon \Lambda^p(\mathfrak{g}) \to \Lambda^p(\mathfrak{g}) ,
\end{multline}
valid for all $s\ge 0$.
To be clear, the chain map $\exp(t \ad_Y)\colon \Lambda ^p\mathfrak{g} \to \Lambda^p\mathfrak{g}$ is defined by means of the formula   
\[
\exp(t \ad_Y)\colon X_0\wedge \cdots \wedge X_p 
\longmapsto \exp(t \ad_Y) (X_0)\wedge \cdots \wedge \exp(t \ad_Y) (X_p).
\]
Note that when applied to any element of $\Lambda^p \mathfrak{g}$, the integrands in \eqref{eq-chain-homotopy-for-inner-automorphisms}  are polynomial functions in $t$, and so the integrals have a simple meaning.
It follows from \eqref{eq-chain-homotopy-for-inner-automorphisms} that:

\begin{lemma} 
\label{lem-inner-automorphisms-from-locally-nilpotent-elements}
Let $\mathfrak{g}$ be a complex Lie algebra. If $Y$
is a locally nilpotent element of $\mathfrak{g}$, then the Lie algebra automorphism
\[
\exp (\ad_Y) \colon \mathfrak{g} \longrightarrow \mathfrak{g}
\]
induces the identity map on $H_*(\mathfrak{g} , \C)$. \qed
\end{lemma}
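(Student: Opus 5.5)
Setting $s=1$ in the chain homotopy formula \eqref{eq-chain-homotopy-for-inner-automorphisms} already gives the lemma; the work is in justifying that formula. On each chain group set
\[
h_s = \int_0^s \exp(t\ad_Y)\,\varepsilon_Y\,dt \colon \Lambda^p(\mathfrak{g})\longrightarrow \Lambda^{p+1}(\mathfrak{g}).
\]
Once we know that $b'h_s + h_s b' = \mathrm{id} - \exp(s\ad_Y)$ on every $\Lambda^p(\mathfrak{g})$, the chain map $\exp(\ad_Y)$ is chain homotopic to the identity. It therefore induces the identity on $H_*(\mathfrak{g},\C)$.

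First I will check that everything is well defined. Because $\ad_Y$ is locally nilpotent and $\exp(t\ad_Y)$ acts factorwise on $X_1\wedge\cdots\wedge X_p$, each factor $\exp(t\ad_Y)(X_i)$ is a finite sum $\sum_k t^k\ad_Y^k(X_i)/k!$. The wedge product is then a polynomial in $t$ with values in the finite-dimensional span of finitely many vectors. So the integral is an ordinary integral of a polynomial, and all the algebraic identities below may be checked coefficientwise. The same reasoning shows that $\exp(t\ad_Y)$ is a Lie algebra automorphism with inverse $\exp(-t\ad_Y)$. Since the action on $\Lambda^*(\mathfrak{g})$ is functorial in Lie algebra automorphisms, $\exp(t\ad_Y)$ commutes with $b'$ on chains. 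Moreover, its $t$-derivative on $\Lambda^p(\mathfrak{g})$ is $\ad_Y\circ\exp(t\ad_Y)$, where $\ad_Y$ acts as the derivation \eqref{eq-action-of-differential-on-c-e-complex}. This follows from the Leibniz rule applied to the product of polynomial factors.

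Next, using that $\exp(t\ad_Y)$ commutes with $b'$, we compute
\[
b'h_s + h_s b' = \int_0^s \exp(t\ad_Y)\,(b'\varepsilon_Y+\varepsilon_Y b')\,dt = -\int_0^s \exp(t\ad_Y)\,\ad_Y\,dt ,
\]
where the second equality uses the Cartan formula \eqref{eq-cartan-formula}. The integrand equals $\frac{d}{dt}\exp(t\ad_Y)$, so the right-hand side is $\mathrm{id}-\exp(s\ad_Y)$ by the fundamental theorem of calculus for polynomials.

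The main obstacle is justifying the derivative identity $\frac{d}{dt}\exp(t\ad_Y) = \ad_Y\exp(t\ad_Y)$ on $\Lambda^p(\mathfrak{g})$, together with the commutation of $\exp(t\ad_Y)$ with $b'$. Both reduce, factor by factor, to the corresponding identities on $\mathfrak{g}$ itself, where they are just power-series identities truncated by local nilpotence. The one point to be careful about is that $\exp(t\ad_Y)$ preserves brackets; I would prove this by the Leibniz expansion $\ad_Y^n[A,B]=\sum_k\binom nk[\ad_Y^kA,\ad_Y^{n-k}B]$.
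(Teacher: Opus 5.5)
Your proposal is correct and is the paper's own argument: the paper integrates the Cartan formula \eqref{eq-cartan-formula} to obtain the chain homotopy \eqref{eq-chain-homotopy-for-inner-automorphisms} and then takes $s=1$. You fill in the routine details the paper leaves implicit: polynomial dependence on $t$, that $\exp(t\ad_Y)$ is a Lie algebra automorphism commuting with $b'$ and with the derivation $\ad_Y$ on $\Lambda^p(\mathfrak{g})$, and the fundamental theorem of calculus.
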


\subsection{Coalgebra structure}
We continue to work with  a complex Lie algebra $\mathfrak{g}$, which may be infinite-dimensional.
The diagonal embedding 
\begin{equation}
    \label{eq-diagonal-morphism-of-lie-algebras}
\Delta \colon \mathfrak{g} \longrightarrow \mathfrak{g}\times \mathfrak{g}
\end{equation}
is a morphism of Lie algebras, and so it induces a morphism of complexes 
\[
V_* (\mathfrak{g}) \longrightarrow V_* (\mathfrak{g}\times \mathfrak{g}) ,
\]
with $(V_* (\,\,\cdot\,), b)$ as defined in \eqref{eq-def-of-v-spaces} and \eqref{eq-def-of-v-differentials}. But there is an obvious isomorphism 
\[
V_* (\mathfrak{g})\hotimes V_* (\mathfrak{g})\stackrel \cong \longrightarrow V_* (\mathfrak{g}\times \mathfrak{g}) ,
\]
leading (see \cite[Thm.\,V.10.1]{MacLane63}) to a K\"unneth  isomorphism
\[
H_*(\mathfrak{g}, \C) \hotimes H_*(\mathfrak{g}, \C) \stackrel \cong \longrightarrow H_* (\mathfrak{g}\times \mathfrak{g}, \C) ,
\]
 and so we obtain from \eqref{eq-diagonal-morphism-of-lie-algebras} a morphism of graded vector spaces 
\begin{equation}
    \label{eq-coproduct-on-homology}
\Delta \colon H_* (\mathfrak{g}, \C)\longrightarrow H_* (\mathfrak{g}, \C) \hotimes H_* (\mathfrak{g}, \C).
\end{equation}
The projection 
\begin{equation}
    \label{eq-counit-on-homology}
 \varepsilon \colon H_* (\mathfrak{g}, \C) \longrightarrow \C
\end{equation}
onto the degree zero part of homology is a counit for this coproduct, and we obtain a coalgebra structure on $H_*(\mathfrak{g}, \C)$.

\subsection{Algebra structure}
\label{subsec-algebra-structure}
It is sometimes possible to equip  $H_* (\mathfrak{g}, \C)$ with an algebra structure to go alongside its coalgebra structure. For this,  assume that there is given a Lie algebra morphism 
\begin{equation}
\label{eq-product-morphism}
\mu \colon \mathfrak{g}\times \mathfrak{g} \longrightarrow \mathfrak{g} ,
\end{equation}
that we shall call a \emph{product morphism}, and also a family of Lie algebra morphisms
\begin{equation}
\label{eq-correcting-morphism}
\gamma \colon \mathfrak{g} \longrightarrow \mathfrak{g},
\end{equation}
that we shall call \emph{correcting morphisms}, such that: 
\begin{enumerate}[label=\textup{(\theequation)}]

\refstepcounter{equation}
\item 
\label{eq-first-product-morphism-condition}
Every $\gamma$ induces the identity map on $H_* (\mathfrak{g}, \C)$.

\refstepcounter{equation}
\item If  $\iota_a$ and $\iota_2$ are the two coordinate embeddings of $\mathfrak{g}$ into $\mathfrak{g} \times \mathfrak{g}$, then the  composite Lie algebra morphisms
\[
\xymatrix{
\mathfrak{g} \ar[r]^-{\iota_1}  & \mathfrak{g} \times \mathfrak{g} \ar[r]^-{\mu} & \mathfrak{g}
}
\quad \text{and} \quad 
\xymatrix{
\mathfrak{g} \ar[r]^-{\iota_2}  & \mathfrak{g} \times \mathfrak{g} \ar[r]^-{\mu} & \mathfrak{g}
}
\]
induce the identity map in homology.

\refstepcounter{equation}
\item There is a correcting morphism $\gamma$ such that the diagram
\begin{equation*}
\xymatrix{ 
\mathfrak{g} \times \mathfrak{g}\ar[d]_{\mathrm{flip}} \ar[r]^-{\mu} & \mathfrak{g}\ar[d]^\gamma 
\\
\mathfrak{g} \times \mathfrak{g} \ar[r]_-{\mu} & \mathfrak{g}
}
\end{equation*}
is commutative.

\refstepcounter{equation}
\item 
\label{eq-last-product-morphism-condition}
There is a correcting morphism $\gamma$ such that the diagram 
\begin{equation*}
\xymatrix{ 
\mathfrak{g} \times \mathfrak{g}\times \mathfrak{g}\ar@{=}[d]  \ar[r]^-{\mu\times \mathrm{id}} & \mathfrak{g} \times \mathfrak{g}\ar[r]^-{\mu} & \mathfrak{g}\ar[d]^\gamma 
\\
\mathfrak{g} \times \mathfrak{g}\times \mathfrak{g} \ar[r]_-{\mathrm{id}\times \mu } & \mathfrak{g} \times \mathfrak{g} \ar[r]_-{\mu} & \mathfrak{g}
}
\end{equation*}
is commutative.

\end{enumerate}

\begin{lemma}
\label{lem-algebra-structure-on-homology}
Given \ref{eq-first-product-morphism-condition}-\ref{eq-last-product-morphism-condition} above, the product morphism $\mu$ defines an associative, commutative and  unital graded algebra structure on $H_* (\mathfrak{g},\C)$ \textup{(}for which the unit morphism is the inclusion of $\C$ into $H_* (\mathfrak{g},\C)$ as the degree zero part\textup{)}.
\end{lemma}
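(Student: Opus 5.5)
The product is defined as the composite
\[
H_*(\mathfrak{g},\C)\hotimes H_*(\mathfrak{g},\C)\xrightarrow{\ \cong\ } H_*(\mathfrak{g}\times\mathfrak{g},\C)\xrightarrow{\ \mu_*\ } H_*(\mathfrak{g},\C),
\]
where the first map is the K\"unneth isomorphism already used for the coproduct. On chains it is the map $\Lambda^p(\mathfrak{g})\otimes\Lambda^q(\mathfrak{g})\to\Lambda^{p+q}(\mathfrak{g}\times\mathfrak{g})$ given by $\omega\otimes\eta\mapsto \iota_1(\omega)\wedge\iota_2(\eta)$. Each property will be deduced from naturality of this K\"unneth map with respect to Lie algebra morphisms, together with the conditions \ref{eq-first-product-morphism-condition}--\ref{eq-last-product-morphism-condition}.

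\emph{Unit.} Let $1\in H_0(\mathfrak{g},\C)=\C$. Then $x\cdot 1$ is the image of $x$ under the map induced by $\mathfrak{g}\xrightarrow{\iota_1}\mathfrak{g}\times\mathfrak{g}\xrightarrow{\mu}\mathfrak{g}$. This holds because the K\"unneth map sends $x\otimes 1$ to $(\iota_1)_*x$, since on chains $\omega\otimes 1\mapsto \iota_1(\omega)$. By the second condition this composite induces the identity, so $x\cdot 1=x$. The argument for $1\cdot x$ is identical, using $\iota_2$.

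\emph{Graded commutativity.} The flip $\mathfrak{g}\times\mathfrak{g}\to\mathfrak{g}\times\mathfrak{g}$ acts on chains by $\iota_1(\omega)\wedge\iota_2(\eta)\mapsto \iota_2(\omega)\wedge\iota_1(\eta)=(-1)^{pq}\iota_1(\eta)\wedge\iota_2(\omega)$. So under K\"unneth it corresponds to the signed twist $x\otimes y\mapsto(-1)^{pq}y\otimes x$. Applying homology to the commutative square in the third condition gives $\gamma_*(x\cdot y)=(-1)^{pq}\,y\cdot x$. Since $\gamma_*=\mathrm{id}$ by \ref{eq-first-product-morphism-condition}, this is graded commutativity.

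\emph{Associativity.} On chains, the K\"unneth maps for three factors are compatible with $\mu\times\mathrm{id}$ and $\mathrm{id}\times\mu$. Concretely, $(\mu\times\mathrm{id})_*(\iota_1\omega\wedge\iota_2\eta\wedge\iota_3\zeta)=\iota_1(\mu_*(\iota_1\omega\wedge\iota_2\eta))\wedge\iota_2\zeta$, because $\mu\times\mathrm{id}$ is a Lie morphism and the induced chain map is multiplicative for wedge products. Hence the top row of the diagram in \ref{eq-last-product-morphism-condition} induces $(x\cdot y)\cdot z$ and the bottom row induces $x\cdot(y\cdot z)$. Commutativity of the diagram together with $\gamma_*=\mathrm{id}$ then gives associativity.

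The main obstacle is bookkeeping rather than substance. One must check that the K\"unneth isomorphism is natural for products of morphisms: for $f\colon\mathfrak{g}\to\mathfrak{h}$ and $f'\colon\mathfrak{g}'\to\mathfrak{h}'$, $(f\times f')_*$ corresponds to $f_*\otimes f'_*$. One must also check the Koszul sign for the flip. Both are verified directly on the Chevalley--Eilenberg chains $\Lambda^*(\mathfrak{g}\times\mathfrak{g})\cong\Lambda^*\mathfrak{g}\otimes\Lambda^*\mathfrak{g}$. Over $\C$, and even for infinite-dimensional $\mathfrak{g}$, the chain-level map $\Lambda^*\mathfrak{g}\otimes\Lambda^*\mathfrak{g}\to\Lambda^*(\mathfrak{g}\times\mathfrak{g})$ is an isomorphism of complexes, because the differential of a wedge of elements from the two commuting factors splits. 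So no subtlety about the completed tensor product arises in degree-wise homology.
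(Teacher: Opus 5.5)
Your proposal is correct and matches the argument the paper intends; the lemma is stated there without a written proof. The product is $\mu_*$ composed with the K\"unneth isomorphism, and the unit, graded commutativity and associativity follow from the conditions on $\mu\circ\iota_{1}$ and $\mu\circ\iota_{2}$, the flip square and the associativity square, together with $\gamma_*=\mathrm{id}$ for every correcting morphism. One small note: the paper's $\hotimes$ is the algebraic graded tensor product, not a completion, so your closing caveat is unnecessary but harmless.
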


\subsection{Hopf algebra structure}
\label{sec-hopf-algebra-structure-from-product-morphism}
We turn now to Hopf algebras. 

\begin{lemma}
\label{lem-hopf-algebra-structure-on-homology}
The coproduct morphism
\[
\Delta \colon H_* (\mathfrak{g}, \C) \longrightarrow H_* (\mathfrak{g}, \C)\hotimes H_* (\mathfrak{g}, \C)
\]
in \eqref{eq-coproduct-on-homology} is an algebra morphism for the algebra structure that is given in Lemma~\textup{\ref{lem-algebra-structure-on-homology}}.
\qed
\end{lemma}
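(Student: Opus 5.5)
The plan is to reduce the statement to naturality of the Künneth isomorphism, together with the fact that the diagonal of a product Lie algebra is compatible with products. The product on $H_*(\mathfrak{g},\C)$ from Lemma~\ref{lem-algebra-structure-on-homology} is the composite
\[
H_*(\mathfrak{g})\hotimes H_*(\mathfrak{g}) \cong H_*(\mathfrak{g}\times\mathfrak{g}) \xrightarrow{\ \mu_*\ } H_*(\mathfrak{g}).
\]
The coproduct is $\Delta_*$ followed by the inverse Künneth map. So the goal is to show that the following two maps $H_*(\mathfrak{g}\times\mathfrak{g})\to H_*(\mathfrak{g}\times\mathfrak{g})\hotimes H_*(\mathfrak{g}\times\mathfrak{g})$ agree, after identifying both sides via Künneth with fourfold tensor products of $H_*(\mathfrak{g})$:
\begin{itemize}
\item[(a)] $\Delta\circ\mu_*$;
\item[(b)] $(\mu_*\otimes\mu_*)\circ(1\otimes\tau\otimes 1)\circ(\Delta\otimes\Delta)$.
\end{itemize}
Here $\tau$ is the graded flip.

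First I record two facts at the level of Lie algebras. The first is $\Delta_{\mathfrak{g}}\circ\mu = (\mu\times\mu)\circ\Delta_{\mathfrak{g}\times\mathfrak{g}}$ as morphisms $\mathfrak{g}\times\mathfrak{g}\to\mathfrak{g}\times\mathfrak{g}$; this is the tautology $(\mu(a,b),\mu(a,b))=(\mu\times\mu)((a,b),(a,b))$. The second is that the diagonal of $\mathfrak{g}\times\mathfrak{g}$, viewed inside $\mathfrak{g}^4$, equals the composite of $\Delta\times\Delta$ with the middle coordinate swap $\mathfrak{g}^4\to\mathfrak{g}^4$.

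Applying the functor $H_*(\,\cdot\,,\C)$, which is functorial for Lie algebra morphisms, gives
\[
\Delta_*\mu_*=(\mu\times\mu)_*(\Delta_{\mathfrak{g}\times\mathfrak{g}})_*.
\]
It remains to translate each term through Künneth. I would check three points:
\begin{enumerate}
\item The Künneth map is natural for product morphisms $f\times g$, so that $(\mu\times\mu)_*$ corresponds to $\mu_*\otimes\mu_*$.
\item It is associative, so the iterated identifications of $H_*(\mathfrak{g}^4)$ agree.
\item It intertwines the coordinate swap with the graded flip $\tau$.
\end{enumerate}
All three hold at chain level, because the Künneth map is induced by the isomorphism $V_*(\mathfrak{g})\hotimes V_*(\mathfrak{g})\cong V_*(\mathfrak{g}\times\mathfrak{g})$. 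Passing to coinvariants, this becomes $\Lambda^*(\mathfrak{g})\otimes\Lambda^*(\mathfrak{h})\cong\Lambda^*(\mathfrak{g}\times\mathfrak{h})$, $\omega\otimes\eta\mapsto\omega\wedge\eta$. This map visibly commutes with maps induced by $f\times g$. Under it, swapping factors introduces exactly the Koszul sign $(-1)^{pq}$.

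Granting these, the diagonal of $\mathfrak{g}\times\mathfrak{g}$ induces $(1\otimes\tau\otimes1)(\Delta\otimes\Delta)$ on $H_*(\mathfrak{g})^{\hotimes 2}$. Composing with $\mu_*\otimes\mu_*$ gives precisely the statement that $\Delta$ is multiplicative for the product built from $\mu$. Compatibility with units and counits is immediate in degree zero.

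The main obstacle is the sign and identification bookkeeping in step 3. Behind it lies a concern about whether the Künneth isomorphism (completed tensor product, infinite-dimensional $\mathfrak{g}$) really is the exterior-product map on homology. I would handle this by working directly with the chain-level map $\omega\otimes\eta\mapsto\omega\wedge\eta$. It is a chain map because the brackets between the two factors vanish in $\mathfrak{g}\times\mathfrak{h}$, and it is a chain isomorphism over $\C$. Hence all naturality claims can be verified on chains, where they are formal. Notably, the correcting morphisms $\gamma$ play no role here; they are needed only for associativity and commutativity, which were handled in Lemma~\ref{lem-algebra-structure-on-homology}.
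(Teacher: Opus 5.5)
Your proposal is correct and is the routine argument the paper leaves to the reader; the lemma is stated with \qed and no proof. The Lie algebra identities $\Delta_{\mathfrak{g}}\circ\mu = (\mu\times\mu)\circ\Delta_{\mathfrak{g}\times\mathfrak{g}}$ and $\Delta_{\mathfrak{g}\times\mathfrak{g}} = (\text{middle swap})\circ(\Delta\times\Delta)$, combined with the naturality, associativity and Koszul-sign behaviour of the chain-level wedge map $\Lambda^*(\mathfrak{g})\otimes\Lambda^*(\mathfrak{g})\to\Lambda^*(\mathfrak{g}\times\mathfrak{g})$, give the bialgebra identity already on chains, with no role for the correcting morphisms. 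Your worry about completions is unnecessary: in this purely algebraic paper $\hotimes$ is just the graded algebraic tensor product, and your wedge map is exactly the Künneth map the paper obtains from $V_*(\mathfrak{g})\hotimes V_*(\mathfrak{g})\cong V_*(\mathfrak{g}\times\mathfrak{g})$ after passing to coinvariants.
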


\begin{definition}
Following   \cite[Def.~4.1]{MilnorMoore65}, we shall say that  a graded vector space $H_{*}=\oplus_{p=0}^\infty H_p$   is a \emph{connected, graded Hopf algebra} if it carries the structure of a graded algebra with unit morphism $\eta\colon \C \to H_0$, if $\eta$ is an isomorphism of vector spaces, and if $H_*$ also carries the structure of a graded coalgebra, with counit $\varepsilon\colon H_0 \to \C$ that is also an isomorphism of vector spaces, in such a way that the coproduct $\Delta $ is a morphism of algebras.
\end{definition}

\begin{example} 
Lemma~\ref{lem-hopf-algebra-structure-on-homology} gives $H_* (\mathfrak{g} , \C)$ the structure of a connected Hopf algebra.
\end{example}

In the following sections we shall use  several concepts and structural results about connected, graded, commutative and cocommutative  Hopf algebras, which we shall summarize here.

\begin{definition}
If $H_*$ is any connected, graded Hopf algebra, then for $p\ge 0$ the space of degree $p$ \emph{primitive elements} in $H_*$ is
\[
\Prim (H_p)  = \bigr \{ c\in H_p : \Delta c = 1\otimes c + c \otimes 1\,\bigr  \},
\]
(note that  $\Prim (H_0)=0$), while the space of degree $p$ \emph{indecomposable elements} is
\begin{multline*}
\Indecomp (H_p) 
\\
= H_p \big / \mathrm{span} \bigl \{ c_r \cdot c_s : c_r\in H_r,\,\, c_s\in H_s,\,\, p = r{+}s, \,\,\,\, r,s> 0 \,\bigr \}.
\end{multline*}
\end{definition}

\begin{theorem}[{\cite[Cor.~4.18]{MilnorMoore65}}] 
\label{thm-prim-iso-to-indecomp}
If $H_*$ is a connected, graded Hopf algebra with commutatative multiplication and commutative comultiplication, then the canonical morphism
\[
\Prim(H_*) \longrightarrow \Indecomp (H_*)
\]
\textup{(}the composition of the inclusion of   $\Prim(H_*)$ into  $H_*$ with the quotient map from $H_*$ to $ \Indecomp (H_*)$\textup{)} is an isomorphism.
\end{theorem}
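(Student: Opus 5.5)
The plan is to deduce the statement from the structure theory of connected graded Hopf algebras over a field of characteristic zero. The idea is to identify $H_*$ with the free graded-commutative algebra on its primitives; for such an algebra, both the primitives and the indecomposables can be read off directly. Throughout, write $I = \bigoplus_{p>0} H_p$ for the augmentation ideal, so that $\Indecomp(H_*) = I/I\cdot I$ in positive degrees.

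\emph{Step 1.} Since $H_*$ is connected, graded and cocommutative over $\C$, I will invoke the Milnor--Moore theorem \cite[Thm.~5.18]{MilnorMoore65}. It says that the inclusion $P=\Prim(H_*)\hookrightarrow H_*$ extends to an isomorphism of Hopf algebras
\[
\mathcal{U}(P)\stackrel{\cong}{\longrightarrow} H_*.
\]
Here $P$ carries the graded Lie bracket $[x,y]=xy-(-1)^{|x||y|}yx$ inherited from $H_*$. This theorem needs characteristic zero but no finiteness hypothesis on the $H_p$, which matters since our Lie algebras are infinite-dimensional.

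\emph{Step 2.} Because the multiplication on $H_*$ is graded commutative, the bracket on $P$ vanishes identically. Hence $\mathcal{U}(P)$ is the free graded-commutative algebra $S(P_{\mathrm{even}})\otimes\Lambda(P_{\mathrm{odd}})$. In this algebra every element of $P$ is primitive, and words of length $k$ span a subspace $S^k$, giving
\[
H_*\cong \bigoplus_{k\ge 0} S^k, \qquad S^1=P.
\]

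\emph{Step 3.} In this model, $I=\bigoplus_{k\ge1}S^k$ and $I\cdot I=\bigoplus_{k\ge2}S^k$, because the algebra is generated by $P=S^1$. Therefore the composite $P\to I\to I/I\cdot I$ is the identity of $S^1$, which is an isomorphism. Transporting back along the Hopf isomorphism of Step~1, which preserves both primitives and products, gives the claim degree by degree.

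The main obstacle is Step~1, the Milnor--Moore structure theorem itself; everything after it is bookkeeping. Should one want to avoid citing it, I would instead argue by induction on degree.

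\emph{Injectivity.} I would show $P\cap I^2=0$ using the symmetrization (Eulerian) idempotent. Concretely, $e=\log(\mathrm{id})$ is computed in the convolution algebra $\operatorname{End}(H_*)$, where $\log$ makes sense because $\mathrm{id}-\eta\varepsilon$ is locally nilpotent on the connected graded $H_*$. Commutativity and cocommutativity make $e$ a projection of $I$ onto $P$ that kills $I\cdot I$, and this gives both injectivity and surjectivity at once.

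\emph{Surjectivity.} I would show $I=P+I^2$ by modifying a lift $c$ of an indecomposable class by decomposables until its reduced coproduct vanishes.
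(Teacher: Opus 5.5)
Your main argument (Steps 1--3) is correct, but it differs from the paper's treatment. The paper gives no proof and simply quotes Milnor--Moore's Cor.~4.18. You instead derive the statement from the structure theorem, which the paper cites separately as Theorem~\ref{thm-milnor-moore}. Your Steps 1--2 just reproduce that theorem in the form $\operatorname{Sym}(\Prim H_*)\cong H_*$. Step 3 is then the formal observation that, for a graded-symmetric algebra on generators of positive degree, $I=\bigoplus_{k\ge1}S^k$ and $I\cdot I=\bigoplus_{k\ge2}S^k$.

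What your route buys is economy. Inside the paper, Theorem~\ref{thm-prim-iso-to-indecomp} becomes a formal consequence of Theorem~\ref{thm-milnor-moore}, so a single external structural input suffices, and, as you note, no finite-type hypothesis enters. The cost is that nothing becomes more elementary: all the content sits in the cited structure theorem.

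Your fallback via the Eulerian idempotent $e=\log^{\star}(\mathrm{id})$ would be a genuinely self-contained proof, but as stated it yields only injectivity. A projection of $I$ onto $P$ that kills $I\cdot I$ need not have kernel equal to $I\cdot I$. You also need $e(y)\equiv y \bmod I\cdot I$ for $y\in I$. This holds because $e-(\mathrm{id}-\eta\varepsilon)$ is a linear combination of convolution powers $(\mathrm{id}-\eta\varepsilon)^{\star n}$ with $n\ge 2$, and these take values in $I^n$. With that fact added, the vaguer ``modify a lift'' paragraph is unnecessary.

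Finally, you were right to work with $I/I\cdot I$ in positive degrees. Under the paper's literal definition, $\Indecomp(H_0)=H_0=\C$ while $\Prim(H_0)=0$, so the statement can only hold for $p>0$.
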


\begin{theorem}[{\cite[Thm.~5.18]{MilnorMoore65}}]
\label{thm-milnor-moore}
If $H_*$ is a connected, graded Hopf algebra with commutative multiplication and commutative comultiplication, then the canonical morphism
\[
\operatorname{Sym}(\Prim(H_*)) \longrightarrow H_*
\]
from the symmetric algebra of $\Prim (H_*)$ into $H_*$ is an isomorphism of Hopf algebras.
\end{theorem}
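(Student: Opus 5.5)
We prove this by showing the canonical map is a morphism of Hopf algebras, then proving surjectivity by induction on degree with Theorem~\ref{thm-prim-iso-to-indecomp}, and injectivity by a lowest-degree argument on primitives. Throughout, "commutative" means graded-commutative, and $\operatorname{Sym}(\Prim(H_*))$ means the graded-symmetric algebra: polynomial on even generators, exterior on odd ones.

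\emph{Step 1: the map is a Hopf morphism.} Write $P = \Prim(H_*)$. Give $\operatorname{Sym}(P)$ its standard Hopf structure, in which elements of $P$ are primitive. Since $H_*$ is graded-commutative, the inclusion $P \to H_*$ extends uniquely to an algebra morphism $\Phi\colon \operatorname{Sym}(P) \to H_*$. The maps $\Delta\circ\Phi$ and $(\Phi\otimes\Phi)\circ\Delta$ are both algebra morphisms, using Lemma~\ref{lem-hopf-algebra-structure-on-homology} on the $H_*$ side. They agree on the generators $P$ because $\Phi$ preserves primitivity. Hence they agree everywhere, and $\Phi$ is a Hopf algebra morphism.

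\emph{Step 2: surjectivity.} We argue by induction on degree; in degree $0$ both sides are $\C$. Suppose $\Phi$ is onto in all degrees below $p$, and let $c \in H_p$. By Theorem~\ref{thm-prim-iso-to-indecomp} there is $x \in P_p$ with $c - x$ decomposable. That is, $c - x$ is a sum of products $c_r c_s$ with $r,s>0$ and $r+s=p$. Each factor lies in the image of $\Phi$ by the inductive hypothesis, and the image is a subalgebra. So $c$ lies in the image.

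\emph{Step 3: injectivity.} This step has two ingredients.
\begin{enumerate}[(a)]
\item A coalgebra morphism $f\colon C \to D$ of connected graded coalgebras that is injective on primitives is injective. Suppose not, and take a nonzero $x \in \ker f$ of minimal positive degree $p$. Its reduced coproduct $\bar\Delta x$ lies in $\bigoplus_{0<r,s<p} C_r\otimes C_s$. On this space $f\otimes f$ is injective by minimality. Since $(f\otimes f)(\bar\Delta x) = \bar\Delta f(x) = 0$, we get $\bar\Delta x = 0$. So $x$ is a nonzero primitive in $\ker f$, a contradiction.
\item $\Prim(\operatorname{Sym}(P)) = P$. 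This is where characteristic zero enters. The coproduct preserves the word-length grading $\operatorname{Sym}=\bigoplus_k \operatorname{Sym}^k$, so it suffices to treat a primitive $x \in \operatorname{Sym}^k(P)$ with $k \ge 2$. Check on monomials that $m\circ\bar\Delta$ acts on $\operatorname{Sym}^k(P)$ as multiplication by $2^k - 2$. Each monomial in $k$ generators splits into $2^k$ ordered subsets, the two trivial splittings are removed, and graded-commutativity makes every product return the original monomial with the correct sign. For primitive $x$ we have $\bar\Delta x = 0$, so $(2^k-2)x = 0$, forcing $x = 0$.
\end{enumerate}
Since $\Phi$ restricted to $\Prim(\operatorname{Sym}(P)) = P$ is the inclusion, (a) gives injectivity of $\Phi$.

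The main obstacle is Step 3(b), and specifically the sign bookkeeping in the identity $m\circ\bar\Delta = (2^k-2)\,\mathrm{id}$ on $\operatorname{Sym}^k$ for odd generators. If this becomes messy, the fallback is to compute with explicit monomial bases: polynomial in even generators, exterior in odd ones. In that basis one reads off directly that the only primitives are the linear ones.
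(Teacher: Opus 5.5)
Your argument is correct. The paper gives no proof of this statement; it simply imports it from Milnor--Moore. Your outline is essentially the standard Milnor--Moore argument: surjectivity from primitive generation, and injectivity because a morphism of connected coalgebras that is injective on primitives is injective, together with $\Prim(\operatorname{Sym}(P))=P$ in characteristic zero. For that last step, the general noncommutative version goes through the Poincar\'e--Birkhoff--Witt theorem for $\mathcal{U}(P)$, and your identity $m\circ\bar\Delta=(2^k-2)\,\mathrm{id}$ on $\operatorname{Sym}^k(P)$ is a tidy shortcut for the abelian case.

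The sign worry you flag disappears once you note that $m\circ\Delta=\mathrm{id}\star\mathrm{id}$ is an algebra endomorphism of any graded-commutative bialgebra. It equals $2\,\mathrm{id}$ on $P$, and hence $2^k$ on $\operatorname{Sym}^k(P)$.

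What your route buys is transparency about where the weight lies. The surjective half of Theorem~\ref{thm-prim-iso-to-indecomp} is exactly the assertion that $H_*$ is generated by its primitives. So you have reduced Theorem~\ref{thm-milnor-moore} to that half. Conversely, Theorem~\ref{thm-milnor-moore} immediately recovers Theorem~\ref{thm-prim-iso-to-indecomp}, since the indecomposables of $\operatorname{Sym}(P)$ are $P$. The reduction is legitimate here because both results are cited black boxes in the paper.

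If you want a proof independent of Theorem~\ref{thm-prim-iso-to-indecomp}, you can get primitive generation directly:
\begin{itemize}
\item Connectedness makes $e=\log^{\star}(\mathrm{id})$ a finite sum on each $H_p$.
\item Cocommutativity forces $e$ to take values in $\Prim(H_*)$.
\item Then $\mathrm{id}=\exp^{\star}(e)=\sum_k e^{\star k}/k!$ writes every element as a sum of products of primitives.
\end{itemize}

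A minor point: in Step~1, multiplicativity of $\Delta$ is part of the Hopf algebra axioms in this abstract setting. Lemma~\ref{lem-hopf-algebra-structure-on-homology} is specific to Lie algebra homology and is not the right citation there.
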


\begin{remark} 
The symmetric algebra in the theorem is constructed in the graded sense (taking grading degree into account), so that for example if all the elements in the primitive part have odd grading-degree, then the symmetric algebra is in fact an exterior algebra.
\end{remark}

\subsection{Convolution of morphisms}
\label{sec-convolution-product}

Let $\mathfrak{g}$ be a complex Lie algbera. Assume that it is equipped with a product morphism
\[
\mu: 
\mathfrak{g} \times \mathfrak{g} \longrightarrow \mathfrak{g}
\]
and correcting morphisms, as in Section~\ref{subsec-algebra-structure}.

Given two Lie algebra morphisms
$\varphi,\psi\colon \mathfrak{g} \to \mathfrak{g}$,  define a third morphism, 
\begin{equation}
    \label{eq-convolution-product}
\theta = \varphi\star \psi \colon \mathfrak{g} \longrightarrow \mathfrak{g}
\end{equation}
 by means of the commutative diagram 
\begin{equation} 
\label{eq-diagram-defining-theta}
  \xymatrix{
\mathfrak{g} \times \mathfrak{g}  \ar[r]^{(\varphi, \psi)} & \mathfrak{g} \times \mathfrak{g} \ar[d]^{\mu}
\\
  \mathfrak{g}  \ar[u]^{\Delta} \ar[r]_{\theta}   & 
   \mathfrak{g}
  }
\end{equation}
where $\Delta$ is the diagonal homomorphism.  In the context of Hopf algebra theory, this is usually called the \emph{convolution product} of $\varphi$ and $\psi$.  

\begin{lemma}
\label{lem-convolution-of-morphisms}
If $\phi\star \psi $ is defined as in \eqref{eq-convolution-product} and \eqref{eq-diagram-defining-theta} above, then 
\begin{equation*}
(\phi\star \psi)_* =  \varphi_* {+} \psi_* 
\colon \Prim ( H_*(\mathfrak{g} , \C))\longrightarrow \Prim ( H_*(\mathfrak{g} , \C)).
\end{equation*}
\end{lemma}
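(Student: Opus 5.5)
Factor $\theta = \mu\circ(\varphi,\psi)\circ\Delta$ as in \eqref{eq-diagram-defining-theta} and compute each induced map on homology. Then restrict to primitives.

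First, $\Delta\colon\mathfrak{g}\to\mathfrak{g}\times\mathfrak{g}$ induces the coproduct \eqref{eq-coproduct-on-homology}, once we identify $H_*(\mathfrak{g}\times\mathfrak{g},\C)$ with $H_*(\mathfrak{g},\C)\hotimes H_*(\mathfrak{g},\C)$ via the Künneth isomorphism. So a primitive class $c$ maps to $1\otimes c + c\otimes 1$.

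Next, under the Künneth identification, $(\varphi,\psi)\colon \mathfrak{g}\times\mathfrak{g}\to\mathfrak{g}\times\mathfrak{g}$ induces $\varphi_*\otimes\psi_*$. This is naturality of the Künneth map, because $(\varphi,\psi)$ is the product of $\varphi$ on the first factor and $\psi$ on the second. Since $\varphi_*$ and $\psi_*$ are the identity in degree zero, $1\otimes c + c\otimes 1$ goes to $1\otimes\psi_*(c) + \varphi_*(c)\otimes 1$.

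Finally, $\mu_*$ on $H_*\hotimes H_*$ is by definition the product of Lemma~\ref{lem-algebra-structure-on-homology}. Here $\mu_*(1\otimes a)$ is the image of $a$ under $(\mu\circ\iota_2)_*$. This needs a small check: $1\otimes a$ in the Künneth picture is the image of $a$ under $(\iota_2)_*$, since the inclusion $\C\to H_0$ comes from the zero Lie algebra. Likewise $\mu_*(a\otimes 1)=(\mu\circ\iota_1)_*(a)$. By condition (ii) in the list \ref{eq-first-product-morphism-condition}--\ref{eq-last-product-morphism-condition}, both composites induce the identity on homology. Hence
\[
\theta_*(c) = \psi_*(c) + \varphi_*(c).
\]

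The main point requiring care is the middle step: verifying that the Künneth isomorphism is natural with respect to product morphisms of Lie algebras. I would check this at the chain level, using the Chevalley--Eilenberg complexes $\Lambda^*(\mathfrak{g}\times\mathfrak{g})\cong\Lambda^*\mathfrak{g}\otimes\Lambda^*\mathfrak{g}$, where the map $(\varphi,\psi)$ is visibly $\Lambda\varphi\otimes\Lambda\psi$. The identification of $\mu_*(1\otimes a)$ with $(\mu\iota_2)_*(a)$ follows the same way.

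I would also remark that $\theta_*(c)$ lands in $\Prim$. This holds because any Lie morphism induces a coalgebra map, since it commutes with the diagonal. Therefore the formula holds as maps $\Prim\to\Prim$.
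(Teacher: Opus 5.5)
Your proposal is correct and takes essentially the same route as the paper: factor $\theta=\mu\circ(\varphi,\psi)\circ\Delta$, send a primitive $c$ to $c\otimes 1+1\otimes c$, then to $\varphi_*(c)\otimes 1+1\otimes\psi_*(c)$, and multiply. The only difference is that you spell out the naturality of the K\"unneth isomorphism, and you justify $\mu_*(a\otimes 1)=a=\mu_*(1\otimes a)$ directly from the condition on $\mu\circ\iota_1$ and $\mu\circ\iota_2$, whereas the paper simply invokes that $1$ is the unit for the product of Lemma~\ref{lem-algebra-structure-on-homology}.
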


\begin{proof}
If $c \in \Prim ( H_*(\mathfrak{g} , \C))$, then the element 
$(\phi\star \psi)_*(c)$ is the image of the  composition
\[
c \mapsto c \otimes 1 + 1 \otimes c \mapsto \varphi_*(c) \otimes 1 + 1 \otimes 
\psi_*(c)  \mapsto \varphi_*(c) \cdot 1 + 1 \cdot \psi_*(c).
\]
But  $\varphi_*(c) \cdot 1 + 1 \cdot \psi_*(c) = \varphi_* (c) + \psi_* (c)$,
as required.
\end{proof}

\subsection{Abstract Eilenberg swindle argument}
\label{sec-abstract-eilenberg-swindle}

We shall continue to assume that  $\mathfrak{g}$ is equipped with a product morphism
\[
\mu: 
\mathfrak{g} \times \mathfrak{g} \longrightarrow \mathfrak{g}
\]
and correcting morphisms, as in Section~\ref{subsec-algebra-structure}.

Occasionally it is possible to construct an infinitary version of the convolution product in Section~\ref{sec-convolution-product}, with the following consequence:

\begin{lemma}
\label{lem-abstract-swindle}
If there is a Lie algebra morphism
\[
\mathrm{id}^\infty  \colon \mathfrak{g} \longrightarrow \mathfrak{g} 
\]
such that 
\[
(\mathrm{id} \star \mathrm{id}^\infty)_* = \mathrm{id}^{\infty}_*
\colon H_*(\mathfrak{g},\C) \longrightarrow  H_*(\mathfrak{g},\C),
\]
 then 
$H_p(\mathfrak{g}, \C) = 0$ for all $p> 0$.
\end{lemma}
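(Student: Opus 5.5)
The plan is to show first that every primitive element of $H_*(\mathfrak{g},\C)$ of positive degree vanishes, and then to conclude via Milnor--Moore. By Lemma~\ref{lem-algebra-structure-on-homology} and Lemma~\ref{lem-hopf-algebra-structure-on-homology}, $H_*(\mathfrak{g},\C)$ is a connected graded Hopf algebra. Its multiplication is commutative. Its comultiplication is cocommutative, because the diagonal $\Delta$ is invariant under the flip of $\mathfrak{g}\times\mathfrak{g}$ and the Künneth isomorphism intertwines the flip with the graded swap. Theorem~\ref{thm-milnor-moore} therefore identifies $H_*(\mathfrak{g},\C)$ with $\operatorname{Sym}(\Prim(H_*))$. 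If $\Prim(H_p)=0$ for every $p>0$, the symmetric algebra is just $\C$ in degree zero, and so $H_p(\mathfrak{g},\C)=0$ for all $p>0$.

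To prove that the primitive elements vanish, I apply Lemma~\ref{lem-convolution-of-morphisms} with $\varphi=\mathrm{id}$ and $\psi=\mathrm{id}^\infty$. Both are Lie algebra morphisms $\mathfrak{g}\to\mathfrak{g}$, so the convolution $\mathrm{id}\star\mathrm{id}^\infty$ is defined, and on $\Prim(H_*(\mathfrak{g},\C))$ we get
\[
(\mathrm{id}\star \mathrm{id}^\infty)_* = \mathrm{id}_* + \mathrm{id}^\infty_* .
\]
The map $\mathrm{id}^\infty_*$, being induced by a Lie algebra morphism, is a coalgebra map, so it preserves primitive elements; hence both sides are endomorphisms of $\Prim$. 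The hypothesis says $(\mathrm{id}\star\mathrm{id}^\infty)_*=\mathrm{id}^\infty_*$ on all of $H_*$, and in particular on $\Prim$. Subtracting gives $\mathrm{id}_*=0$ on $\Prim(H_*)$, so $\Prim(H_p)=0$ for every $p$. (Recall that $\Prim(H_0)=0$ anyway.)

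The main point needing care is that Lemma~\ref{lem-convolution-of-morphisms} really applies in this setting. Its proof uses three facts. The first is that a primitive class $c$ maps under the diagonal to $c\otimes1+1\otimes c$. The second is that $(\varphi,\psi)_*$ acts on the Künneth decomposition as $\varphi_*\otimes\psi_*$. The third is that $\mu_*$ restricted to $H_*\otimes 1$ and to $1\otimes H_*$ is the identity, which is condition (ii) of the product morphism axioms. All three hold here, so the only substantive input is the Milnor--Moore step. Alternatively, one can avoid that step by an induction on degree, using that the reduced coproduct of a nonzero element of minimal positive degree is zero, so such an element would be primitive. Either way the conclusion $H_p(\mathfrak{g},\C)=0$ for $p>0$ follows.
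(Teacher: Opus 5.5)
Your proposal is correct and is essentially the paper's proof: Lemma~\ref{lem-convolution-of-morphisms} with $\varphi=\mathrm{id}$ and $\psi=\mathrm{id}^\infty$ forces the identity map to vanish on $\Prim(H_*(\mathfrak{g},\C))$, and Theorem~\ref{thm-milnor-moore} then gives vanishing in all positive degrees. Your alternative closing step also works and avoids Milnor--Moore altogether: in a connected graded coalgebra, a nonzero class of minimal positive degree is automatically primitive.
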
 

\begin{remark} 
The notation $\mathrm{id}^\infty$ is meant to suggest an ``infinite convolution product''
\[
\mathrm{id}^\infty = \mathrm{id} \star \mathrm{id} \star \mathrm{id} \star \cdots .
\]
For such a product, if it existed, it would be natural to expect the formula in the statement of the lemma.
\end{remark}

\begin{proof}[Proof of Lemma~\ref{lem-abstract-swindle}]
Under the assumption that 
    \[
    (\mathrm{id}\star \mathrm{id}^\infty)_{*} = \mathrm{id}^{\infty}_* \colon H_*(\mathfrak{g}, \C) \longrightarrow H_*(\mathfrak{g}, \C) ,
    \]
it  follows from Lemma~\ref{lem-convolution-of-morphisms} that 
    \[
    \mathrm{id} + \mathrm{id}^{\infty}_* = \mathrm{id}^{\infty}_* \colon \Prim\bigl (  H_*(\mathfrak{g}, \C) \bigr ) \longrightarrow \Prim\bigl (  H_*(\mathfrak{g}, \C) \bigr ),
    \]
and therefore  the identity map is equal to the zero map on primitive elements.  In other words, the space of primitive elements in homology  is zero. It therefore follows Theorem~\ref{thm-milnor-moore} that  the homology of $\mathfrak{g}$ is zero in positive degrees.
\end{proof}

\section{The Hochschild-Serre spectral sequence}
\label{sec-hochschild-serre}

We shall quickly review the Hochschild-Serre spectral sequence in Lie algebra homology (see for instance \cite{MacLane63} for more information), and describe a Hopf algebra structure that may be placed upon it in certain situations.

\subsection{Preliminaries}
 Let $\mathfrak g $ be a complex Lie algebra and let $\mathfrak{h}$ be an ideal in $\mathfrak{g}$. In the case of the trivial $\mathfrak{g}$-module $\C$ (which is all that concerns us here),  the Hochschild-Serre spectral sequence is a first-quadrant spectral sequence 
\[
E^2_{pq} = H_p (\mathfrak{g}, H_q(\mathfrak{h},\C)) \,\,\Rightarrow \,\, H_{p{+}q} (\mathfrak{g}, \C) 
\]
that is constructed as follows. Recall the spaces $V_* (\mathfrak{g}) = \mathcal{U}(\mathfrak{g})\otimes \Lambda ^* (\mathfrak{g})$ that were introducted in Section~\ref{sec-lie-algebra-homology}, and define 
\begin{equation}
\label{eq-def-of-e-zero}
E^0_{pq} =   [ V_p(\mathfrak{g}/\mathfrak{h})\hotimes [V_q(\mathfrak{g})]_{\mathfrak{h}}  ]_{\mathfrak{g}/\mathfrak{h}}
\qquad (p\ge 0,\,\,\,q\ge 0)
\end{equation}
These spaces carry  two anti-commuting differentials 
\begin{equation}
\label{eq-def-of-e-zero-differentials}
b' = b\hotimes \mathrm{id} \colon E^0_{p,q} \to E^0_{{p-1},q} \quad \text{and} \quad b'' =  \mathrm{id}\hotimes b  \colon E^0_{p,q} \to E^0_{{p},{q-1}},
\end{equation}
and we arrive at a first-quadrant double complex
\begin{equation}
    \label{eq-first-quadrant-double-cplx}
\xymatrix{
\ar[d]_{b''} & \ar[d]_{b''} & \ar[d]_{b''} & \ar[d]_{b''} & 
\\
E^0_{0,3} \ar[d]_{b''} & E^0_{1,3}\ar[l]^{b'} \ar[d]_{b''}& E^0_{2,3}\ar[l]^{b'}\ar[d]_{b''} &
E^0_{3,3}\ar[l]^{b'}\ar[d]_{b''} &   \ar[l]^{b'}
\\
E^0_{0,2} \ar[d]_{b''} & E^0_{1,2}\ar[l]^{b'} \ar[d]_{b''}& E^0_{2,2}\ar[l]^{b'}\ar[d]_{b''} &
E_{3,2}\ar[l]^{b'}\ar[d]_{b''} &   \ar[l]^{b'}
\\
E^0_{0,1} \ar[d]_{b''} & E^0_{1,1}\ar[l]^{b'} \ar[d]_{b''}& E^0_{2,1}\ar[l]^{b'}\ar[d]_{b''} &
E^0_{3,1}\ar[l]^{b'}\ar[d]_{b''} &   \ar[l]^{b'}
\\
E^0_{0,0} & E^0_{1,0}\ar[l]^{b'} & E^0_{2,0}\ar[l]^{b'} &
E^0_{3,0}\ar[l]^{b'} &   \ar[l]^{b'}}
\end{equation}
Denote  by $H'$ the homology of the rows  (with respect to $b'$), and denote by $H''$ the homology of the columns (with respect to $b''$).  As usual, there are two spectral sequences that converge to the homology of the totalization of the double complex  \eqref{eq-first-quadrant-double-cplx}: the first  has $E^2_{pq}=H''\bigl (H' (E^0_{pq})\bigr )$, and the second  has $E^2_{pq}=H'\bigl (H'' (E^0_{pq})\bigr )$.  As for the first, we have:

\begin{lemma} 
\label{lem-first-spectral-sequence}
For $p,q\ge 0$ there are vector space isomorphisms
\[
\pushQED{\qed}
H''\bigl (H' (E^0_{pq})\bigr ) \cong 
\begin{cases} H_q (\mathfrak{g}, \C) & p =0 \\
0 & p> 0 . 
\end{cases}
\qedhere\popQED
\]
\end{lemma}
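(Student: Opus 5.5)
Writing $\mathfrak{q}=\mathfrak{g}/\mathfrak{h}$, the plan is to compute $H'$ first, row by row, and then pass to $H''$. For fixed $q$ the $q$-th row of \eqref{eq-first-quadrant-double-cplx} is obtained by applying the functor
\[
M \longmapsto [V_\bullet(\mathfrak{q})\hotimes M]_{\mathfrak{q}}
\]
to the $\mathfrak{q}$-module $M = [V_q(\mathfrak{g})]_{\mathfrak{h}}$. The module $M$ is a $\mathfrak{q}$-module because $\mathfrak{g}$ acts on $V_q(\mathfrak{g})$, $\mathfrak{h}$ is an ideal, and $\mathfrak{h}$ acts trivially on the $\mathfrak{h}$-coinvariants. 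Since $V_\bullet(\mathfrak{q})$ is a free resolution of $\C$ over $\mathcal{U}(\mathfrak{q})$, the homology of this row is $H_p(\mathfrak{q}, M)$. So the first step is to show that $M$ is free (or at least flat) as a $\mathcal{U}(\mathfrak{q})$-module.

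To do this, start from $V_q(\mathfrak{g})=\mathcal{U}(\mathfrak{g})\otimes\Lambda^q(\mathfrak{g})$, which is free as a left $\mathcal{U}(\mathfrak{g})$-module. Because $\mathfrak{h}$ is an ideal we have the isomorphism
\[
\mathbb{C}\otimes_{\mathcal{U}(\mathfrak{h})}\mathcal{U}(\mathfrak{g})\cong \mathcal{U}(\mathfrak{q})
\]
and the same holds on the other side. Taking $\mathfrak{h}$-coinvariants of $V_q(\mathfrak{g})$ then gives
\[
[V_q(\mathfrak{g})]_{\mathfrak{h}} \cong \mathcal{U}(\mathfrak{q})\otimes \Lambda^q(\mathfrak{g}),
\]
a free $\mathcal{U}(\mathfrak{q})$-module. (The coinvariants here are taken for the left action, i.e.\ $V_q/\mathfrak{h}V_q$.)

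With $M$ free, $H_p(\mathfrak{q},M)$ vanishes for $p>0$, and for $p=0$ it equals $M_{\mathfrak{q}}$. This computation is where the work lies. In particular one must check that the diagonal action used to form the coinvariants in \eqref{eq-def-of-e-zero} matches the module structure on $M$ used above. The standard trick handles this: for a free module $F$, the tensor product $V\otimes F$ with the diagonal action is again free, and $[V\otimes F]_{\mathfrak{q}}\cong V\otimes_{\mathcal{U}(\mathfrak{q})}F$ after twisting by the antipode.

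Finally,
\[
M_{\mathfrak{q}} = [[V_q(\mathfrak{g})]_{\mathfrak{h}}]_{\mathfrak{q}} = [V_q(\mathfrak{g})]_{\mathfrak{g}} \cong \Lambda^q(\mathfrak{g}).
\]
The differential induced by $b''$ on this column is the Chevalley–Eilenberg differential $b'$ of \eqref{eq-differential-for-homology-with-trivial-coefficients}. Its homology is $H_q(\mathfrak{g},\C)$, concentrated in $p=0$, which is the claim.
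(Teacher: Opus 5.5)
Your argument is correct, and it is the standard one that the paper leaves implicit (the lemma is stated there with no proof): the $q$-th row computes $H_p(\mathfrak g/\mathfrak h,[V_q(\mathfrak g)]_{\mathfrak h})$, the coefficient module $[V_q(\mathfrak g)]_{\mathfrak h}\cong\mathcal U(\mathfrak g/\mathfrak h)\otimes\Lambda^q(\mathfrak g)$ is free, so only the column $p=0$ survives, and that column is the Chevalley--Eilenberg complex $\Lambda^\bullet(\mathfrak g)$. The diagonal-action point you flag is exactly what Lemma~\ref{lem-homology-and-tp-by-projective-module} handles, applied with $P=[V_q(\mathfrak g)]_{\mathfrak h}$ and the complex $V_\bullet(\mathfrak g/\mathfrak h)$, whose homology is $\C$ in degree $0$ and zero elsewhere.
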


So the first spectral sequence collapses at the $E^2$-term, and as a result: 

\begin{theorem}
The homology of the totalization of the double complex \eqref{eq-first-quadrant-double-cplx} is  isomorphic to the homology of the Lie algebra $\mathfrak{g}$. \qed
\end{theorem}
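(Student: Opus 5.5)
The statement follows from the standard double-complex argument. By Lemma~\ref{lem-first-spectral-sequence}, the first spectral sequence of the double complex \eqref{eq-first-quadrant-double-cplx}, which is the one with $E^2_{pq}=H''(H'(E^0_{pq}))$, is concentrated in the column $p=0$, where $E^2_{0q}\cong H_q(\mathfrak{g},\C)$. I would first recall why this spectral sequence converges to the homology of the totalization. The double complex lives in the first quadrant, so in each total degree $n$ only the finitely many terms $E^0_{pq}$ with $p+q=n$ contribute. The filtration by columns (or by rows, for the other ordering) is therefore finite in each total degree, and both spectral sequences converge strongly to $H_*(\mathrm{Tot}\,E^0)$.

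Next I would check that the sequence collapses at $E^2$. For $r\ge 2$ the differential $d^r$ has bidegree $(-r,r-1)$. Every $d^r$ starting at column $0$ lands in a negative column, and every $d^r$ ending at column $0$ starts in column $r\ge 2$, where $E^2$ vanishes. Hence all higher differentials are zero and $E^\infty=E^2$. The associated graded of the filtration on $H_n(\mathrm{Tot})$ then has a single nonzero piece, $E^\infty_{0n}\cong H_n(\mathfrak{g},\C)$. We are working with vector spaces, and the filtration has only one nonzero graded piece, so there are no extension problems. This gives $H_n(\mathrm{Tot}\,E^0)\cong H_n(\mathfrak{g},\C)$.

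The only point that needs care is identifying the collapse isomorphism concretely, in case it is needed later for naturality or Hopf-algebra compatibility. This is an edge map. Under Lemma~\ref{lem-first-spectral-sequence}, the row $p=0$, $E^0_{0,*}=[\C\otimes[V_*(\mathfrak{g})]_{\mathfrak{h}}]_{\mathfrak{g}/\mathfrak{h}}=[V_*(\mathfrak{g})]_{\mathfrak{g}}=\Lambda^*(\mathfrak{g})$, is the Chevalley--Eilenberg complex \eqref{eq-complex-for-homology-with-trivial-coefficients}. Take the augmentation $V_*(\mathfrak{g}/\mathfrak{h})\to\C$, which sends $V_0=\mathcal{U}(\mathfrak{g}/\mathfrak{h})\to\C$. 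It induces a chain map $\mathrm{Tot}\,E^0\to(\Lambda^*\mathfrak{g},b')$, and by the argument above this map is a quasi-isomorphism. Since it comes from augmentations, the isomorphism is natural in the pair $(\mathfrak{g},\mathfrak{h})$.
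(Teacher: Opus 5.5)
Your proposal is correct and is essentially the paper's argument: Lemma~\ref{lem-first-spectral-sequence} concentrates the $E^2$-page of the first spectral sequence in the column $p=0$, so it collapses, and since the filtration is finite in each total degree you get $H_n(\mathrm{Tot})\cong H_n(\mathfrak{g},\C)$. There are two slips, neither affecting the conclusion. First, because this spectral sequence has $E^1=H'$, it comes from filtering by rows, so its $d^r$ has bidegree $(r{-}1,-r)$ rather than $(-r,r{-}1)$; collapse still follows, since each $d^r$ with $r\ge 2$ changes $p$ by $r-1\ge 1$. Second, in your last paragraph the column $E^0_{0,*}=[\mathcal{U}(\mathfrak{g}/\mathfrak{h})\otimes[V_*(\mathfrak{g})]_{\mathfrak{h}}]_{\mathfrak{g}/\mathfrak{h}}$ is not itself $\Lambda^*(\mathfrak{g})$; it is its quotient $H'_0$, the target of your augmentation map, that equals $\Lambda^*(\mathfrak{g})$.
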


\subsection{Construction of the spectral sequence}
We turn now to the second spectral sequence, for which, by definition 
\[
 E^1_{p,q}  = H'' (E_{p,q}) \quad \text{and} \quad 
  E^2_{p,q}  = H'(H'' (E_{p,q})) ,
 \]
and for which, as usual, in the $r$'th page of the spectral sequence, the differentials take the form
\begin{equation} 
\label{eq-r-th-page-differentials}
d^{(r)}\colon E^r_{p,q} \longrightarrow E^r _{p{-}r,q{+}r{-}1},
\end{equation}
which is to say that they have degree $({-}r,r{-}1)$.

\begin{lemma} 
\label{lem-projective-tensor-anything-is-projective}
Let $\mathfrak{g}$ be any   Lie algebra.  If $P$ is any projective   $\mathfrak{g}$-module, and $M$ is any   $\mathfrak{g}$-module, then the tensor product $P\otimes M$ \textup{(}over $\C$\textup{)} with the diagonal action of $\mathfrak{g}$ is also projective.  \qed
\end{lemma}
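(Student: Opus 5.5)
The plan is to reduce to the free case, using the fact that projectives are direct summands of free modules. Modules over the Lie algebra $\mathfrak{g}$ are the same as left modules over the enveloping algebra $\mathcal{U}(\mathfrak{g})$. Write $P\oplus Q\cong F$ with $F$ a free $\mathcal{U}(\mathfrak{g})$-module. The diagonal tensor product with $M$ is additive, so
\[
(P\otimes M)\oplus (Q\otimes M)\cong F\otimes M
\]
as $\mathfrak{g}$-modules. It therefore suffices to show that $F\otimes M$ is free. Since $F\cong \bigoplus_i \mathcal{U}(\mathfrak{g})$ and $\otimes$ commutes with direct sums, this further reduces to showing that $\mathcal{U}(\mathfrak{g})\otimes M$, with the diagonal action, is a free $\mathcal{U}(\mathfrak{g})$-module.

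For this I will use the standard untwisting isomorphism built from the Hopf algebra structure on $\mathcal{U}(\mathfrak{g})$. Let $M_{\mathrm{triv}}$ denote the underlying vector space of $M$. Then $\mathcal{U}(\mathfrak{g})\otimes M_{\mathrm{triv}}$, with $\mathfrak{g}$ acting only on the left factor, is free, with basis any vector space basis of $M$. Define
\[
\Phi\colon \mathcal{U}(\mathfrak{g})\otimes M_{\mathrm{triv}}\longrightarrow \mathcal{U}(\mathfrak{g})\otimes M,\qquad u\otimes m\longmapsto \sum u_{(1)}\otimes u_{(2)}m,
\]
using Sweedler notation for the coproduct $\Delta(X)=X\otimes 1+1\otimes X$. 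Its inverse is
\[
u\otimes m\longmapsto \sum u_{(1)}\otimes S(u_{(2)})m,
\]
where $S$ is the antipode. That these maps are inverse to each other is the usual Hopf algebra identity, using coassociativity and the antipode axiom. $\Phi$ intertwines left multiplication on the first factor with the diagonal action because $\Delta$ is an algebra morphism:
\[
\Phi(vu\otimes m)=\Delta(v)\cdot\Phi(u\otimes m).
\]
Hence $\mathcal{U}(\mathfrak{g})\otimes M$ with the diagonal action is free.

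The only step needing care is verifying that $\Phi$ is a module map and a bijection. Alternatively, one can avoid Hopf language and check directly on generators $X\in\mathfrak{g}$, using the PBW filtration: $\Phi$ is the identity modulo lower filtration degree, so it is bijective by induction on degree. Either way this is routine, and there is no genuine obstacle.
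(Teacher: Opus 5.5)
Your proof is correct: the reduction to a direct summand of $F\otimes M$ with $F$ free, and the untwisting isomorphism $\mathcal{U}(\mathfrak{g})\otimes M_{\mathrm{triv}}\cong\mathcal{U}(\mathfrak{g})\otimes M$, $u\otimes m\mapsto\sum u_{(1)}\otimes u_{(2)}m$, with inverse given via the antipode, both check out. The paper states this lemma without proof, treating it as standard, and your argument is exactly the standard justification it relies on; in fact it shows $P\otimes M$ is free whenever $P$ is.
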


\begin{lemma} 
\label{lem-homology-and-tp-by-projective-module}
Let $\mathfrak{g}$ be any \textup{(}complex\textup{)} Lie algebra, and let $P$ be a projective   $\mathfrak{g}$-module.
Let 
\[
M_0 \longleftarrow M_1 \longleftarrow \cdots \longleftarrow M_n
\]
be any complex of   $\mathfrak{g}$-modules, with homology groups $H_0(M),\dots, H_n(M)$.  The homology groups of the complex 
\[
[ P\otimes M_0 ]_{\mathfrak{g}} \longleftarrow [ P\otimes M_1 ]_{\mathfrak{g}} \longleftarrow \cdots \longleftarrow 
[ P\otimes M_n ]_{\mathfrak{g}}
\]
are isomorphic to $
[ P{\otimes} H_0(M) ]_{\mathfrak{g}}, \dots , [ P{\otimes} H_n(M) ]_{\mathfrak{g}}$ via the map that associates to any class $[p{\otimes} x]\in [ P{\otimes} H_r(M) ]_{\mathfrak{g}}$ the homology class of   $[p{\otimes} c]_{\mathfrak{g}} \in [ P{\otimes} M_r ]_{\mathfrak{g}}$, where $c\in M_r$ is any cycle representing $x\in H_r(M)$.  
\end{lemma}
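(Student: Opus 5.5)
The plan is to reduce the statement to the case of a single module using the fact that $[P\otimes -]_{\mathfrak{g}}$ is an exact functor when $P$ is projective. First I will note that for any $\mathfrak{g}$-module $N$ there is a natural isomorphism
\[
[P\otimes N]_{\mathfrak{g}} \cong P^{\mathrm{op}}\otimes_{\mathcal{U}(\mathfrak{g})} N .
\]
Here $P^{\mathrm{op}}$ is $P$ made into a right module via $p\cdot X = -Xp$. Indeed, coinvariants of the diagonal action are obtained by dividing out by the elements $Xp\otimes n + p\otimes Xn$, which are exactly the relations defining the balanced tensor product. The antipode $X\mapsto -X$ of $\mathcal{U}(\mathfrak{g})$ turns projective left modules into projective right modules, because it is an anti-isomorphism $\mathcal{U}(\mathfrak{g})\to\mathcal{U}(\mathfrak{g})^{\mathrm{op}}$ and carries free modules to free modules. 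So $P^{\mathrm{op}}$ is projective, hence flat, and the functor $N\mapsto [P\otimes N]_{\mathfrak{g}}$ is exact.

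Next I will use exactness to commute the functor with homology in the usual way. Write $Z_r=\ker(M_r\to M_{r-1})$, $B_r=\operatorname{im}(M_{r+1}\to M_r)$ and $H_r=Z_r/B_r$. Applying the exact functor to the short exact sequences $0\to Z_r\to M_r\to B_{r-1}\to 0$ and $0\to B_r\to Z_r\to H_r\to 0$ shows two things:
\begin{enumerate}[\rm (i)]
\item $[P\otimes Z_r]_{\mathfrak{g}}$ is the kernel of the induced differential, and $[P\otimes B_r]_{\mathfrak{g}}$ is its image, both embedded in $[P\otimes M_r]_{\mathfrak{g}}$;
\item the quotient of these is $[P\otimes H_r]_{\mathfrak{g}}$.
\end{enumerate}
For (i), note that the differential $M_r\to M_{r-1}$ factors as $M_r\twoheadrightarrow B_{r-1}\hookrightarrow M_{r-1}$, and exactness preserves both the surjection and the injection. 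At the ends of the complex one sets $B_n=0$ and $Z_0=M_0$.

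Finally I will check that the isomorphism is the one described in the statement. The composite $[P\otimes Z_r]_{\mathfrak{g}}\to[P\otimes H_r]_{\mathfrak{g}}$ sends $[p\otimes c]$ to $[p\otimes x]$ whenever $c$ is a cycle representing $x$. Inverting this composite therefore gives exactly the stated map, and well-definedness is automatic from the exactness argument above.

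The only step needing real care is the identification of coinvariants with a balanced tensor product over $\mathcal{U}(\mathfrak{g})$, together with the projectivity of $P^{\mathrm{op}}$. Once these are in place the argument is routine homological algebra. Lemma~\ref{lem-projective-tensor-anything-is-projective} is not needed here.
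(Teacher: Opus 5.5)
Your proof is correct. Its homological-algebra half is the same as the paper's: the sequences $0\to Z_r\to M_r\to B_{r-1}\to 0$ and $0\to B_r\to Z_r\to H_r\to 0$, with the differential factored through $B_{r-1}$.

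Where you differ is in how you show that $N\mapsto[P\otimes N]_{\mathfrak g}$ is exact. The paper derives this in one line from Lemma~\ref{lem-projective-tensor-anything-is-projective}. In effect, tensoring $0\to N'\to N\to N''\to 0$ with $P$ over $\C$ gives a sequence of $\mathfrak g$-modules whose last term $P\otimes N''$ is projective. That sequence therefore splits, and the additive functor of coinvariants preserves split sequences. You instead identify $[P\otimes N]_{\mathfrak g}$ with $P^{\mathrm{op}}\otimes_{\mathcal U(\mathfrak g)}N$ via the antipode, and use that projective modules are flat.

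Your route is self-contained: it bypasses Lemma~\ref{lem-projective-tensor-anything-is-projective}, whose proof the paper omits. The same argument also gives the statement for any flat $P$, since the antipode identifies left and right module categories. The paper's route stays inside the category of $\mathfrak g$-modules and needs no passage to right modules.

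One detail should be stated explicitly in your version. The balanced tensor product is defined by the relations $p\cdot u\otimes n=p\otimes un$ for all $u\in\mathcal U(\mathfrak g)$. These follow from the relations with $u\in\mathfrak g$ because $\mathfrak g$ generates $\mathcal U(\mathfrak g)$. With that noted, the identification with the diagonal coinvariants is exact as you claim.
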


 \begin{proof} 
It follows from Lemma~\ref{lem-projective-tensor-anything-is-projective}  that the functor 
\[
M \mapsto [P\otimes M]_{\mathfrak{g}}
\]
from $\mathfrak{g}$-modules to vector spaces preserves  exact sequences. Now let  $B_p\subseteq M_p$ and $C_p\subseteq M_p$ be the submodules of boundaries and cycles, respectively for the first complex in the statement of the lemma. Apply  exactness   to the short exact sequences
\[
 0 \longrightarrow C_p \longrightarrow M_p \longrightarrow B_{p-1}\longrightarrow 0 ,
\]
\[
 0 \longrightarrow B_p \longrightarrow M_p \longrightarrow M_p/B_{p}\longrightarrow 0
\]
and 
\[
 0 \longrightarrow B_p \longrightarrow C_p \longrightarrow H_p(M)\longrightarrow 0 
\]
to obtain the result.
\end{proof} 

\begin{corollary}
\label{cor-h-2-homology-of-e-zero}
Let $p,q\ge 0$. There is a unique  vector space isomorphism
\[
  [ V_p(\mathfrak{g}/\mathfrak{h})\hotimes H_q(\mathfrak{h}) ]_{\mathfrak{g}/\mathfrak{h}}
  \stackrel \cong \longrightarrow E^1_{p,q} 
\]
that associates to each class 
\[
[v\otimes x] \in   [ V_p(\mathfrak{g}/\mathfrak{h})\hotimes H_q(\mathfrak{h}) ]_{\mathfrak{g}/\mathfrak{h}} ,
\]
with $v\in V_p(\mathfrak{g}/\mathfrak{h})$ and $x\in H_q(\mathfrak{h}) $, the homology class of the $b''$-cycle 
\[
[v \otimes c]\in E^1_{pq},
\]
where $c\in V_q(\mathfrak{g})_\mathfrak{h}$ is any cycle representing $x$.    
\end{corollary}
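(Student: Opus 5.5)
The corollary should follow by applying Lemma~\ref{lem-homology-and-tp-by-projective-module} with $\mathfrak{g}/\mathfrak{h}$ in the role of the Lie algebra and $P=V_p(\mathfrak{g}/\mathfrak{h})$ in the role of the projective module. The complex of modules is $M_q=[V_q(\mathfrak{g})]_{\mathfrak{h}}$, with differential induced by $b$.

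\textbf{Step 1: the projective module.} $V_p(\mathfrak{g}/\mathfrak{h})=\mathcal{U}(\mathfrak{g}/\mathfrak{h})\otimes\Lambda^p(\mathfrak{g}/\mathfrak{h})$ is a free left $\mathcal{U}(\mathfrak{g}/\mathfrak{h})$-module, hence projective.

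\textbf{Step 2: the complex of modules.} Each $[V_q(\mathfrak{g})]_{\mathfrak{h}}$ is a $\mathfrak{g}/\mathfrak{h}$-module. Indeed, $\mathfrak{g}$ acts on $V_q(\mathfrak{g})$, and since $\mathfrak{h}$ is an ideal, the subspace $\mathfrak{h}\cdot V_q(\mathfrak{g})$ is $\mathfrak{g}$-stable. So $\mathfrak{g}$ acts on the coinvariants, and $\mathfrak{h}$ acts trivially there.

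The differential $b$ is $\mathcal{U}(\mathfrak{g})$-linear, so it descends to a complex of $\mathfrak{g}/\mathfrak{h}$-modules $M_q=[V_q(\mathfrak{g})]_{\mathfrak{h}}$.

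Because $V_*(\mathfrak{g})$ is a free $\mathcal{U}(\mathfrak{g})$-resolution of $\C$, it is also a free resolution over $\mathcal{U}(\mathfrak{h})$, by PBW. Therefore the homology of $M_*$ is $H_q(\mathfrak{h},\C)$, with its natural $\mathfrak{g}/\mathfrak{h}$-action.

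\textbf{Step 3: apply the lemma.} By definition $E^0_{p,q}=[P\otimes M_q]_{\mathfrak{g}/\mathfrak{h}}$ with differential $b''=\mathrm{id}\otimes b$. Lemma~\ref{lem-homology-and-tp-by-projective-module} therefore identifies $E^1_{p,q}=H''(E^0_{p,*})$ with $[V_p(\mathfrak{g}/\mathfrak{h})\otimes H_q(\mathfrak{h})]_{\mathfrak{g}/\mathfrak{h}}$. The lemma also describes the isomorphism explicitly: $[v\otimes x]$ is sent to the class of $[v\otimes c]$, where $c$ is any cycle representing $x$. This is exactly the map in the statement.

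\textbf{Step 4: uniqueness and well-definedness.} Uniqueness is immediate, since the classes $[v\otimes x]$ span the domain and their images are prescribed. Independence of the choice of cycle $c$ is part of the content of the lemma. Changing $c$ by a boundary $b(c')$ changes $v\otimes c$ by $\pm b''(v\otimes c')$.

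\textbf{Main obstacle.} The one point needing care is the bookkeeping of $\hotimes$ in the definition of $E^0$. If it denotes a graded tensor product, there are Koszul signs in $b''$, but these do not affect the homology computation.

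Lemma~\ref{lem-homology-and-tp-by-projective-module} is stated for a finite complex, while here $M_*$ is unbounded. Since homology in degree $q$ only involves $M_{q-1},M_q,M_{q+1}$, the lemma applies to the truncation $M_0\leftarrow\cdots\leftarrow M_{q+1}$. The one thing to check is that truncating does not change $H_q$, which is clear.
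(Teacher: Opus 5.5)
Your proposal is correct and matches the paper's argument: the paper's proof is the single line that the corollary is a special case of Lemma~\ref{lem-homology-and-tp-by-projective-module}, applied with $P=V_p(\mathfrak{g}/\mathfrak{h})$ and $M_*=[V_*(\mathfrak{g})]_{\mathfrak{h}}$ over $\mathfrak{g}/\mathfrak{h}$. The extra points you supply are details the paper leaves implicit: the $\mathfrak{g}/\mathfrak{h}$-module structure on the $\mathfrak{h}$-coinvariants, the PBW argument identifying the homology of $[V_*(\mathfrak{g})]_{\mathfrak{h}}$ with $H_*(\mathfrak{h},\C)$, and the truncation to a finite complex.
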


\begin{proof} 
This is a special case of Lemma~\ref{lem-homology-and-tp-by-projective-module}.
\end{proof} 

 We obtain from Corollary~\ref{cor-h-2-homology-of-e-zero} an isomorphism of complexes 
\begin{equation}
\label{eq-e-1-commutative-diagram}
\xymatrix{
[V_0(\mathfrak{g}/\mathfrak{h})\hotimes H_q(\mathfrak{h}) ]_{\mathfrak{g}/\mathfrak{h}} \ar[d]_{\cong} &
[V_1(\mathfrak{g}/\mathfrak{h})\hotimes H_q(\mathfrak{h}) ]_{\mathfrak{g}/\mathfrak{h}} \ar[l]_-{b'}\ar[d]_{\cong} &
 \cdots \ar[l]_-{b'}  
\\
E^1 _{0q}  & E^1 _{0q}\ar[l]^{b^1}   & \cdots \ar[l]^{b^1} ,
}
\end{equation}
from which we obtain  functorial isomorphisms
\begin{equation}
    \label{eq-e2-term-of-spectral-sequence}
 H_p\bigl (\mathfrak{g}/\mathfrak{h}, H_q(\mathfrak{h}) \bigr  ) 
 \stackrel \cong \longrightarrow E^2_{p,q} \qquad (p\ge 0,\,\, q\ge 0),
\end{equation}
in which  $ H_q(\mathfrak{h})$ is regarded as a $\mathfrak{g}/\mathfrak{h}$-module  using the action described in Section~\textup{\ref{sec-derivations-and-automorphisms}}.

\begin{lemma}
\label{lem-homology-with-coeffs-in-trivial-module}
If $\mathfrak{g} / \mathfrak{h}$ acts trivially on the homology groups $H_q(\mathfrak{h}, \C)$, then there is a natural isomorphism
\[
\pushQED{\qed}
 H_p  (\mathfrak{g}/\mathfrak{h})\hotimes  H_q(\mathfrak{h})  
 \stackrel \cong \longrightarrow H_p\bigl (\mathfrak{g}/\mathfrak{h}, H_q(\mathfrak{h}) \bigr  ).
 \qedhere\popQED
\]
\end{lemma}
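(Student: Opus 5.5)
The statement is the standard identification of homology with coefficients in a trivial module. The plan is to compute $H_p(\mathfrak{g}/\mathfrak{h}, H_q(\mathfrak{h}))$ with the resolution $V_*(\mathfrak{g}/\mathfrak{h})$ of Section~\ref{sec-lie-algebra-homology}, and to use the triviality of the action to pull the coefficient space out of the coinvariants. Write $\mathfrak{k}=\mathfrak{g}/\mathfrak{h}$ and $M=H_q(\mathfrak{h},\C)$; by hypothesis $\mathfrak{k}$ acts trivially on $M$. By definition, $H_p(\mathfrak{k},M)$ is the homology of the complex $[V_*(\mathfrak{k})\otimes M]_{\mathfrak{k}}$ with differential $b\otimes 1$, and $\mathfrak{k}$ acts diagonally.

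First step: since $\mathfrak{k}$ acts trivially on $M$, the diagonal action on $V_p(\mathfrak{k})\otimes M$ is just the action on the first factor, tensored with the identity of $M$. Taking coinvariants is a cokernel, namely the quotient by the span of the elements $Xv\otimes m$. Because $-\otimes M$ is exact over the field $\C$, it commutes with this quotient. This gives a natural isomorphism of complexes
\[
[V_*(\mathfrak{k})\otimes M]_{\mathfrak{k}}\cong [V_*(\mathfrak{k})]_{\mathfrak{k}}\otimes M ,
\]
and it intertwines $b\otimes 1$ on the two sides. The complex $[V_*(\mathfrak{k})]_{\mathfrak{k}}$ is the Chevalley--Eilenberg complex \eqref{eq-complex-for-homology-with-trivial-coefficients} for $\mathfrak{k}$.

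Second step: apply the algebraic K\"unneth formula over a field, in its simplest form. Here $M$ is a complex concentrated in a single degree with zero differential, so tensoring with $M$ is exact and commutes with taking homology. This yields
\[
H_p\bigl([V_*(\mathfrak{k})]_{\mathfrak{k}}\otimes M\bigr)\cong H_p(\mathfrak{k},\C)\otimes M .
\]
The map is $[z]\otimes m\mapsto [z\otimes m]$, which is visibly natural in $M$ and in morphisms of pairs $(\mathfrak{g},\mathfrak{h})$.

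The only point needing care is the meaning of $\hotimes$ in the statement. I will read it as the ordinary algebraic tensor product used in the K\"unneth isomorphism of Section~\ref{sec-lie-algebra-homology}, with no completion involved, so that both steps are purely algebraic. I do not expect a genuine obstacle. The one verification is that the identification in the first step matches the differentials, and this is immediate because the differential acts only on the $V_*(\mathfrak{k})$ factor.
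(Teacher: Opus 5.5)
Your proof is correct. The paper states this lemma without proof as a routine fact, and your argument is the standard verification it implicitly relies on: first, when $\mathfrak{g}/\mathfrak{h}$ acts trivially on $H_q(\mathfrak{h},\C)$, the coinvariants $[V_*(\mathfrak{g}/\mathfrak{h})\otimes H_q(\mathfrak{h})]_{\mathfrak{g}/\mathfrak{h}}$ become $\Lambda^*(\mathfrak{g}/\mathfrak{h})\otimes H_q(\mathfrak{h})$; second, since $-\otimes H_q(\mathfrak{h})$ is exact over $\C$, it commutes with taking homology. Your reading of $\hotimes$ as the ordinary tensor product in fixed bidegree $(p,q)$ is also the right one.
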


As a result, if $\mathfrak{g} / \mathfrak{h}$ acts trivially on the homology groups $H_q(\mathfrak{h}, \C)$, then there are   functorial isomorphisms
\begin{equation}
    \label{eq-e2-term-of-spectral-sequence-for-trivial-action-on-homology}
 H_p  (\mathfrak{g}/\mathfrak{h})\hotimes  H_q(\mathfrak{h})
 \stackrel \cong \longrightarrow E^2_{p,q} \qquad (p\ge 0,\,\, q\ge 0).
\end{equation}

\subsection{Hopf algebra structure on the Hochschild-Serre spectral sequence}
\label{sec-hopf-algebras-and-hochschild-serre-spectral-seq}

We  shall now revisit the assumptions that we made in Section~\ref{subsec-algebra-structure} in order to equip  $H_* (\mathfrak{g}, \C)$ a graded   algebra structure, and indeed the  structure of a connected Hopf algebra. We shall observe that a  strengthening of those   assumptions may be used to  equip all of the pages of the Hochschild-Serre spectral sequence with Hopf-algebra  structures.

To be clear, when  equipping a page $E^r _{*,*}$ with a coalgebra,  algebra, or Hopf algebra structure, we shall require that
\begin{enumerate}[\rm (i)]

\item all structure maps are bigrading-preserving, and 

\item the totalized spaces 
\[
H^r_s =  \bigoplus_{p+q=s} E^r _{p,q} \qquad (s=0,1,2,\dots)
\]
acquire from these the structure of a  graded coalgebra, algebra or Hopf algebra.

\end{enumerate}

 Since the coalgebra structure on homology is derived from the diagonal Lie algebra morphism $\mathfrak{g} \to \mathfrak{g} {\times}\mathfrak{g}$, and since the Hochschild-Serre spectral sequence is functorial, the following result is straightforward.

\begin{theorem}
\label{thm-coalgebra-structure-on-spectral-sequence}
Let $\mathfrak{g} $ be a complex Lie algebra and let $\mathfrak{h}$ be an ideal in $\mathfrak{g}$.
       There are cocommutative coalgebra structures on  the graded spaces $E^r_{*,*}$ for $r\ge 0$ such that: 
     \begin{enumerate}[\rm (i)]
         \item  The coalgebra structure on the graded space $ E^2_{*,* }$ corresponds to the   tensor product of the coalgebra structures on  $H_*(\mathfrak{g}/\mathfrak{h}, \C) $ and $ H_*(\mathfrak{h}, \C)$ under the isomorphism
         \[
         E^2_{*,* } =    H_*(\mathfrak{g}/\mathfrak{h}, \C)\hotimes H_*(\mathfrak{h}, \C) 
         \]
         from  \eqref{eq-e2-term-of-spectral-sequence-for-trivial-action-on-homology}.

        \item The differential $d^{(r)}$ is compatible with the comultiplication   on  $E^r_{*,*}$ in the sense that 
        \[
        \Delta d^{(r)}  c = d^{(r)}c_{(1)}\otimes c_{(2)} \pm   c_{(1)}\otimes d^{(r)}c_{(2)}\qquad (\text{Sweedler notation})
        \]
        for all \textup{(}homogeneous\textup{)} $c\in E^r_{*,*}$.

        \item The induced coalgebra structures on the edge groups 
         $E^r _{*,0}$ are compatible with one another in the sense that the diagrams 
         \[
          \xymatrix{
    E^{r+1}_{*, 0} \ar[r]^-\Delta \ar[d]_{\mathrm{incl}}& E^{r+1}_{*,0}\hotimes E^{r+1}_{*,0}  \ar[d]^{\mathrm{incl}}
    \\
    E^{r}_{*, 0} \ar[r]_-\Delta & E^{r}_{*,0}\hotimes E^{r}_{*,0}  
    }
         \]
         are commutative. \qed
\end{enumerate}
\end{theorem}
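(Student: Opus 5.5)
The plan is to realize the diagonal at the level of the double complex \eqref{eq-first-quadrant-double-cplx}, and then transport the resulting structure through the spectral sequence by functoriality. The diagonal $\Delta\colon\mathfrak g\to\mathfrak g\times\mathfrak g$ carries the ideal $\mathfrak h$ into the ideal $\mathfrak h\times\mathfrak h$. It therefore induces a morphism of the pairs $(\mathfrak g,\mathfrak h)\to(\mathfrak g\times\mathfrak g,\mathfrak h\times\mathfrak h)$, and hence a map of double complexes
\[
E^0_{*,*}(\mathfrak g,\mathfrak h)\longrightarrow E^0_{*,*}(\mathfrak g\times\mathfrak g,\mathfrak h\times\mathfrak h).
\]
This map preserves both differentials $b'$ and $b''$ and the bigrading. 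Next I will identify the target with $E^0(\mathfrak g,\mathfrak h)\hotimes E^0(\mathfrak g,\mathfrak h)$. This uses the isomorphisms $V_*(\mathfrak a\times\mathfrak b)\cong V_*(\mathfrak a)\hotimes V_*(\mathfrak b)$ for $\mathfrak a=\mathfrak g/\mathfrak h$ and for $\mathfrak a=\mathfrak g$, together with the observation that coinvariants for a product Lie algebra acting on an outer tensor product are the tensor product of the coinvariants. The bigrading is the total one: the $(p,q)$ piece is the sum over $p_1+p_2=p$ and $q_1+q_2=q$, with the usual Koszul signs. Cocommutativity at the chain level holds up to the flip, because $\mathrm{flip}\circ\Delta=\Delta$.

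The second step is to pass to the pages. A map of filtered complexes induces maps on every page $E^r$ that commute with the differentials $d^{(r)}$. Moreover, the spectral sequence of a tensor product of first-quadrant double complexes over a field is the tensor product of the spectral sequences: $E^r(C\hotimes D)\cong E^r(C)\hotimes E^r(D)$, with $d^{(r)}$ acting by the graded Leibniz rule. I will prove this Künneth statement by induction on $r$, using the algebraic Künneth theorem over $\C$ at each page, and checking that the filtration on the tensor product is the tensor product filtration. Composing the induced map with this identification gives the coproduct on $E^r$. Statement (ii) is then exactly the assertion that the coproduct is a chain map for $d^{(r)}$ into the tensor product complex. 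Coassociativity, the counit, and cocommutativity are inherited from the corresponding identities among $\Delta$, $(\Delta\times\mathrm{id})\Delta$, the projection to $0$, and the flip, since all of these are Lie algebra morphisms of pairs.

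For (i), I will check that the $E^2$ identification \eqref{eq-e2-term-of-spectral-sequence-for-trivial-action-on-homology} is natural for morphisms of pairs, and that it is compatible with the Künneth maps on both sides. Under it, the coproduct on $E^2$ becomes $\Delta_{\mathfrak g/\mathfrak h}\otimes\Delta_{\mathfrak h}$, followed by the middle interchange of tensor factors. This is just the functoriality in Corollary~\ref{cor-h-2-homology-of-e-zero} and Lemma~\ref{lem-homology-with-coeffs-in-trivial-module}, applied to $\Delta$ restricted to $\mathfrak g/\mathfrak h$ and to $\mathfrak h$.

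For (iii), I will use that the edge groups satisfy $E^{r+1}_{*,0}=\ker d^{(r)}\subseteq E^r_{*,0}$, because no differential lands in row $0$. The coproduct on $E^{r+1}$ is induced from the one on $E^r$ by restriction to cycles followed by passage to homology. On row $0$ the passage to homology is the identity, so the square commutes.

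The main obstacle is the multiplicative (Künneth) compatibility of spectral sequences in the second step. The delicate points are the signs in (ii) and the justification that $E^r$ of the tensor product double complex really is $E^r\otimes E^r$. This relies on working over a field and on the first-quadrant boundedness, which is what allows the inductive Künneth argument to go through page by page.
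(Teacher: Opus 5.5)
Your proposal is correct and takes the route the paper intends. The paper gives no written proof beyond the remark that the result follows from the functoriality of the Hochschild--Serre spectral sequence applied to the diagonal $\mathfrak g\to\mathfrak g\times\mathfrak g$, and your argument (the identification $E^r(\mathfrak g\times\mathfrak g,\mathfrak h\times\mathfrak h)\cong E^r(\mathfrak g,\mathfrak h)\hotimes E^r(\mathfrak g,\mathfrak h)$, proved page by page by K\"unneth over $\C$) supplies the details the paper leaves implicit.

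One small precision: ``no differential lands in row $0$'' holds only for $r\ge 2$, since $d^{(0)}$ and $d^{(1)}$ both hit row $0$. So (iii) should be read for $r\ge 2$, which is the only range the paper uses.
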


We turn now to the issue of putting an algebra structure on the spectral sequence.
We shall continue to work with a complex Lie algebra $\mathfrak{g}$ and an ideal $\mathfrak{h} \triangleleft \mathfrak{g}$.
We shall assume we are given a product morphism $\mu\colon \mathfrak{g} \times \mathfrak{g} \to \mathfrak{g}$ and correcting morphisms $\gamma \colon \mathfrak{g}\to \mathfrak{g}$, as in \eqref{eq-product-morphism} and \eqref{eq-correcting-morphism}, satisfying all of the conditions \ref{eq-first-product-morphism-condition}-\ref{eq-last-product-morphism-condition}, but  we shall also assume   that all $\mu$ and $\gamma$ restrict to the ideal $\mathfrak{h}$:
\begin{equation}
    \label{eq-restriction-condition}
\xymatrix{
\mathfrak{g}\times \mathfrak{g} \ar[r]^-\mu &  \mathfrak{g} \\
\mathfrak{h}\times \mathfrak{h} \ar[r]_-\mu \ar[u] &  \mathfrak{h}\ar[u]
}
\quad \text{and} \quad 
\xymatrix{
 \mathfrak{g} \ar[r]^-\gamma &  \mathfrak{g} \\
 \mathfrak{h} \ar[r]_-\gamma \ar[u] &  \mathfrak{h} .\ar[u]
}
\end{equation}
Furthermore, we shall suppose that all the conditions \ref{eq-first-product-morphism-condition}-\ref{eq-last-product-morphism-condition} hold for these restricted morphisms.  Finally, the restriction condition implies that all $\mu$ and $\gamma$ pass to Lie algebra morphisms for the quotient algebra $\mathfrak{g} / \mathfrak{h}$, 
\begin{equation}
    \label{eq-morphisms-on-quotients}
\xymatrix{
\mathfrak{g}/ \mathfrak{h}\times \mathfrak{g}/ \mathfrak{h} \ar[r]^-\mu &  \mathfrak{g} / \mathfrak{h} 
}
\quad \text{and} \quad 
\xymatrix{
 \mathfrak{g} / \mathfrak{h}\ar[r]^-\gamma &  \mathfrak{g} / \mathfrak{h} ,
}
\end{equation}
and we shall assume that \ref{eq-first-product-morphism-condition}-\ref{eq-last-product-morphism-condition}  hold for these induced Lie algebra morphisms, too.

\begin{definition} 
\label{def-admissible-pair}
We shall call any pair $(\mathfrak{g},\mathfrak{h})$ consisting of a Lie algebra and an ideal an \emph{admissible pair} if it is equipped with product morphisms and correcting morphisms satisfying all of the requirements that we have just listed.
\end{definition}

\begin{theorem}
\label{thm-algebra-structure-on-spectral-sequence}
       There are algebra structures on the graded spaces $E^r_{*,*}$ for $r\ge 2$ such that: 
     \begin{enumerate}[\rm (i)]
         \item The algebra structure on the graded space $ E^2_{*,* }$ corresponds to the   tensor product of the algebra structures on  $H_*(\mathfrak{g}/\mathfrak{h}, \C) $ and $ H_*(\mathfrak{h}, \C)$ under the isomorphism
         \[
         E^2_{*,* } =    H_*(\mathfrak{g}/\mathfrak{h}, \C)\hotimes H_*(\mathfrak{h}, \C) 
         \]
         from  \eqref{eq-e2-term-of-spectral-sequence-for-trivial-action-on-homology}.

        \item The differential $d^{(r)}$ is compatible with the multiplication on  $E^r_{*,*}$ in the sense that 
         \[
        d^{(r)} \mu (a\otimes b) = \mu (d^{(r)}a\otimes b) \pm \mu (a\otimes d^{(r)}b)
        \]
        for all \textup{(}homogeneous\textup{)} $a,b\in E^r_{*,*}$.

         \item The induced algebra structures on the edge spaces 
         $E^r _{0,*}$ are compatible with one another, as $r$ increases, in the sense that the diagrams 
         \[
    \xymatrix{
    E^{r}_{0,*}\hotimes E^{r}_{0,*} \ar[r]^-{\mu} \ar[d]_{\mathrm{quot}}& E^{r}_{0,*}  \ar[d]^{\mathrm{quot}}
    \\
    E^{r+1}_{0,* } \hotimes E^{r+1}_{0,*} \ar[r]_-{\mu} & E^{r+1}_{0,*},
    }
         \]
         are commutative.
     \end{enumerate}
\end{theorem}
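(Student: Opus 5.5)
The plan is to build the multiplicative structure at the level of the double complex $E^0_{*,*}$ in \eqref{eq-def-of-e-zero}, using functoriality of the Hochschild--Serre construction. The product morphism $\mu$ restricts to $\mathfrak{h}\times\mathfrak{h}\to\mathfrak{h}$ and descends to $\mathfrak{g}/\mathfrak{h}$ by \eqref{eq-restriction-condition} and \eqref{eq-morphisms-on-quotients}. So $\mu$ is a morphism of pairs $(\mathfrak{g}\times\mathfrak{g},\mathfrak{h}\times\mathfrak{h})\to(\mathfrak{g},\mathfrak{h})$ and induces a morphism of double complexes $E^0(\mathfrak{g}\times\mathfrak{g},\mathfrak{h}\times\mathfrak{h})\to E^0(\mathfrak{g},\mathfrak{h})$. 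It therefore induces a morphism of the whole spectral sequences, commuting with every $d^{(r)}$.

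Next I will identify the spectral sequence of the product pair with the tensor square of that of $(\mathfrak{g},\mathfrak{h})$. Using $V_*(\mathfrak{g}\times\mathfrak{g})\cong V_*(\mathfrak{g})\hotimes V_*(\mathfrak{g})$, and similarly for $\mathfrak{g}/\mathfrak{h}$, there is an isomorphism of double complexes
\[
E^0(\mathfrak{g}\times\mathfrak{g},\mathfrak{h}\times\mathfrak{h})\cong E^0(\mathfrak{g},\mathfrak{h})\hotimes E^0(\mathfrak{g},\mathfrak{h}),
\]
compatible with the bigrading and with the Koszul signs on the differentials $b'$ and $b''$. Coinvariants of a tensor product of modules over a product Lie algebra factor as a tensor product of coinvariants, which is what this needs. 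Working over the field $\C$, the Künneth theorem applied page by page gives a map, an isomorphism for $r\ge1$,
\[
E^r(\mathfrak{g},\mathfrak{h})\hotimes E^r(\mathfrak{g},\mathfrak{h})\to E^r(\mathfrak{g}\times\mathfrak{g},\mathfrak{h}\times\mathfrak{h}),
\]
under which $d^{(r)}$ corresponds to $d^{(r)}\otimes1\pm1\otimes d^{(r)}$. This is the standard fact that the spectral sequence of a tensor product of filtered complexes is the tensor product of the spectral sequences over a field. Composing with $\mu_*$ defines the multiplication on $E^r$ and gives (ii) at once.

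For (i), I will check at $r=2$ that, through \eqref{eq-e2-term-of-spectral-sequence-for-trivial-action-on-homology} and the Künneth isomorphisms on $H_*(\mathfrak{g}/\mathfrak{h})$ and $H_*(\mathfrak{h})$, the product becomes $(\mu_{\mathfrak{g}/\mathfrak{h}})_*\otimes(\mu_{\mathfrak{h}})_*$ with the usual sign interchange. This follows from naturality of Corollary~\ref{cor-h-2-homology-of-e-zero}. The trivial-action hypothesis for $\mathfrak{g}\times\mathfrak{g}$ acting on $H_*(\mathfrak{h}\times\mathfrak{h})$ is inherited from that of the factors. For (iii), the product on $E^{r+1}_{0,*}$ is by construction induced from the one on $E^r$ by passing to homology, and $E^{r+1}_{0,*}$ is a quotient of $E^r_{0,*}$ (no differentials enter column $0$). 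So the square commutes by naturality of the Künneth map with respect to passing to homology.

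Associativity, commutativity and unitality on each $E^r$ with $r\ge2$ are then transferred from $E^2$. On $E^2$ they hold because the correcting morphisms $\gamma$ act as the identity on $H_*(\mathfrak{g}/\mathfrak{h})$ and on $H_*(\mathfrak{h})$, hence on $E^2$ via (i), since $E^2$ is a tensor product of these and the map of spectral sequences induced by $\gamma$ is functorial. Thus the conditions \ref{eq-first-product-morphism-condition}--\ref{eq-last-product-morphism-condition} give the identities on $E^2$ exactly as in Lemma~\ref{lem-algebra-structure-on-homology}, and they pass to later pages because $\gamma_*$ on $E^{r+1}$ is induced from $\gamma_*$ on $E^r$. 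This is why the statement starts at $r\ge2$.

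The main obstacle is the sign bookkeeping and verifying that the Künneth identification of $E^0$ for the product pair really is an isomorphism of double complexes. Here I would write everything with $V_*(\mathfrak{g}/\mathfrak{h})\hotimes[V_*(\mathfrak{g})]_{\mathfrak{h}}$ explicitly and check the coinvariant factorization directly.
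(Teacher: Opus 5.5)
Your proposal is correct and follows the paper's route: functoriality of the Hochschild--Serre spectral sequence applied to $\mu$, with the correcting morphisms acting as the identity from $E^2$ onward (hence the restriction to $r\ge 2$). You also make explicit the external-product (Künneth) map $E^r(\mathfrak{g},\mathfrak{h})\hotimes E^r(\mathfrak{g},\mathfrak{h})\to E^r(\mathfrak{g}\times\mathfrak{g},\mathfrak{h}\times\mathfrak{h})$, which the paper leaves implicit in the word ``functoriality.'' One small slip: $E^{r+1}_{0,*}$ is a quotient of $E^r_{0,*}$ because no differentials \emph{leave} column $0$; they do enter it, from column $r$.
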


\begin{proof}
    We need to restrict to $r\ge 2$ because the correcting morphisms on $\mathfrak{g}$, $\mathfrak{h}$ and $\mathfrak{g}/\mathfrak{h}$ that appear in Definition~\ref{def-admissible-pair} do not necessarily act as the identity on the $r{=}0$ or $r{=}1$ pages.  But by definition, and by \eqref{eq-e2-term-of-spectral-sequence-for-trivial-action-on-homology}, they act as the identity on all of the $E^2_{pq}$, and by induction on all higher $E^r_{pq}$. The theorem now follows from  the functoriality of the spectral sequence.
\end{proof}

 Combining  Theorems~\ref{thm-coalgebra-structure-on-spectral-sequence} and \ref{thm-algebra-structure-on-spectral-sequence} we have: 

\begin{theorem}
\label{thm-hopf-structure-on-hochschld-serre-sequence}
Assume that $(\mathfrak{g},\mathfrak{h})$ is admissible pair in the sense of Definition~\textup{\ref{def-admissible-pair}}. 
The coalgebra and algebra structures on $E^r_{*,*}$ for $r\ge 2$ give, together, commutative and cocommutative connected graded  Hopf algebra structures on $E^r_{*,*}$.
\end{theorem}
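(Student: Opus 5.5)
The plan is to combine Theorems~\ref{thm-coalgebra-structure-on-spectral-sequence} and \ref{thm-algebra-structure-on-spectral-sequence}. What remains is to verify four things for each page $E^r_{*,*}$ with $r\ge 2$: the comultiplication is an algebra morphism; both structures are (graded) commutative; both are connected; and the structures are compatible as $r$ increases. I would argue by induction on $r$, starting at $r=2$.

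\emph{Base case $r=2$.} Use the identification \eqref{eq-e2-term-of-spectral-sequence-for-trivial-action-on-homology}, namely $E^2_{*,*}\cong H_*(\mathfrak{g}/\mathfrak{h},\C)\hotimes H_*(\mathfrak{h},\C)$. By part (i) of each of the two theorems, both the algebra and coalgebra structures on $E^2$ are the tensor products of the corresponding structures on the two factors. Each factor is a commutative, cocommutative, connected Hopf algebra:
\begin{enumerate}[\rm (a)]
\item by Lemma~\ref{lem-hopf-algebra-structure-on-homology} applied to $\mathfrak{h}$ and to $\mathfrak{g}/\mathfrak{h}$, with the restricted and induced product and correcting morphisms that admissibility (Definition~\ref{def-admissible-pair}) provides;
\item with commutativity coming from Lemma~\ref{lem-algebra-structure-on-homology};
\item with cocommutativity coming from the flip symmetry of the diagonal.
\end{enumerate}
A graded tensor product of such Hopf algebras, formed with the Koszul sign in the twist, is again one. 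Connectedness holds because $E^2_{0,0}=H_0\otimes H_0=\C$, and the unit and counit are the obvious maps.

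\emph{Inductive step.} Suppose $E^r$ is a commutative, cocommutative, connected Hopf algebra. By part (ii) of the two theorems, $d^{(r)}$ is both a derivation and a coderivation, with the appropriate signs. From this I would show three things.
\begin{enumerate}[\rm (a)]
\item Products of cycles are cycles, and a product of a boundary with a cycle is a boundary. Hence the multiplication descends to $E^{r+1}=H(E^r,d^{(r)})$.
\item The coproduct descends as well. For this one needs the Künneth identification $H(E^r\hotimes E^r)\cong E^{r+1}\hotimes E^{r+1}$, which holds over the field $\C$.
\item The induced structures are precisely the structures that Theorems~\ref{thm-coalgebra-structure-on-spectral-sequence} and \ref{thm-algebra-structure-on-spectral-sequence} assign to $E^{r+1}$.
\end{enumerate}
Point (c) holds because both sets of structures are obtained functorially from the same Lie algebra morphisms, namely $\Delta$, $\mu$ and the correcting morphisms. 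The Hopf compatibility $\Delta\circ\mu=(\mu\otimes\mu)\circ(1\otimes\tau\otimes1)\circ(\Delta\otimes\Delta)$ is an identity among chain maps, so it passes to homology. Commutativity and cocommutativity pass down in the same way.

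For connectedness of $E^{r+1}$, note that $E^r_{0,0}=\C$ can neither be hit by $d^{(r)}$ nor map nontrivially, because of the bidegree $(-r,r-1)$ and first-quadrant support. It therefore survives unchanged to $E^{r+1}_{0,0}$.

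The main obstacle is that Theorem~\ref{thm-algebra-structure-on-spectral-sequence} only yields multiplications on the pages, rather than chain-level algebra maps on $E^0$. The identity $\Delta\mu=(\mu\otimes\mu)(\Delta\otimes\Delta)$ therefore has to be checked through functoriality of the spectral sequence. I would do this by observing that both sides arise from the two composite Lie algebra morphisms $\mathfrak{g}\times\mathfrak{g}\to\mathfrak{g}\times\mathfrak{g}$ given by $\Delta\circ\mu$ and $(\mu\times\mu)\circ(\mathrm{id}\times\mathrm{flip}\times\mathrm{id})\circ(\Delta\times\Delta)$. These are equal as morphisms of pairs, since $\mu$ is a Lie algebra morphism and $\Delta$ is the diagonal. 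The remaining step is to invoke the Künneth theorem for the Hochschild–Serre spectral sequence of a product pair $(\mathfrak{g}\times\mathfrak{g},\mathfrak{h}\times\mathfrak{h})$ from page $2$ on, where correcting morphisms act as the identity.
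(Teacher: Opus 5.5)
Your proposal is correct and follows essentially the paper's route. The paper obtains this theorem just by combining Theorems~\ref{thm-coalgebra-structure-on-spectral-sequence} and \ref{thm-algebra-structure-on-spectral-sequence}, resting everything on functoriality of the spectral sequence; your $E^2$ base case, your passage to homology, and your naturality-of-the-diagonal check of the bialgebra identity make explicit what the paper leaves implicit. (Your induction from $E^2$ already removes the ``main obstacle'' you raise. Also, like the paper, you are tacitly assuming that $\mathfrak{g}/\mathfrak{h}$ acts trivially on $H_*(\mathfrak{h},\C)$, which is needed for the identification \eqref{eq-e2-term-of-spectral-sequence-for-trivial-action-on-homology}.)
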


\subsection{Primitive element theorem}
\label{sec-primitive-element-thm}

Let $(\mathfrak{g},\mathfrak{h})$ be an admissible pair of Lie algebras, in the sense of Section~\ref{sec-hopf-algebras-and-hochschild-serre-spectral-seq}.  The purpose of this section is to prove that if $H_* (\mathfrak{g},\C)$ is trivial, then the space of primitive elements in $H_*(\mathfrak{g}/\mathfrak{h},\C)$ may be identified with the space of primitive elements in  $H_*( \mathfrak{h},\C)$, after  a degree shift.

\begin{theorem}
Let $(\mathfrak{g}, \mathfrak{h})$ be an admissible pair in the sense of Definition~\textup{\ref{def-admissible-pair}}. For all $r\ge 2$, all $p> 0$ and all $q>  0$,    $\Prim(E^r_{p,q})=0$. 
\end{theorem}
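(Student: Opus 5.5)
The plan is to prove the stronger statement that, for every $r\ge 2$, the primitive subspace of the bigraded Hopf algebra $E^r_{*,*}$ lies on the two edges $E^r_{*,0}$ and $E^r_{0,*}$. I would argue by induction on $r$, using Theorem~\ref{thm-milnor-moore} (in its bigraded form) together with the compatibility of $d^{(r)}$ with both the product and the coproduct, from Theorems~\ref{thm-coalgebra-structure-on-spectral-sequence} and~\ref{thm-algebra-structure-on-spectral-sequence}.

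\emph{Base case $r=2$.} By Theorems~\ref{thm-coalgebra-structure-on-spectral-sequence}(i) and~\ref{thm-algebra-structure-on-spectral-sequence}(i), $E^2_{*,*}\cong A\hotimes B$ as Hopf algebras, where $A=H_*(\mathfrak g/\mathfrak h,\C)$ sits in bidegrees $(*,0)$ and $B=H_*(\mathfrak h,\C)$ sits in bidegrees $(0,*)$. For connected Hopf algebras one has
\[
\Prim(A\hotimes B)=\Prim(A)\otimes 1\oplus 1\otimes \Prim(B).
\]
To see this, write a primitive element as $\sum a_i\otimes b_i$ with homogeneous parts, and apply $\varepsilon\otimes\mathrm{id}$ and $\mathrm{id}\otimes\varepsilon$ to the primitivity equation. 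So all primitives lie on the axes.

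\emph{Inductive step.} Assume $P=\Prim(E^r)$ equals $P_{*,0}\oplus P_{0,*}$. By Theorem~\ref{thm-milnor-moore}, $E^r\cong\operatorname{Sym}(P)$, and since the isomorphism is bigraded, $E^r\cong \operatorname{Sym}(P_{*,0})\hotimes\operatorname{Sym}(P_{0,*})$. Now I use property (ii) of Theorem~\ref{thm-coalgebra-structure-on-spectral-sequence}. If $c$ is primitive, then $\Delta d^{(r)}c=d^{(r)}c\otimes 1+1\otimes d^{(r)}c$, so $d^{(r)}$ maps primitives to primitives. Because $d^{(r)}$ has bidegree $(-r,r-1)$, this forces the following:
\begin{itemize}
\item $d^{(r)}$ kills $P_{0,*}$, since the target would have negative first degree;
\item $d^{(r)}$ kills $P_{p,0}$ for $p\neq r$, since the target lies off the axes, where there are no primitives;
\item $d^{(r)}$ maps $V=P_{r,0}$ linearly into $W=P_{0,r-1}$.
\end{itemize}
By Theorem~\ref{thm-algebra-structure-on-spectral-sequence}(ii), $d^{(r)}$ is a derivation, so it is determined by these values on generators.

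Next I split the generators. Write $V=\ker d\oplus V'$ and $W=d(V')\oplus W''$. Then $(E^r,d^{(r)})$ becomes the tensor product of two pieces. The first is $\operatorname{Sym}(P'')$ with zero differential, where $P''$ consists of the remaining primitives, all on the axes. The second is $\operatorname{Sym}(V'\oplus dV')$, where $d$ is an isomorphism $V'\to dV'$ between spaces of opposite parity (degrees $r$ and $r-1$). The second factor is a graded Koszul complex, which is acyclic in positive degrees over a field of characteristic $0$; one can see this using the contracting homotopy given by the inverse of $d$ on generators, divided by the word length. By the Künneth theorem,
\[
E^{r+1}=H(E^r,d^{(r)})\cong\operatorname{Sym}(P'').
\]
This identification respects both the product and the coproduct, because the Hopf structure on $E^{r+1}$ is induced from that on $E^r$ through $d^{(r)}$-homology; this follows from the functoriality in the proofs of the two structure theorems, and parts (iii) of both theorems confirm it on the edges. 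The classes of $P''$ are primitive and generate $E^{r+1}$. By Theorem~\ref{thm-milnor-moore} applied to $E^{r+1}$, $\Prim(E^{r+1})$ is the image of $P''$, which lies on the axes. This completes the induction, and hence $\Prim(E^r_{p,q})=0$ whenever $p,q>0$.

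\emph{Main obstacle.} I expect the hardest point to be justifying that the Hopf algebra structure on $E^{r+1}$ is exactly the one induced on $d^{(r)}$-homology from $E^r$, so that the Künneth identification is an isomorphism of Hopf algebras and not merely of vector spaces. A secondary technical point is making sure the Koszul acyclicity is carried out with the correct graded signs.
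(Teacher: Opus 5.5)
Your proof is correct, and in the inductive step it takes a genuinely different route from the paper.

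\textbf{Base case.} This is essentially the same in both. The paper notes that every element of $E^2_{p,q}\cong H_p(\mathfrak g/\mathfrak h,\C)\hotimes H_q(\mathfrak h,\C)$ with $p,q>0$ is decomposable, and then invokes Theorem~\ref{thm-prim-iso-to-indecomp}. Your direct computation of $\Prim(A\hotimes B)$ is equivalent.

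\textbf{Inductive step.} The paper never computes $E^{r+1}$. It writes a $d^{(r)}$-cycle as $c=\sum_i a_ib_i$, with $a_i\in E^r_{p,0}$, $b_i\in E^r_{0,q}$ and the $b_i$ independent. It then argues that each $a_i$ is itself a cycle, concludes that the class of $c$ is decomposable in $E^{r+1}$, and finishes with $\Prim\cong\Indecomp$. You instead determine $d^{(r)}$ on the primitive generators, where only a transgression $P_{r,0}\to P_{0,r-1}$ survives. You then split off a Koszul factor and use characteristic-zero acyclicity to get $E^{r+1}\cong\operatorname{Sym}(P'')$.

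\textbf{What your route buys.} It gives an explicit description of each page. It also shows how the edge primitives evolve from page to page, which Lemmas~\ref{lem-bottom-edge-terms-in-spectral-sequence} and~\ref{lem-side-edge-terms-in-spectral-sequence} later track by hand.

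\textbf{Robustness.} Your argument is also more robust than the paper's. In the paper's step, $d^{(r)}a_i$ lies in $E^r_{p-r,r-1}$, not in $E^r_{p-r,0}$. So injectivity of $E^r_{p-r,0}\otimes E^r_{0,q+r-1}\to E^r_{p-r,q+r-1}$ does not yield $d^{(r)}a_i=0$, and that conclusion can in fact fail. Take $r$ even, $v\in P_{r,0}$ and $w=d^{(r)}v\neq 0$. Then $c=vw$ is a cycle, because $w$ is odd and $w^2=0$, yet $d^{(r)}v\neq 0$. The class of $c$ still vanishes, but only because $vw=d^{(r)}(v^2/2)$, which is exactly your contracting homotopy at work. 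This pattern is what occurs in the paper's application: for $n=1$, a degree-$4$ primitive of the quotient transgresses to a degree-$3$ primitive of the ideal.

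\textbf{Your ``main obstacle.''} The paper relies on the same fact. Its own conclusion that $c$ is decomposable in $E^{r+1}$ uses that the product on $E^{r+1}$ is the one induced on $d^{(r)}$-homology. This is part of the functoriality behind Theorems~\ref{thm-coalgebra-structure-on-spectral-sequence} and~\ref{thm-algebra-structure-on-spectral-sequence}.

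You can also lighten this step, since only the multiplicative half is needed. Once $E^{r+1}$ is generated as an algebra by classes on the axes, $\Indecomp(E^{r+1}_{p,q})=0$ for $p,q>0$. Theorem~\ref{thm-prim-iso-to-indecomp} then gives $\Prim(E^{r+1}_{p,q})=0$, without identifying the coproduct on $E^{r+1}$.
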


\begin{proof} 
The proof is by induction on $r$. The base case $r{=}2$ is taken care of by  Theorems~\ref{thm-prim-iso-to-indecomp} and    \ref{thm-hopf-structure-on-hochschld-serre-sequence}, since for $p>0$ and $q>0$ every element in $E^2_{pq}\cong H_p(\mathfrak{g}/\mathfrak{h}, \C)\otimes H_q(\mathfrak{h}, \C)$ vanishes in $\Indecomp (E^2_{pq})$.  

Assume  that the  theorem holds for a given $r\ge 2$.
It follows from the Milnor-Moore theorem, Theorem~\ref{thm-milnor-moore}, that $E^r_{*,*}$ is freely generated as a graded algebra by its space of primitive elements.  Since these primitive elements all lie in the edge spaces $E^r_{*,0}$ and $E^r_{0,*}$, it follows the the mutliplication map 
\begin{equation}
    \label{eq-milnor-moore-consequence-1}
E^r_{p,0} \otimes E^r_{0,q} \longrightarrow E^r_{p,q}
\end{equation}
is a vector space isomorphism for all $p,q>0$. Now suppose $c\in E^r_{p,q}$ and $d^{(r)}c=0$, so that $c$ determines a class in $E^{r+1}_{p,q}$.  Since \eqref{eq-milnor-moore-consequence-1} is in particular survective, we may write 
\begin{equation}
    \label{eq-general-d-r-closed-element}
c = \sum_{i=1}^n a_ib_i
\end{equation}
with $a_i\in E^r_{p,0}$, $b_i \in E^r_{0,q}$ for $i=1,\dots, n$, and with $\{\,b_1,\dots, b_n\,\}$ a linearly independent set.  Applying $d^{(r)}$ we find that 
\[
0 = dc = \sum_{i=1}^n (d^{(r)}a_i)\cdot b_i ,
\]
since the classes $b_1,\dots, b_n$ are $d^{(r)}$-closed by virtue of their location on the left-edge of the spectral sequence.  We claim that the multiplication map 
\begin{equation}
    \label{eq-milnor-moore-consequence-2}
E^r_{p-r,0} \otimes E^r_{0,q+r-1} \longrightarrow E^r_{p-r,q+r-1}
\end{equation}
is also an isomorphism. When $p{-}r>0$ this follows from the induction hypothesis and the Milnor-Moore theorem, as did \eqref{eq-milnor-moore-consequence-1} above. When $p{-}r =0$,  this is because $E^r_{p-r,0}=E^r_{0,0}$ is spanned by the multiplicative unit; and when $p{-}r<0$, both sides in \eqref{eq-milnor-moore-consequence-2} are zero.
Since \eqref{eq-milnor-moore-consequence-2} is in particular injective, we have 
\[
\sum_{i=1}^n (d^{(r)}a_i)\cdot b_i = 0 \quad \Rightarrow \quad d^{(r)}a_i = 0 \quad \forall i=1,\dots , n.
\]
So all of the classes $a_1,\dots, a_n$ as well as all of the classes $b_1,\dots, b_n$, are $d^{(r)}$-closed.  So the product formula \eqref{eq-general-d-r-closed-element} shows that $c$ determines the class $0$ in $\Indecomp(E^r_{p,q})$.  Hence $\Indecomp(E^r_{p,q})=0$, and therefore $\Prim(E^r_{p,q})=0$
by Theorem~\ref{thm-prim-iso-to-indecomp}.  This finishes the inductive step.
\end{proof}
 
The following  computations, which  concern the edge terms in the Hochs\-child-Serre spectral sequence, are the main steps in the argument.

\begin{lemma} 
\label{lem-bottom-edge-terms-in-spectral-sequence}
Let $(\mathfrak{g},\mathfrak{h})$ be an admissible pair of Lie algebras, as in Definition~\textup{\ref{def-admissible-pair}}. If $\mathfrak{g}/\mathfrak{h}$ acts trivially on   $H_*(\mathfrak{h},\C)$,  and  if 
$H_p (\mathfrak{g},\C) =0$ for all $p>0$, then 
for every $p\ge 2$ the inclusion map 
\[
E^p_{p,0} \longrightarrow E^2 _{p,0}
\]
is an isomorphism onto the space of primitive elements in $E^2_{p,0} = H_p(\mathfrak{g}/\mathfrak{h}, \C)$.
\end{lemma}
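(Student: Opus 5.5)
Since the spectral sequence converges to $H_*(\mathfrak{g},\C)$, which vanishes in positive degrees, we have $E^\infty_{p,q}=0$ whenever $p+q>0$. On the bottom edge, $E^{r+1}_{p,0}=\ker\bigl(d^{(r)}\colon E^r_{p,0}\to E^r_{p-r,r-1}\bigr)$. For $r>p$ the target vanishes, so $E^{p+1}_{p,0}=E^\infty_{p,0}=0$. Therefore $d^{(p)}\colon E^p_{p,0}\to E^p_{0,p-1}$ is injective, and $E^p_{p,0}\subseteq E^2_{p,0}$ is the subspace of classes that survive $d^{(2)},\dots,d^{(p-1)}$. I will prove the two inclusions $E^p_{p,0}\subseteq \Prim(E^2_{p,0})$ and $\Prim(E^2_{p,0})\subseteq E^p_{p,0}$.

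\emph{First inclusion.} Let $c\in E^p_{p,0}$. By Theorem~\ref{thm-coalgebra-structure-on-spectral-sequence}(iii) the coproduct of $c$ computed in $E^2$ equals its coproduct computed in $E^p$. In $E^p$, write $\Delta c = c\otimes 1+1\otimes c+\sum c'\otimes c''$, where the middle terms lie in $E^p_{s,0}\otimes E^p_{p-s,0}$ with $0<s<p$. Apply $d^{(p)}$ using the Leibniz rule of Theorem~\ref{thm-coalgebra-structure-on-spectral-sequence}(ii). The middle terms are killed, because $d^{(p)}$ vanishes on $E^p_{s,0}$ when $s<p$ (its target has negative first index). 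This gives $\Delta(d^{(p)}c)=d^{(p)}c\otimes 1+1\otimes d^{(p)}c+\sum(\pm)\ldots$, and by bidegree the extra terms would have to lie in $E_{0,*}\otimes E_{s,0}$, where they vanish. That alone does not finish the argument. Instead I will use the structure of $E^p$ directly. By the theorem preceding the lemma, $\Prim(E^p_{s,t})=0$ for $s,t>0$, and Milnor–Moore (Theorem~\ref{thm-milnor-moore}) gives $E^p_{*,*}\cong \operatorname{Sym}(\Prim E^p_{*,0})\otimes\operatorname{Sym}(\Prim E^p_{0,*})$. Under this isomorphism $E^p_{p,0}$ splits as $\Prim(E^p_{p,0})$ plus products of bottom-edge primitives of lower degree. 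Those products are $d^{(p)}$-closed, since $d^{(p)}$ is a derivation by Theorem~\ref{thm-algebra-structure-on-spectral-sequence}(ii) and kills each lower-degree factor. Because $d^{(p)}$ is injective on $E^p_{p,0}$, the product part must be zero, so $E^p_{p,0}=\Prim(E^p_{p,0})$. Then Theorem~\ref{thm-coalgebra-structure-on-spectral-sequence}(iii) shows that these elements are primitive in $E^2_{p,0}$.

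\emph{Second inclusion.} This is the step I expect to be the main obstacle: showing that a primitive $c\in E^2_{p,0}$ survives every $d^{(r)}$ with $2\le r<p$. I will argue by induction on $r$, assuming $c\in E^r_{p,0}$ and that $c$ is primitive there, which is consistent by Theorem~\ref{thm-coalgebra-structure-on-spectral-sequence}(iii). The element $x=d^{(r)}c\in E^r_{p-r,r-1}$ has $p-r>0$ and $r-1>0$. The Leibniz compatibility of Theorem~\ref{thm-coalgebra-structure-on-spectral-sequence}(ii) gives $\Delta x = x\otimes 1+1\otimes x$, so $x$ is primitive. But $\Prim(E^r_{p-r,r-1})=0$ by the preceding theorem, hence $x=0$ and $c$ survives to $E^{r+1}$. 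Running this up to $r=p-1$ puts $c$ in $E^p_{p,0}$.

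The delicate point in both directions is the bookkeeping. I must make sure the coproduct on each page restricts as claimed, that primitivity is preserved along the inclusions $E^{r+1}_{*,0}\subseteq E^r_{*,0}$ (again via Theorem~\ref{thm-coalgebra-structure-on-spectral-sequence}(iii)), and that the trivial-action hypothesis is what makes the $E^2$ identifications and the vanishing of the mixed primitives available.
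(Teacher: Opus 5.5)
Your proof is correct, but its first half takes a longer route than the paper's.

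\textbf{Second inclusion.} Your argument for $\Prim(E^2_{p,0})\subseteq E^p_{p,0}$ is exactly the paper's. The element $d^{(r)}c$ is primitive by the Leibniz rule. It lies in $E^r_{p-r,r-1}$ with both indices positive, so it vanishes. Primitivity then passes to $E^{r+1}$ via Theorem~\ref{thm-coalgebra-structure-on-spectral-sequence}(iii).

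\textbf{First inclusion.} Here the paper is more direct. The reasoning you used to get $E^{p+1}_{p,0}=E^\infty_{p,0}=0$ applies in every lower degree: for $0<s<p$,
\[
E^p_{s,0}=E^{s+1}_{s,0}=E^\infty_{s,0}=0 .
\]
So the middle terms of $\Delta c$ in $E^p_{*,0}\otimes E^p_{*,0}$ vanish outright, and every $c\in E^p_{p,0}$ is primitive in $E^p$, with no differential needed. Your abandoned first attempt was one observation away from this, since the middle terms you were trying to control live in $E^p_{s,0}\otimes E^p_{p-s,0}$ with $0<s<p$, which is zero.

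Your replacement argument is still valid. It uses the bigraded Milnor--Moore decomposition of $E^p$, the derivation property of $d^{(p)}$ from Theorem~\ref{thm-algebra-structure-on-spectral-sequence}(ii), and injectivity of $d^{(p)}$ on $E^p_{p,0}$. The decomposable part of $E^p_{p,0}$ is killed by $d^{(p)}$, so it must be zero.

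The cost is that you bring in the full multiplicative structure to show a subspace vanishes when each of its factors already does. The paper's version of this half needs only convergence and the compatibility of the coproducts across pages.
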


\begin{lemma} 
\label{lem-side-edge-terms-in-spectral-sequence}
Let $(\mathfrak{g},\mathfrak{h})$ be an admissible pair of Lie algebras, as in Definition~\textup{\ref{def-admissible-pair}}. If $\mathfrak{g}/\mathfrak{h}$ acts trivially on   $H_*(\mathfrak{h},\C)$,  and  if 
$H_p (\mathfrak{g},\C) =0$ for all $p>0$, then 
for every $p\ge 2$ the kernel of the quotient map 
\[
E^2_{0,p-1} \longrightarrow E^p _{0,p-1}
\]
is, under the identification $E^2_{0,p-1}\cong H_{p-1}(\mathfrak{h},\C)$, precisely the kernel of the quotient map 
\[
H_{p-1}(\mathfrak{h},\C)\longrightarrow   \Indecomp(H_{p-1}(\mathfrak{h},\C)).
\]
\end{lemma}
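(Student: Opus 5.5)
The plan is to run the convergence of the spectral sequence backwards from $E^\infty$ and to use the Hopf algebra structures of Theorem~\ref{thm-hopf-structure-on-hochschld-serre-sequence}. The standing hypothesis $H_p(\mathfrak{g},\C)=0$ for $p>0$ gives $E^\infty_{s,t}=0$ whenever $s+t>0$. By the preceding theorem, $\Prim(E^r_{s,t})=0$ for $s,t>0$. Together with Milnor--Moore, this shows that every $E^r$ with $r\ge2$ is the free graded-commutative algebra on its primitives, and that these primitives all sit on the two edges. So we may write
\[
E^r_{*,*}\cong \operatorname{Sym}\bigl(\Prim(E^r_{*,0})\oplus \Prim(E^r_{0,*})\bigr).
\]
In this form, $d^{(r)}$ is a derivation (Theorem~\ref{thm-algebra-structure-on-spectral-sequence}(ii)) that is also a coderivation (Theorem~\ref{thm-coalgebra-structure-on-spectral-sequence}(ii)). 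A coderivation carries primitives to primitives, and so does a derivation applied to products. Hence $d^{(r)}$ is determined by a linear map
\[
\tau_r\colon \Prim(E^r_{r,0})\longrightarrow \Prim(E^r_{0,r-1}),
\]
which is a transgression extended as a derivation. No other edge-to-edge map is possible: the image of a bottom-edge primitive under $d^{(r)}$ is primitive, and a primitive in the interior vanishes.

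Next I would compute, by a Koszul-type argument, the effect of such a differential on $\operatorname{Sym}(P\oplus Q)$, where $P$ is bottom-edge in bidegrees $(r,0)$ and $Q$ is left-edge in bidegrees $(0,r-1)$, the two having opposite parity in total degree. Choose a complement to $\ker\tau_r$ in $P$ and a complement to $\operatorname{im}\tau_r$ in $Q$. Each pair $x$, $\tau_r x$ then spans an acyclic Koszul factor, because $\operatorname{Sym}$ of a pair of generators whose degrees differ by one, with $d$ mapping one onto the other, has homology $\C$. The homology is therefore $\operatorname{Sym}$ of the remaining primitives, namely $\ker\tau_r$ together with $\operatorname{coker}\tau_r$ and the primitives in other degrees. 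This identifies $E^{r+1}$ and shows that it again has the same structure. Specifically, $\Prim(E^{r+1}_{s,0})$ is the image of $\ker \tau_r$ when $s=r$ and is unchanged otherwise, and dually $\Prim(E^{r+1}_{0,t})$ is $\operatorname{coker}\tau_r$ when $t=r-1$.

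Now apply $E^\infty=\C$. A bottom-edge primitive in degree $p$ can only be killed by $d^{(p)}$, since $d^{(r)}$ vanishes on $E^r_{p,0}$ for $r<p$ except through its own transgression, and it is not hit by anything. Hence $\tau_p$ must be injective on $\Prim(E^p_{p,0})$, and dually $\tau_p$ must be surjective onto $\Prim(E^p_{0,p-1})$. Tracking the degree $(p,0)$ spot, one finds $E^p_{p,0}=\bigcap_{r<p}\ker d^{(r)}\subseteq E^2_{p,0}$. The differentials $d^{(r)}$ for $r<p$ act on the Sym-decomposition and kill exactly the decomposable part. Each decomposable element $a\cdot b$ with lower-degree factors has some factor that transgresses earlier, and, by the Koszul description, it is either not a cycle or is a boundary. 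So $E^p_{p,0}$ is identified with $\Prim(E^2_{p,0})$, which is the statement of Lemma~\ref{lem-bottom-edge-terms-in-spectral-sequence}. The same analysis on the left edge yields Lemma~\ref{lem-side-edge-terms-in-spectral-sequence}.

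The main obstacle is making this precise. One must verify that the intermediate pages' subquotients of $E^2_{p,0}$ match the abstract Sym-homology, and in particular that the inclusion $E^p_{p,0}\to E^2_{p,0}$ sends $\Prim$ to $\Prim$, which uses Theorem~\ref{thm-coalgebra-structure-on-spectral-sequence}(iii). One must also check that the decomposables really are killed rather than surviving as cycles. I would handle this by induction on $p$, using the fact that in total degree below $p$ the pages are already known to be $\operatorname{Sym}$ of the surviving primitives.
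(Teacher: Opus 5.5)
Your route is sound in outline but heavier than the paper's, and for this particular lemma your text only says ``the same analysis on the left edge.'' You determine each $d^{(r)}$ completely, as the derivation extending a transgression $\tau_r\colon\Prim(E^r_{r,0})\to\Prim(E^r_{0,r-1})$. You then compute $H(E^r,d^{(r)})$ by Koszul acyclicity, so that $E^{r+1}_{0,*}$ becomes $E^r_{0,*}$ modulo the ideal generated by $\operatorname{im}\tau_r$, which lives in degree $r-1$. That transgression-theorem picture gives both edge lemmas at once. Its cost is the characteristic-zero Koszul computation, a choice of complements, and an identification of $H(E^r,d^{(r)})$ with $E^{r+1}$ as Hopf algebras, so that the new generators really are the primitives.

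The paper needs none of this, because on the left edge only the image of $d^{(r)}$ matters, never its kernel. For $2\le r\le p-1$, every $b\in E^r_{r,p-r}$ is a sum $\sum_j b_{1,j}b_{2,j}$ with $b_{1,j}\in E^r_{r,0}$ and $b_{2,j}\in E^r_{0,p-r}$; this uses Milnor--Moore and the vanishing of interior primitives. Since $d^{(r)}$ vanishes on the left edge, the Leibniz rule gives $d^{(r)}b=\sum_j (d^{(r)}b_{1,j})\,b_{2,j}$. Both factors have positive degree, $r-1$ and $p-r$, so $d^{(r)}b$ is decomposable. Hence $\Indecomp(E^r_{0,p-1})\to\Indecomp(E^{r+1}_{0,p-1})$ is an isomorphism for each such $r$.

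For the reverse inclusion, you need that $E^p_{0,s}=E^\infty_{0,s}=0$ for $0<s<p-1$, since no differential reaches $(0,s)$ after page $s+1$. This forces every decomposable in degree $p-1$ to have died by page $p$, so $E^p_{0,p-1}\to\Indecomp(E^p_{0,p-1})$ is an isomorphism.

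These two points are exactly what your ``same analysis'' must supply, and together they are the whole of the paper's proof. Your Koszul machinery contains them implicitly, through the ideal generated by $\operatorname{im}\tau_r$ and the condition $E^\infty=\C$, but you never state them.
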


\begin{remark} 
The   proof of Lemma~\ref{lem-bottom-edge-terms-in-spectral-sequence} will require only the coalgebra structure on homology, which is available for any pair $(\mathfrak{g},\mathfrak{h})$ consisting of a Lie algebra and an ideal.  The proof of Lemma~\ref{lem-side-edge-terms-in-spectral-sequence} will require only the algebra structure, but for this we shall the extra hypothesis that $(\mathfrak{g},\mathfrak{h})$ is an admissible pair, as in Definition~\ref{def-admissible-pair}.
\end{remark}

\begin{proof}[Proof of Lemma~\ref{lem-bottom-edge-terms-in-spectral-sequence}] 
Fix $p\ge 2$. The main step towards proving the lemma is to show  that 
\begin{equation}
\label{eq-primitive-elements-are-closed}
    \Prim (E^r_{p,0})\subseteq E^{r+1}_{p,0} \qquad \forall r =2,\ldots, p-1.
\end{equation}
With this, and  in view of the commutative diagram
\begin{equation*}
    \xymatrix{
    E^{r+1}_{*, 0} \ar[r]^-\Delta \ar[d]_{\text{incl}}& E^{r+1}_{*,0}\hotimes E^{r+1}_{*,0}  \ar[d]^{\text{incl}}
    \\
    E^{r}_{*, 0} \ar[r]_-\Delta & E^{r}_{*,0}\hotimes E^{r}_{*,0} ,
    }
\end{equation*}
we find that 
\begin{equation*}
    \Prim (E^r_{p,0}) =  \Prim ( E^{r+1}_{p,0}) \qquad \forall r =2,\ldots, p-1,
\end{equation*}
and therefore that 
\begin{equation*}
    \Prim (E^2_{p,0}) = \Prim ( E^{p}_{p,0}).
\end{equation*}
But every element of  $E^{p}_{p,0}$ is necessarily primitive in the coalgebra $E^p_{* , 0}$. Indeed
\[
E^p_{1,0} = E^\infty_{1,0} , \quad 
E^p_{2,0} = E^\infty_{2,0} , \quad \ldots   \qquad    E^p_{p-1,0} =E^\infty_{p-1,0} , 
\]
and therefore 
\[
E^p_{1,0} =  
E^p_{2,0} =   \cdots   = E^p_{p-1,0} =0 , 
\]
thanks to the assumption that $H_p(\mathfrak{g},\C)$ for all $p>0$.  It is therefore a consequence of \eqref{eq-primitive-elements-are-closed} that 
\[
 \Prim (E^2_{p,0}) =   E^{p}_{p,0},
\]
as required.

To prove \eqref{eq-primitive-elements-are-closed}, let $c\in \Prim (E^r_{p,0})$  for some $r\in \{\, 2,\dots, p{-}1\,\}$ and compute 
\[
\Delta d^{(r)} c = d^{(r)}\Delta c = d^{(r)}c \otimes 1 + 1 \otimes d^{(r)} c ,
\]
which shows that  
$d^{(r)}c$ is primitive.  However $d^{(r)}c\in E^r_{p-r,r-1}$, and since $p{-}r>0$ and $r{-}1> 0$, no nonzero element of $E^r_{p-r,r-1}$ is primitive.  Hence $d^{(r)}c=0$, and so $c\in E^{r+1}_{p,0}$, as required.
\end{proof}

\begin{proof}[Proof of Lemma~\ref{lem-side-edge-terms-in-spectral-sequence}]
The proof  is similar to the proof of Lemma~\ref{lem-bottom-edge-terms-in-spectral-sequence}. Let $p\ge 2$ and let $r\in \{\, 2,\ldots, p-1\,\}$.   The commutative diagram 
\begin{equation*}
    \xymatrix{
    E^{r}_{0,*}\hotimes E^{r}_{0,*} \ar[r]^-{\mu} \ar[d]_{\text{quot}}& E^{r}_{0,*}  \ar[d]^{\text{quot}}
    \\
    E^{r+1}_{0,* } \hotimes E^{r+1}_{0,*} \ar[r]_-{\mu} & E^{r+1}_{0,*},
    }
\end{equation*}
makes it clear that the quotient map from $E^{r}_{0,p-1}$ to $E^{r+1}_{0,p-1}$ induces a surjective map
\[
\Indecomp (E^{r}_{0,p-1}) \longrightarrow \Indecomp(E^{r+1}_{0,p-1}).
\]
We claim first that this surjective map is also injective.  In other words, we claim that every element of the image of $d^{(r)}$ in  $E^r_{0,p-1}$  is a sum of products of elements with degrees lower than $p{-}1$, and so determines  the zero element in $\Indecomp (E^{r}_{0,p-1})$.  Suppose that $c\in E^r_{0,p-1}$ and $c = d^{(r)} b$, for some  $b\in E^{r}_{r,p-r}$. Now, every element of  $E^{r}_{r,p-r}$ is necessarily a combination 
\[
b = \sum _j b_{1,j}\cdot b_{2,j}\qquad 
(b_{1,j}\in E^r_{r,0},\,\,\, b_{2,j}\in E^r_{0,p-r}).
\]
Applying the differential $d^{(r)}$, and keeping in mind that $d^{(r)}$ vanishes on $E^r_{0,p-r}$, we find that 
\[
c= d^{(r)} b = \sum _j (d^{(r)}b_{1,j})\cdot  (b_{2,j}),
\]
which gives the required representation of $c$.

We now have isomorphisms
\[
\Indecomp (E^{2}_{0,p-1}) \stackrel\cong \to \Indecomp(E^{3}_{0,p-1})\stackrel\cong\to \cdots \stackrel\cong\to \Indecomp (E^{p}_{0,p-1})  
\]
induced from the quotient maps from $E^{r}_{0,p-1}$ to $E^{r+1}_{0,p-1}$.  The proof is concluded by observing that the quotient map 
\[
 E^{p}_{0,p-1}\longrightarrow   \Indecomp (E^{p}_{0,p-1}) 
\]
is in fact an isomorphism, since 
\[
0< s < p-1\quad \Rightarrow \quad  E^{p}_{0,s} = 0,
\]
thanks to the assumed vanishing of $H_p(\mathfrak{g},\C)$ for all $p>0$, and thanks to the fact  that the spaces $E^p_{0,s}$ above  are equal to their $E^\infty$-counter\-parts. 
\end{proof}

\begin{theorem} 
\label{thm-primitive-element-theorem}
Let $(\mathfrak{g},\mathfrak{h})$ be an admissible pair of Lie algebras. If $\mathfrak{g}/\mathfrak{h}$ acts trivially on   $H_*(\mathfrak{h},\C)$,  and  if 
$H_p (\mathfrak{g},\C) =0$ for all $p>0$, then there are isomorphisms 
\[
\Prim \bigl (H_{p+1}(\mathfrak{g}/ \mathfrak{h}, \C)\bigr ) 
\cong 
\Prim \bigl (H_{p}( \mathfrak{h}, \C)\bigr ) \qquad \forall p \ge 0 .
\]
\end{theorem}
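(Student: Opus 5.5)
The plan is to use the transgression in the Hochschild-Serre spectral sequence, combining Lemmas~\ref{lem-bottom-edge-terms-in-spectral-sequence} and \ref{lem-side-edge-terms-in-spectral-sequence}. Fix $p\ge 1$ and set $n=p+1\ge 2$. Since $H_*(\mathfrak{g},\C)$ vanishes in positive degrees, $E^\infty_{s,t}=0$ whenever $s+t>0$. Consider the differential
\[
d^{(n)}\colon E^{n}_{n,0}\longrightarrow E^{n}_{0,n-1}.
\]
Its kernel is $E^{n+1}_{n,0}$, which equals $E^\infty_{n,0}=0$, because all later differentials out of $E_{n,0}$ land in negative columns. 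Its cokernel is $E^{n+1}_{0,n-1}$, which equals $E^\infty_{0,n-1}=0$, because no later differentials hit $E_{0,n-1}$: their sources $E^r_{r,n-r}$ have $n-r<0$ for $r>n$. So $d^{(n)}$ is an isomorphism.

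By Lemma~\ref{lem-bottom-edge-terms-in-spectral-sequence}, $E^n_{n,0}$ is identified with $\Prim(H_n(\mathfrak{g}/\mathfrak{h},\C))$. By Lemma~\ref{lem-side-edge-terms-in-spectral-sequence}, $E^n_{0,n-1}$ is the quotient of $H_{n-1}(\mathfrak{h},\C)$ by the kernel of the map to $\Indecomp$, so it is $\Indecomp(H_{n-1}(\mathfrak{h},\C))$. Theorem~\ref{thm-prim-iso-to-indecomp} applies to $H_*(\mathfrak{h},\C)$, which is a commutative and cocommutative connected Hopf algebra because the restricted product and correcting morphisms satisfy the admissibility conditions. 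It identifies this indecomposable space with $\Prim(H_{n-1}(\mathfrak{h},\C))=\Prim(H_p(\mathfrak{h},\C))$. Composing these identifications gives the required isomorphism for $p\ge 1$.

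The case $p=0$ must be checked separately, since Lemmas~\ref{lem-bottom-edge-terms-in-spectral-sequence} and \ref{lem-side-edge-terms-in-spectral-sequence} require index at least $2$. Here $\Prim(H_0(\mathfrak{h},\C))=0$, so it suffices to show that $H_1(\mathfrak{g}/\mathfrak{h},\C)=0$. This follows because $E^2_{1,0}=H_1(\mathfrak{g}/\mathfrak{h},\C)$ supports no nonzero differentials, so $E^2_{1,0}=E^\infty_{1,0}=0$. Alternatively, $H_1(\mathfrak{g}/\mathfrak{h})$ is a quotient of $H_1(\mathfrak{g})=0$.

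The main point to get right is the bookkeeping: the lemmas are stated for general $p\ge 2$, so with $n=p+1$ the indices match, since $E^n_{0,n-1}$ is the target of $d^{(n)}$ from $E^n_{n,0}$. One must also confirm the $E^{n+1}=E^\infty$ claims at both edges. If the isomorphisms are to be natural, it is worth noting that each identification is induced by canonical edge maps.
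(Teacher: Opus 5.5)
Your proposal is correct and follows essentially the same route as the paper. You show that the transgression $d^{(p+1)}\colon E^{p+1}_{p+1,0}\to E^{p+1}_{0,p}$ is an isomorphism because $E^\infty$ vanishes in positive total degree, and you identify its ends with $\Prim(H_{p+1}(\mathfrak{g}/\mathfrak{h},\C))$ and $\Indecomp(H_p(\mathfrak{h},\C))\cong\Prim(H_p(\mathfrak{h},\C))$ using the two edge-term lemmas and the Milnor--Moore corollary; your $p=0$ case also matches, since the paper uses exactly your alternative argument that $H_1(\mathfrak{g}/\mathfrak{h},\C)$ is a quotient of $H_1(\mathfrak{g},\C)=0$.
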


\begin{proof}
The assumption that $H_1 (\mathfrak{g},\C) =0$   implies that $H_1(\mathfrak{g}/\mathfrak{h},\C)=0$  because the natural map 
\[
H_1(\mathfrak{g}, \C) \longrightarrow H_1(\mathfrak{g}/\mathfrak{h}, \C) 
\]
 is always surjective. This proves the $p=0$ case.

For   $p\ge 1$,    the assumptions   that $H_{p}(\mathfrak{g}, \C)=0$ and  $H_{p+1} (\mathfrak{g},\C) =0$ imply that  the differential 
\begin{equation}
    \label{eq-differential-on-page-p}
d^{(p+1)} \colon E^{p+1}_{p+1,0}\longrightarrow E^{p+1}_{0,p}
\end{equation}
on the $p+1$'th page of the Hochschild-Serre spectral sequence is an isomorphism. This is because there are no non-zero differentials beyond this page that begin or end at either of the places $(p+1,0)$ or $(0,p)$, and therefore  
\[
\ker(d^{(p+1)}) \stackrel{\text{def}}= E^{p+2}_{p+1,0}= E^{p+2}_{p+1,0}=\cdots =  E^{\infty}_{p+1,0},
\]
while $E^{\infty}_{p+1,0}=0$ because it is a subquotient of $H_{p+1}(\mathfrak{g} , \C)$.
Similarly
\[
\operatorname{coker}(d^{(p+1)}) \stackrel{\text{def}}= E^{p+2}_{0,p}= E^{p+3}_{0,p}=\cdots =  E^{\infty}_{0,p} = 0,
\]
 since $E^{\infty}_{0,p}$ is a subquotient of $H_{p}(\mathfrak{g} , \C)$.

For $p\ge 1$ we  can now form the diagram of isomorphisms 
\[
\xymatrix{
   &  & E^{p+1}_{p+1,0} \ar[r]^-\cong \ar[d]_{d^p}^-{\cong}& \Prim (E^2_{p+1,0})
    \\
  \Prim (E^2_{0,p}) \ar[r]_-{\cong} &  \Indecomp (E^2_{0,p}) \ar[r]_-{\cong}  & E^{p+1}_{0,p}
}
\]
in which the left-most horizontal isomorphism is from Theorem~\ref{thm-prim-iso-to-indecomp}, the next two horizontal isomorphisms are from Lemmas~\ref{lem-side-edge-terms-in-spectral-sequence} and   \ref{lem-bottom-edge-terms-in-spectral-sequence}, respectively,   and the vertical isomorphism is the map \eqref{eq-differential-on-page-p} that we have studied above.
\end{proof}

\section{Locally generated derivations for a one-point space} 
\label{sec-homology-of-lgd-for-point}

We shall compute homology for the  Lie algebra of locally generated derivations in the case of a one-point space.

\subsection{Description of the Lie algebra}

In the case of a one-point space $\{ x\}$, since every derivation of the matrix algebra  $A_x\cong M_{2^\alpha} (\C)$ is locally generated, 
 \[
    \mathfrak{lgd}_\alpha (\mathrm{pt}) \cong \mathfrak{pgl}(n,\C)\qquad (n= 2^{\alpha(x)}).
\]
Of course, the projective general linear Lie algebra $\mathfrak{pgl}(n,\C)$ is isomorphic to $\mathfrak{sl}(n,\C)$.  We find therefore that 
 \[
    \lgd (\mathrm{pt}) \cong \varinjlim _k \mathfrak{sl}(2^k, \C) ,
\] 
where the morphisms in the directed system are 
\[
\begin{gathered}
\mathfrak{sl}(2^k, \C) \longrightarrow \mathfrak{sl}(2^{k+1}, \C)
\\
X \longmapsto 
\left [\begin{smallmatrix}  X & 0 \\ 0 & X 
\end{smallmatrix}\right ]
\end{gathered}
\]

As for the subalgebra
$\lgd^0_\alpha (\mathrm{pt})\subseteq \lgd_\alpha(\mathrm{pt})$, using an isomorphism 
\[
A_\alpha(x) \cong M_n(\C) \qquad (n= 2^{\alpha(x)})
\]
under which the state  $\varphi\colon  M_n(\C) \to \C$ has the form
\[
\varphi ([t_{ij}])= t_{11}
\]
we find that 
\[
\lgd^0_\alpha (\mathrm{pt}) \cong 
\Bigl\{ 
\left [\begin{smallmatrix}  x_{11} & 0 & \cdots & 0
\\
0 & x_{22} & \cdots& x_{2n}
\\
\vdots &\vdots &&\vdots 
\\ 0 & x_{n2}& \cdots & x_{nn} 
\end{smallmatrix}\right ]
\Bigr \} \Big / \operatorname{Center} \bigl ( \mathfrak{gl}(n,\C)\bigr ) \cong  \mathfrak{gl}(n{-}1,\C) ,
\]
via the map
\[
\left [\begin{smallmatrix}  x_{11} & 0 & \cdots & 0
\\
0 & x_{22} & \cdots& x_{2n}
\\
\vdots &\vdots &&\vdots 
\\ 0 & x_{n2}& \cdots & x_{nn} 
\end{smallmatrix}\right ]\longmapsto 
\left [\begin{smallmatrix}  
  x_{22}-x_{11} & \cdots& x_{2n}
\\
 \vdots &&\vdots 
\\   x_{n2}& \cdots & x_{nn}-x_{11} 
\end{smallmatrix}\right ].
\]
It follows that 
\[
\lgd^0(\mathrm{pt}) \cong \varinjlim_k    \mathfrak{gl}(2^k{-}1,\C)
\]
under the directed system of morphisms
\[
\begin{gathered}
 \mathfrak{gl}(2^k{-}1, \C) \longrightarrow  \mathfrak{gl}(2^{k+1}{-}1, \C)
\\
X \longmapsto 
 \left [\begin{smallmatrix}  X & 0 &0\\ 0& 0 & 0 \\ 0& 0 & X 
\end{smallmatrix}\right ] .
\end{gathered}
\]
\subsection{Computation for the unitary group}
In order to compute homology, we may use some well-known results involving Lie groups.  We start with 
   \[
   \mathfrak{su}(n)\otimes \C \cong \mathfrak{sl}(n,\C)
   \]
from which it follows that $H_*(\mathfrak{sl}(n,\C),\C)$ is the complexification of $H_*(\mathfrak{su}(n),\R)$.

Now if $\mathfrak{g}$ is the Lie algebra of a finite-dimensional, compact Lie group, then it follows from an averaging argument that  the homology of $\mathfrak{g}$ with trivial coefficients may be computed from the subcomplex
\begin{equation}
    \label{eq-complex-for-lie-algebras-of-compact-groups}
\xymatrix{
  \bigl [\Lambda^0(\mathfrak{g})\bigr ]^G   &  \bigl [ \Lambda^1(\mathfrak{g}) \bigr ]^G \ar[l]_{b'} &  \bigl [\Lambda^2(\mathfrak{g}) \bigr ]^G \ar[l]_{b'} &\cdots  \ar[l]_-{b'}
}
\end{equation}
of the complex \eqref{eq-complex-for-homology-with-trivial-coefficients}.  But actually \emph{the differentials in \eqref{eq-complex-for-lie-algebras-of-compact-groups} are all zero}, and so we obtain isomorphisms 
\begin{equation}
\label{eq-chevalley-eilenberg-formula}
\bigl [\Lambda^p(\mathfrak{g})\bigr ]^G \stackrel \cong \longrightarrow H_p (\mathfrak{g}, \C)
\end{equation}
when $\mathfrak{g}$ is the Lie algebra of a compact group. This isomorphism  is due to Chevalley and Eilenberg \cite{ChevalleyEilenberg48}.

Hopf \cite{Hopf41} established the Milnor-Moore-type isomorphism
\[
H_* (\mathfrak{g},\C) \cong  
\Lambda^* \bigl ( \operatorname{Prim}  (H_* (\mathfrak{g}, \C) )\bigr )  
\]
in this case (long before the work of Milnor and Moore, of course), which makes it of interest to determine the primitive elements in Lie algebra homology.  

When $\mathfrak{g} = \mathfrak{u}(n)$, the explicit determination of those elements in $[\Lambda ^* (\mathfrak{g})]^G$ that correspond to the primitive elements in Lie algebra homology under the isomorphism \eqref{eq-chevalley-eilenberg-formula} is due to Dynkin \cite{Dynkin53translated,Dynkin54translated}; our formula \eqref{eq-dynkin-formula} is \cite[Eqn~(8.8)]{Dynkin54translated}, up to normalization.
See also Kostant's work on this topic, \cite{Kostant58}. Using the normalized invariant inner product 
\[
\langle X,Y\rangle = \frac{1}{n} \operatorname{Re} \left ( \Tr (X^*Y)\right ) 
\]
on  $\mathfrak{u}(n)$ to identify $\mathfrak{u}(n)$ with its vector space dual, and using the associated isomorphisms
\[
\Lambda ^* \mathfrak{u}(n) \cong \Lambda ^* \mathfrak{u}(n)^* \cong \left [\Lambda ^* \mathfrak{u}(n)\right ]^*,
\]
the primitive elements are precisely  multiples of the forms
\begin{equation}
    \label{eq-dynkin-formula}
X_1\wedge \cdots \wedge X_p \longmapsto \sum _{\sigma} (-1)^\sigma \frac 1 n \Tr(X_{\sigma(1)}\cdots X_{\sigma(p)}),
\end{equation}
where $p =1,3,5,\dots ,2n{-}1$, and where the sums are over all permutations. 

The scalar normalizations are chosen  so that under the inclusions 
\[
\begin{gathered}
\mathfrak{u}(n)\longrightarrow \mathfrak{u}(2n)
\\
X \longmapsto \left [\begin{smallmatrix}  X & 0 \\ 0 & X 
\end{smallmatrix}\right ] ,
\end{gathered}
\]
the given primitive generators of $\mathfrak{u}(n)$ map to the same for $\mathfrak{u}(2n)$. The following result follows  from  this:
 
\begin{theorem}
\label{thm-homology-of-u-2-k}
  \[
  \operatorname{Prim}  \bigl (H_p (\varinjlim _{k} \mathfrak{u}(2^k), \R) \bigr ) =
  \begin{cases} \R & p = 1,3,5,7,\dots \\ 0 & \text{otherwise}.\end{cases}
  \]
\end{theorem}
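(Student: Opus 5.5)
The plan is to pass Dynkin's description of primitives through the direct limit. First I will check that Lie algebra homology commutes with direct limits. The Chevalley–Eilenberg complex \eqref{eq-complex-for-homology-with-trivial-coefficients} of $\mathfrak{g}_\infty=\varinjlim_k \mathfrak{u}(2^k)$ is the direct limit of the complexes $\Lambda^*\mathfrak{u}(2^k)$, because $\Lambda^p$ commutes with filtered colimits. Filtered colimits of vector spaces are exact, so
\[
H_p(\mathfrak{g}_\infty,\R)\cong \varinjlim_k H_p(\mathfrak{u}(2^k),\R).
\]
The coproduct is natural for Lie algebra morphisms, and $\hotimes$ commutes with filtered colimits. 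Hence this identification is an isomorphism of coalgebras. Since the transition maps are coalgebra maps and kernels commute with filtered colimits, it also gives $\Prim H_p(\mathfrak{g}_\infty)\cong \varinjlim_k \Prim H_p(\mathfrak{u}(2^k))$.

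Next I need to compute each $\Prim H_p(\mathfrak{u}(n))$ and the transition maps. By Hopf's theorem, together with the Chevalley–Eilenberg isomorphism \eqref{eq-chevalley-eilenberg-formula} and Dynkin's result, $\Prim H_p(\mathfrak{u}(n))$ is one-dimensional for $p=1,3,\dots,2n-1$ and zero otherwise. (It is zero in even degrees and for $p\ge 2n$.) In degree $p$ it is spanned by the class dual to the form $\omega_p^{(n)}$ of \eqref{eq-dynkin-formula}.

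The issue is that \eqref{eq-dynkin-formula} describes these classes as functionals, i.e. in cohomology, while the transition maps act covariantly on homology. To handle this I will use the nondegenerate pairing between $H_p(\mathfrak{u}(n),\R)$ and $H^p(\mathfrak{u}(n),\R)$. Under this pairing the primitive homology classes are dual to the indecomposable (equivalently primitive) cohomology classes. Let $j\colon\mathfrak{u}(n)\to\mathfrak{u}(2n)$ be the block-diagonal embedding. Then
\[
j^*\omega_p^{(2n)}=\omega_p^{(n)},
\]
because $\frac{1}{2n}\Tr(\operatorname{diag}(Y,Y))=\frac1n\Tr(Y)$. This is exactly why the normalization was chosen.

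Now fix odd $p$ and pick $k$ with $2^k\ge (p+1)/2$. Choose $c_k\in \Prim H_p(\mathfrak{u}(2^k))$ with $\langle \omega_p^{(2^k)},c_k\rangle=1$. By the restriction identity above,
\[
\langle \omega_p^{(2^{k+1})},j_*c_k\rangle=\langle \omega_p^{(2^k)},c_k\rangle=1 .
\]
So $j_*c_k$ is a nonzero primitive in a one-dimensional space and therefore spans it. The transition maps on $\Prim H_p$ are thus isomorphisms $\R\to\R$ once $2^k\ge (p+1)/2$, and the colimit is $\R$. For even $p$ every term of the colimit is zero, so the colimit is zero.

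The step I expect to be the main obstacle is justifying the homology/cohomology duality in this setting. Specifically, I must show that Dynkin's cohomology generators pair nontrivially with the primitive homology classes, which requires knowing that $\omega_p$ does not annihilate the primitives. I plan to get this from the Hopf-algebra duality $\Prim(H_*)^*\cong\Indecomp(H^*)$ for finite-type connected Hopf algebras, together with the fact that Dynkin's forms are indecomposable generators of $H^*(\mathfrak{u}(n))$.
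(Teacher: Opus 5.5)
Your proposal is correct and follows the same route as the paper. The paper combines the Chevalley--Eilenberg isomorphism, Hopf's theorem and Dynkin's forms \eqref{eq-dynkin-formula}, and notes that the $\frac1n$ normalization makes these forms compatible with $X\mapsto \mathrm{diag}(X,X)$. Your version supplies the details the paper leaves implicit: homology and $\operatorname{Prim}$ commute with the direct limit, and the exact identity $j^*\omega_p^{(2n)}=\omega_p^{(n)}$ on forms transfers to the primitive homology classes through the perfect pairing $\operatorname{Prim}(H_p)\times\operatorname{Indecomp}(H^p)\to\R$. One small adjustment is needed for real coefficients: replace $\omega_p$ by $i\,\omega_p$ when $p\equiv 1 \pmod 4$, since it is imaginary-valued on $\mathfrak{u}(n)$ in those degrees.
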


Now $\mathfrak{u}(n) \cong \R \times \mathfrak{su}(n)$, and it follows from the above that  
 \[
  \operatorname{Prim}  \bigl (H_p (\varinjlim _{k} \mathfrak{su}(2^k), \R) \bigr ) =
  \begin{cases} \R & p = 3,5,7,\dots \\ 0 & \text{otherwise},\end{cases}
  \]
from which we obtain:
 
\begin{theorem}
\label{thm-homology-of-lgd-of-pt}
  \[
  \operatorname{Prim}  \bigl(H_p (\mathfrak{lgd}(\mathrm{pt}), \C) \bigr )
  =
  \begin{cases} \C & p = 3,5,\dots \\ 0 & \text{otherwise}.\end{cases}
  \]
\end{theorem}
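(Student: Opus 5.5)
The plan is to combine the identification $\lgd(\mathrm{pt}) \cong \varinjlim_k \mathfrak{sl}(2^k,\C)$ from the previous subsection with the computation for $\varinjlim_k \mathfrak{su}(2^k)$ recorded just before the statement. There are three steps: complexify, commute homology past the direct limit, and check that primitives match under complexification.

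First, $\mathfrak{sl}(2^k,\C) \cong \mathfrak{su}(2^k)\otimes_\R \C$. The inclusions $X\mapsto \left[\begin{smallmatrix} X & 0\\ 0 & X\end{smallmatrix}\right]$ on the $\mathfrak{sl}$ side are the complexifications of the same block-diagonal inclusions on the $\mathfrak{su}$ side. Hence
\[
\lgd(\mathrm{pt}) \cong \Bigl(\varinjlim_k \mathfrak{su}(2^k)\Bigr)\otimes_\R \C
\]
as Lie algebras. The Chevalley--Eilenberg complex \eqref{eq-complex-for-homology-with-trivial-coefficients} of a complexified real Lie algebra is the complexification of the real complex, because $\Lambda^p_\C(\mathfrak{g}\otimes\C) = \Lambda^p_\R(\mathfrak{g})\otimes\C$ and $b'$ is defined over $\R$. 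Complexification is exact, so
\[
H_*(\lgd(\mathrm{pt}),\C) \cong H_*\bigl(\varinjlim_k\mathfrak{su}(2^k),\R\bigr)\otimes_\R\C .
\]

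Second, the coproduct is induced by the diagonal $\Delta$, which is defined over $\R$, and the Künneth isomorphism is compatible with extension of scalars. So the coalgebra structure on the complex side is the complexification of the real one. Primitivity is the kernel of the linear map $c\mapsto \Delta c - 1\otimes c - c\otimes 1$, and complexification is exact, so
\[
\Prim\bigl(H_p(\,\cdot\,,\C)\bigr) = \Prim\bigl(H_p(\,\cdot\,,\R)\bigr)\otimes\C .
\]
The displayed formula for $\varinjlim_k\mathfrak{su}(2^k)$ then gives $\C$ in degrees $3,5,7,\dots$ and $0$ otherwise.

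The step needing most care is that previous formula itself, which I take as given, but I will spell out its justification. Homology commutes with direct limits of Lie algebras, because $\Lambda^*$ commutes with filtered colimits and filtered colimits are exact. By Dynkin's normalization, the degree-$p$ primitive generator for $\mathfrak{u}(2^k)$ maps to the corresponding generator for $\mathfrak{u}(2^{k+1})$, and it exists once $p\le 2^{k+1}-1$. This gives Theorem~\ref{thm-homology-of-u-2-k}. Then $\mathfrak{u}(n)\cong\R\times\mathfrak{su}(n)$ splits off the abelian factor, which contributes exactly the degree-one primitive. Concretely, the trace form in degree $1$ restricts to zero on $\mathfrak{su}$, while the higher Dynkin forms restrict nontrivially. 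This leaves the primitives in degrees $3,5,\dots$, and complexifying completes the proof.
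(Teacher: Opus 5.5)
Your proposal is correct and follows the paper's route. You identify $\lgd(\mathrm{pt})$ with $\varinjlim_k \mathfrak{sl}(2^k,\C)$, view this as the complexification of $\varinjlim_k \mathfrak{su}(2^k)$, and read off the primitives from the Chevalley--Eilenberg/Hopf/Dynkin computation for $\mathfrak{u}(2^k)$ after splitting off the central $\R$. Your extra checks, that complexification commutes with the coproduct and hence with $\Prim$, and that homology commutes with the filtered direct limit, are the details the paper leaves implicit in its ``from which we obtain.''
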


There is a similar computation for $\lgd^0(\mathrm{pt})$:
\begin{theorem}
\label{thm-homology-of-lgd-zero-of-pt}
  \[
  \operatorname{Prim}  \bigl(H_p (\mathfrak{lgd}^0(\mathrm{pt}), \C) \bigr )
  =
  \begin{cases} \C & p = 1,3,5,\dots \\ 0 & \text{otherwise}.\end{cases}
  \]
\end{theorem}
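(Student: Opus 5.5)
We compute through the identification $\lgd^0(\mathrm{pt}) \cong \varinjlim_k \mathfrak{gl}(2^k{-}1,\C)$, with connecting maps $X \mapsto \operatorname{diag}(X,0,X)$. Lie algebra homology with trivial coefficients commutes with filtered direct limits, and so does the coproduct. Hence
\[
H_*(\lgd^0(\mathrm{pt}),\C) \cong \varinjlim_k H_*(\mathfrak{gl}(2^k{-}1,\C),\C)
\]
as coalgebras, and the primitive part of the limit is the direct limit of the primitive parts. For each $n$ we have $\mathfrak{gl}(n,\C) = \mathfrak{u}(n)\otimes\C$. By the Chevalley--Eilenberg isomorphism \eqref{eq-chevalley-eilenberg-formula} and Dynkin's description \eqref{eq-dynkin-formula}, $\Prim H_p(\mathfrak{gl}(n,\C),\C)$ is one-dimensional for $p = 1,3,\dots,2n{-}1$ and zero otherwise. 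It is spanned by the class dual to the form $\omega_p^{(n)}(X_1,\dots,X_p) = \sum_\sigma (-1)^\sigma \Tr(X_{\sigma(1)}\cdots X_{\sigma(p)})$, which we now take without the $1/n$ normalization.

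The key step is to check how the connecting maps act on these primitives. I would work dually, with primitive cohomology classes, i.e. the invariant forms. Pulling back $\omega_p^{(m)}$ with $m = 2n{+}1$ along $j\colon X\mapsto \operatorname{diag}(X,0,X)$ gives
\[
\Tr(j(X_1)\cdots j(X_p)) = 2\Tr(X_1\cdots X_p).
\]
Hence $j^*\omega_p^{(2n+1)} = 2\,\omega_p^{(n)}$, which is nonzero for every odd $p \le 2n{-}1$, and in particular for $p=1$, where the form is the trace itself. Dually, on homology, $j_*$ carries the primitive generator in degree $p$ for $\mathfrak{gl}(n)$ to a nonzero multiple of the one for $\mathfrak{gl}(2n{+}1)$. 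So in each odd degree $p$ the directed system of primitive spaces is eventually a system of one-dimensional spaces with isomorphisms between them, and its limit is $\C$. In even degrees the primitive spaces are zero at every stage, so the limit is $0$.

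The one point that needs care is that a homology class pairing nontrivially with a primitive invariant form must itself be primitive, and vice versa. Dynkin's forms are primitive in the cohomology Hopf algebra, i.e. indecomposable for the wedge product; the primitive homology classes are exactly the annihilator of the decomposables, and so they are dual to the indecomposable quotient. Since $j^*$ is an algebra map that multiplies each generator by $2$, it induces isomorphisms on indecomposables in every odd degree $p \le 2n{-}1$, and by duality $j_*$ is an isomorphism on primitives in those degrees. This is the step I would expect to be the main obstacle. Compared with Theorem~\ref{thm-homology-of-lgd-of-pt}, the change is that the trace form $\omega_1$, which vanishes on $\mathfrak{sl}$, is now nonzero and survives the limit. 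This accounts for the extra generator in degree $1$.
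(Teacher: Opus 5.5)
Your proposal is correct and follows essentially the paper's route. You identify $\lgd^0(\mathrm{pt})$ with $\varinjlim_k \mathfrak{gl}(2^k{-}1,\C)$, describe the primitives via Chevalley--Eilenberg and Dynkin's trace forms, and check that the connecting maps $X\mapsto \operatorname{diag}(X,0,X)$ carry the degree-$p$ generator to a nonzero multiple (your factor $2$ in place of the paper's $1/n$ normalization), so that every odd degree, including degree $1$ via the trace, contributes a copy of $\C$ in the limit; passing to cohomology (primitives in homology are dual to indecomposables in cohomology) is just a clean way of making precise the compatibility that the paper leaves to the phrase ``a similar computation.''
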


\section{Some derivations and endomorphisms of the Lie algebra of locally generated derivations}
\label{sec-inner-automorphisms}

The purpose of this section is to determine some sufficient conditions for an endomorphism of $\lgd(X)$ to induce the identity morphism on homology.  We shall not mention it explicitly, but exactly the same conditions apply to the subalgebra $\lgd^0(X)$ defined in Section~\ref{sec-subalgebra-fixing-a-state}.

\subsection{Inner automorphisms associated to locally nilpotent derivations}
\label{sec-automorphisms-associated-to-locally-nilpotent-derivations}
Let $X$ be a proper metric space and let $\alpha\colon X \to \N$ be any function.
Suppose given nilpotent elements 
\[
N_x\in A_{\alpha(x)}\qquad (x\in X)
\]
of uniformly bounded nilpotency order, meaning  that
\begin{equation}
    \label{eq-uniformly-bounded-nilpotency-order}
 \exists k\ge 2: N_x^k=0 \qquad \forall x\in X.
\end{equation}
For every $x\in X$, the invertible element $\exp (N_x)\in A_{\alpha(x)}$ acts by conjugation on every $A_\alpha (F)$ for which the finite subset $F\subseteq X$ includes $x$.  It also acts by conjugation on the direct limit $A_\alpha (X)$.  The automorphisms 
\[
\Ad _{t \exp(N_x)}\colon A_\alpha (X)\longrightarrow A_\alpha (X)\qquad (t\in \R,\,\,\, x\in X)
\]
commute with one another, as $t$ and $x$ vary, and if $T$ is any element of $A_\alpha(X)$, then 
\begin{equation}
    \label{eq-infinitely-many-inner-automorphisms}
\Ad _{\exp(t N_x)}(T) = T \quad \text{for all but finitely many $x\in X$}.
\end{equation}
So for any $t\in \R$ the infinite product 
\begin{equation}
    \label{eq-infinite-product-of-inner-automorphisms}
\sigma_t = \prod _{x\in X} \Ad _{\exp(t N_x)}\colon A_\alpha (X)\longrightarrow A_\alpha (X)
\end{equation}
makes sense as an automorphism of $A_\alpha(X)$.  Moreover by virtue of \eqref{eq-infinitely-many-inner-automorphisms}, the derivative 
\[
\varepsilon (T) = \frac{d}{dt}\Big \vert_{t=0} \sigma_t(T)
\]
exists inside any of the finite-dimensional algebras $A_\alpha (F)$ that contains $T$, and defines a derivation of $A_\alpha (X)$.  It is an element of the Lie algebra $\lgd_\alpha(X)$: 
\begin{equation} 
\label{eq-def-of-delta-sigma}
\varepsilon (T) = \sum_{x\in X} [N_x,T].
\end{equation}

The automorphisms $\sigma_t$ in \eqref{eq-infinite-product-of-inner-automorphisms} can be made to act as Lie algebra automorphisms 
\begin{equation}
    \label{eq-infinite-product-of-inner-automorphisms-2}
\sigma_t \colon \lgd_\alpha (X) \longrightarrow \lgd_\alpha (X)
\end{equation}
using the formula 
\[
\sigma_t (\delta)(T) = \sigma_t (\delta(\sigma_{-t}(T))\qquad (\delta \in \lgd_\alpha(X),\,\,\, T \in A_\alpha (X)),
\]
which is equivalent to the   prescription  
\[
\delta (T) = \sum_{x\in X} [H_x,T] \quad \Rightarrow \quad 
\sigma_t(\delta) (T) = \sum_{x\in X} [\sigma_t(H_x),T]
\]
(the latter formula shows that indeed $\sigma_t$ maps $\lgd_\alpha(X)$ to itself). Moreover 
\[
 \frac{d}{dt}\Big \vert_{t=0} \sigma_t(\delta) =[\varepsilon,\delta]
\]
in the sense that 
\[
 \frac{d}{dt}\Big \vert_{t=0} \sigma_t(\delta)(T) = [\varepsilon,\delta](T)\quad \forall T \in A_\alpha (X);
\]
note that the derivative may be computed in a finite-dimensional subalgebra $A_\alpha (F)$ that depends on $\delta$ and $T$.

\begin{definition} 
A proper discrete metric space $X$ is  \emph{uniformly properly discrete}  if 
    \[
    \forall R \ge 0 : \quad \sup\,\bigl \{\, \operatorname{Cardinality}\bigl ( B (x,R)\bigr ) : x \in X \,\bigr \}< \infty .
    \]
\end{definition} 

\begin{lemma} 
If $X$ is uniformly properly discrete, then the Lie algebra element 
$\varepsilon\in \lgd (X)$ in \eqref{eq-def-of-delta-sigma} is locally nilpotent.
\end{lemma}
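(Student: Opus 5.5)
The plan is to show that for every $\delta\in\lgd_\alpha(X)$ some power $\ad_\varepsilon^m(\delta)$ vanishes, where $m$ depends on $\delta$ only through its propagation $R$. Write $\delta=\sum_{y}\ad_{H_y}$ with $H_y\in A_\alpha(B_X(y,R))$. Since the $N_x$ lie in commuting tensor factors, for $T\in A_\alpha(X)$ we have
\[
[\varepsilon,\delta](T)=\sum_{y\in X}\Bigl[\sum_{x\in X}[N_x,H_y],T\Bigr],
\]
and $[N_x,H_y]=0$ unless $x\in B_X(y,R)$. So $[\varepsilon,\delta]$ is again locally generated, with generators $\sum_{x\in B(y,R)}[N_x,H_y]\in A_\alpha(B_X(y,R))$ and the same propagation $R$. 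Iterating gives
\[
\ad_\varepsilon^m(\delta)=\sum_y \ad_{K_y^{(m)}},\qquad K_y^{(m)}=\Bigl(\sum_{x\in B(y,R)}\ad_{N_x}\Bigr)^m(H_y).
\]

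The key step is to show $K^{(m)}_y=0$ for all $y$ once $m$ is large enough, uniformly in $y$. Fix $y$ and let $F=B(y,R)$. Then $A_\alpha(F)=\bigotimes_{x\in F}A_{\alpha(x)}$, and the operators $D_x=\ad_{N_x}$ for $x\in F$ commute pairwise. Each $D_x$ is nilpotent with $D_x^{2k-1}=0$, because $\ad_N=L_N-R_N$ with $L_N,R_N$ commuting and $N^k=0$. A sum of commuting nilpotent operators is nilpotent: expanding by the multinomial theorem shows
\[
\Bigl(\sum_{x\in F}D_x\Bigr)^m=0 \quad\text{once } m\ge |F|(2k-2)+1.
\]
Uniform proper discreteness gives a bound $|B(y,R)|\le C_R$ independent of $y$. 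Hence $m=C_R(2k-2)+1$ works for every $y$ simultaneously, and $\ad_\varepsilon^m(\delta)=0$.

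Finally, the direct limit over $\alpha$ causes no trouble. Any $\delta\in\lgd(X)$ lies in some $\lgd_\beta(X)$, and after enlarging we may take $\beta\ge\alpha$, with $\varepsilon$ viewed in $\lgd_\beta(X)$ via $\iota_{\beta,\alpha}$. The embedded $N_x\otimes I$ still satisfies $(N_x\otimes I)^k=0$, so the same argument applies.

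The only real obstacle is the uniformity in $y$ of the nilpotency exponent, which is exactly where the hypothesis of uniform proper discreteness, together with the uniform bound $k$ in \eqref{eq-uniformly-bounded-nilpotency-order}, is needed.
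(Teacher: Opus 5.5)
Your proof is correct and follows essentially the same route as the paper: both localize the generators of $\ad_\varepsilon^m(\delta)$ in the balls $B_X(y,R)$, and both conclude from the uniform bound on $|B_X(y,R)|$ together with the uniform nilpotency order $k$. Your use of the pairwise commuting operators $\ad_{N_x}=L_{N_x}-R_{N_x}$ with $\ad_{N_x}^{2k-1}=0$ is a cleaner packaging of the paper's expansion of the iterated commutator into terms $\pm N_I H_x N_J$; it gives the explicit exponent $C_R(2k-2)+1$, and your remark about passing to $\lgd_\beta(X)$ handles the direct limit, which the paper leaves implicit.
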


\begin{proof}
If $\delta \in \lgd (X)$, and $\delta = \sum_{x \in X} \ H_x$, as in Definition~\ref{def-locally-generated-derivation}, with $H_x \in  A_{\alpha}(B_X(x,R))$ for all $x\in X$, then 
\begin{equation}
    \label{eq-ad-epsilon-of-delta}
ad_{\varepsilon}(\delta) = 
\sum_{x,y \in X} \ ad_{[N_y,H_x]},  
\end{equation}
where the sum becomes finite when applied to any element of $A(X)$.  Now we note 
these properties:
\begin{enumerate}[\rm (i)]
\item
$[N_{y_1}, N_{y_2}]= 0$ for all $y_1,y_2\in X$.

\item $[N_y,H_x]\in A_\alpha (B_X(x,R))$ if $y\in B_X(x,R)$

\item
$
[N_y, H_x] =
0 $ if $ y \notin B_X(x,R)$.

\end{enumerate}
With these, we may rewrite  \eqref{eq-ad-epsilon-of-delta} as
\[
ad_{\varepsilon}(\delta) = \sum_{x\in X}  \sum_{y \in \text{Ball}_X(R,x)}
\ ad_{[N_y,H_x]}
\]
and then iterate:
\begin{equation}
    \label{eq-ad-epsilon-of-delta-iterated}
ad_{\varepsilon}^m(\delta) = \sum_{x\in X}  \sum_{y_1, \cdots, y_m \in  B_X(x,R)}
\ ad_{[N_{y_1} , [N_{y_2},[ \cdots, [N_{y_m} , H_x]\cdots ]]]}
\end{equation}
Now 
\begin{equation*}
[N_{y_1} , [N_{y_2},[ \cdots, [N_{y_m} , H_x]\cdots ]]]  = 
  \sum_{ I\sqcup J = \{ 1,\dots, m\}} \pm N_I H_x N_J,
\end{equation*}
where the sum is over partitions of $\{ 1,\dots, m\}$, and where 
\[
N_I = \prod _{i\in I} N_{y_i}\quad \text{and} \quad 
N_J =  \prod _{j\in J} N_{y_j} .
\]
Since $y_r\in B_X(x,R)$ for all $r$, and since 
the cardinality of $B_X(x,R)$ is bounded
 independently  of $x \in X$,
 it follows that if $m$ is sufficiently large, then independently of the choice of  $x$,
  at least one of $N_I$ or $ N_J$ must contain a power
 $N_{y_r}^k$, so that by \eqref{eq-uniformly-bounded-nilpotency-order}, $N_I=0$ or $N_J=0$. So for large enough $m$, every term on the right-hand side of  \eqref{eq-ad-epsilon-of-delta-iterated} is zero.
 \end{proof}

From the above and Lemma~\ref{lem-inner-automorphisms-from-locally-nilpotent-elements} we obtain:

\begin{lemma}
    \label{lem-trivial-action-of-some-inner-autos}
    If the metric space $X$ is uniformly properly discrete, 
    then under the assumption \eqref{eq-uniformly-bounded-nilpotency-order}, the automorphisms $\sigma_t$ in \eqref{eq-infinite-product-of-inner-automorphisms-2} above induce the identity map on $H_*(\lgd _\alpha (X), \C)$. \qed
\end{lemma}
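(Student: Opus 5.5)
The plan is to reduce the statement to Lemma~\ref{lem-inner-automorphisms-from-locally-nilpotent-elements}, applied to the Lie algebra $\mathfrak{g}=\lgd_\alpha(X)$ and the element $Y=t\varepsilon$. Here $\varepsilon=\sum_x \ad_{N_x}$ is the derivation in \eqref{eq-def-of-delta-sigma}.

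The first step is to recall that $\varepsilon$ lies in $\lgd_\alpha(X)$ itself. Indeed, each $N_x$ lies in $A_{\alpha(x)}\subseteq A_\alpha(B_X(x,R))$ for every $R>0$, so $\varepsilon\in\lgd_{\alpha,R}(X)$. By the preceding lemma, the uniform proper discreteness of $X$ and the uniform nilpotency bound \eqref{eq-uniformly-bounded-nilpotency-order} make $\ad_\varepsilon$ locally nilpotent on $\lgd_\alpha(X)$. That proof was phrased for $\lgd(X)$, but it only uses the fixed $\alpha$ and bounds the nilpotency length in terms of $R$, $k$ and $\sup_x|B_X(x,R)|$. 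So it applies verbatim inside $\lgd_\alpha(X)$, and equally to $t\varepsilon$ for every $t\in\R$.

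The second step is to identify the automorphism $\sigma_t$ of \eqref{eq-infinite-product-of-inner-automorphisms-2} with $\exp(t\ad_\varepsilon)$. Fix $\delta\in\lgd_\alpha(X)$. Both $t\mapsto\sigma_t(\delta)$ and $t\mapsto\exp(t\ad_\varepsilon)(\delta)$ are one-parameter families of derivations. We compare them by evaluating at an arbitrary $T\in A_\alpha(F)$; all computations then take place in a finite-dimensional algebra $A_\alpha(F')$, where $F'$ contains a large enough penumbra of $F$.

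\begin{itemize}
\item The family $\sigma_t$ is a group in $t$, since the factors $\Ad_{\exp(tN_x)}$ commute, and its derivative at $0$ is $\ad_\varepsilon$. Hence $\frac{d}{dt}\sigma_t(\delta)=[\varepsilon,\sigma_t(\delta)]$ for all $t$, using $\sigma_s\sigma_t=\sigma_{s+t}$ and the fact that $\sigma_t(\varepsilon)=\varepsilon$ because the $N_x$ commute.
\item The family $\exp(t\ad_\varepsilon)(\delta)$ satisfies the same linear ODE with the same initial value $\delta$. It is polynomial in $t$ by local nilpotence.
\end{itemize}

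Evaluated on $T$, both sides are solutions of the same linear ODE in a finite-dimensional space, so they coincide by uniqueness. Alternatively, one can compare them directly: by the prescription $\sigma_t(\delta)=\sum_x\ad_{\sigma_t(H_x)}$, we have $\sigma_t(H_x)=\prod_{y\in B(x,R)}\Ad_{\exp(tN_y)}(H_x)=\exp\bigl(t\sum_{y}\ad_{N_y}\bigr)(H_x)$, a finite computation that matches the terms of \eqref{eq-ad-epsilon-of-delta-iterated}.

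With $\sigma_t=\exp(\ad_{t\varepsilon})$ established, Lemma~\ref{lem-inner-automorphisms-from-locally-nilpotent-elements} gives that $\sigma_t$ induces the identity on $H_*(\lgd_\alpha(X),\C)$. The main obstacle is the bookkeeping in this identification: ensuring that each sum is finite when applied to a fixed $T$, and that differentiation commutes with the infinite product. Both follow from \eqref{eq-infinitely-many-inner-automorphisms} and the locality of the $H_x$.
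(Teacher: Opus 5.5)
Your proposal is correct and follows the paper's argument: $\varepsilon\in\lgd_\alpha(X)$ is locally nilpotent by the preceding lemma, $\sigma_t=\exp(t\ad_\varepsilon)$, and Lemma~\ref{lem-inner-automorphisms-from-locally-nilpotent-elements} then finishes the proof; you also supply the identification of $\sigma_t$ with $\exp(t\ad_\varepsilon)$, which the paper leaves implicit. Of your two justifications of that identification, the direct one via $\sigma_t(H_x)=\exp\bigl(t\sum_{y\in B_X(x,R)}\ad_{N_y}\bigr)(H_x)$ is the cleaner; the ODE version does not close up at a single $T$, because $[\varepsilon,D](T)$ involves $D(\varepsilon(T))$, so you should run it on the finite-dimensional space of linear maps $A_\alpha(F)\to A_\alpha(F')$, using that $\varepsilon$ preserves $A_\alpha(F)$.
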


\subsection{Flip automorphisms}
\label{sec-flip-automorphisms}
Let $X$ be a uniformly properly discrete metric space, and let $\alpha\colon X \to \N$ be any function. Denote by 
\[
\sigma_x\colon A_{2\alpha(x)} \longrightarrow A_{2\alpha(x)}
\]
the involutive algebra automorphism  that exchanges the first $\alpha(x)$ tensor factors in $A_{2\alpha(x)}$ with the last $\alpha (x)$ tensor factors: 
\begin{multline*}
\sigma_x \colon T_1\otimes \cdots \otimes T_{\alpha(x)}\otimes T_{\alpha(x)+1}\otimes \cdots \otimes T_{2\alpha(x)}
\\
\longmapsto 
 T_{\alpha(x)+1}\otimes \cdots \otimes T_{2\alpha(x)} \otimes  T_1\otimes \cdots \otimes T_{\alpha(x)}.
\end{multline*}
We may regard each $\sigma_x$ as an automorphism of $A_{2 \alpha }(X)$, acting on only the $x$-factor of the infinite tensor product.  When viewed in this way,   all the $\sigma_x$ are involutive algebra automorphisms, they all commute with one another, and all but finitely many act trivially on any given $T \in A_{2 \alpha}(X)$. So we may unambiguously form the combined automorphism
\[
\sigma = \prod_{x\in X} \sigma_x \colon A_{2 \alpha }(X) \longrightarrow A_{2 \alpha }(X).
\]
There is an induced Lie algebra automorphism
\begin{equation}
    \label{eq-flip-on-lgd}
\sigma \colon \lgd_{2\alpha} (X) \longrightarrow \lgd _{2 \alpha} (X)
\end{equation}
that is defined by either of the equivalent formulas 
\[
\sigma(\delta)(T) = \sigma(\delta(\sigma^{-1}(T))
\]
or 
\begin{equation}
    \label{eq-def-of-sigma-of-delta}
\sigma(\delta)(T) = \sum_{x\in X} [ \sigma(H_x), T]
\quad \text{if} \quad \delta(T) =  \sum_{x\in X} [  H_x, T].
\end{equation}

\begin{theorem}
\label{thm-flip-is-identity-on-homology}
Let $X$ be a uniformly properly discrete metric space, and let $\alpha\colon X \to \N$ be any function such that $\alpha (x)\ge 2 $ for all $x\in X$.
    The flip automorphism $\sigma$ in \eqref{eq-flip-on-lgd} induces the identity map on homology: 
    \[
    \sigma_*= \mathrm{id} \colon H_*(\lgd_{2\alpha}(X),\C) \longrightarrow H_*(\lgd_{2\alpha}(X),\C) .
    \]
\end{theorem}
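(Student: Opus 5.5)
Since each $\sigma_x$ is a permutation of tensor factors, my plan is to write $\sigma$ as a finite product of automorphisms of the form $\prod_x \Ad_{\exp(tN_x)}$ with uniformly nilpotent $N_x$. Lemma~\ref{lem-trivial-action-of-some-inner-autos} shows that each such factor acts trivially on homology.

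First, inside a single factor $A_{2\alpha(x)} = A_{\alpha(x)}\otimes A_{\alpha(x)}$, the flip is inner. It equals $\Ad_{U_x}$, where $U_x$ is the swap unitary. Writing $A_{\alpha(x)}=M_2(\C)^{\otimes \alpha(x)}$, the swap of the two blocks is a product of $\alpha(x)$ commuting transpositions of single $M_2(\C)$ factors. Each of these is conjugation by a swap element $P\in M_2(\C)\otimes M_2(\C)$, placed in positions $i$ and $\alpha(x)+i$.

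So it suffices to handle the flip of one pair of $M_2(\C)$ factors, uniformly in $x$. The key observation is that $P$ is invertible in $M_4(\C)$, and, up to a scalar (scalars act trivially under $\Ad$), $P$ is a product of a bounded number $m$ of unipotent matrices $\exp(N^{(j)})$ with $(N^{(j)})^2=0$. This holds because $SL_4(\C)$ is generated by elementary matrices $I+E_{ab}$, and a Bruhat-type factorization gives a fixed bound on their number. We must adjust the determinant of $P$, which is $-1$, by a scalar $\lambda$ with $\lambda^4=-1$. Hence $\Ad_P = \Ad_{\exp(N^{(1)})}\cdots \Ad_{\exp(N^{(m)})}$ with $N^{(j)}\in M_2(\C)\otimes M_2(\C)$ fixed, independent of everything.

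Next, I group the pairs. For a fixed pair index $i$, I take for each $x$ with $\alpha(x)\ge i$ the element $N^{(j)}_{x,i}$ living in positions $(i,\alpha(x)+i)$ of $A_{2\alpha(x)}$. These elements commute with each other across different $x$, and each squares to zero, so the product $\prod_x \Ad_{\exp(N^{(j)}_{x,i})}$ is of the type in Section~\ref{sec-automorphisms-associated-to-locally-nilpotent-derivations}.

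The main obstacle is that $\alpha$ may be unbounded, so there are unboundedly many $i$. I cannot compose infinitely many such automorphisms and then apply the lemma one at a time. To avoid this, for each $j$ I combine all pairs $i$ simultaneously: I set $N_x^{(j)}=\sum_i N^{(j)}_{x,i}$. The summands commute, since they live on disjoint factors, and each squares to zero. Therefore $\exp(N_x^{(j)})=\prod_i \exp(N^{(j)}_{x,i})$.

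The difficulty is that $N_x^{(j)}$ is nilpotent only of order about $\alpha(x)+1$, which is not uniformly bounded. To repair this, I would first reorder the factorization to deal with a single pair at a time. Alternatively, I would choose the $N^{(j)}$ so that the products $N^{(j)}_{x,i}N^{(j)}_{x,i'}$ vanish; but for disjoint positions this product is a tensor product and does not vanish, so that approach fails.

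My fallback plan is to revisit the local nilpotence argument itself rather than rely on a uniform nilpotency bound. In the proof that $\ad_\varepsilon$ is locally nilpotent, only the factors of $N_x$ lying in the support of $H_x$ matter, so effective nilpotency should be governed by $\alpha$ restricted to a ball $B_X(x,R)$. However, these restrictions may still be unbounded. It would be better to note that for a given $\delta$, $H_x$ lies in $A_{\beta}$ for some $\beta$ relative to which $\delta$ is defined, with $2\beta\le 2\alpha$. Positions $i$ beyond those used by $H_x$ commute with $H_x$, so only a bounded set of relevant factors contributes to each commutator. Making this precise, so that Lemma~\ref{lem-trivial-action-of-some-inner-autos} applies with an $\alpha$-independent order, is the step I expect to be hardest. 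Once it is done, $\sigma$ is a product of $m$ automorphisms, each inducing the identity on homology, and so $\sigma_*=\mathrm{id}$.
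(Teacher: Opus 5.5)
Your proposal has a genuine gap, and it is the one you flag yourself. After splitting the flip of $A_{\alpha(x)}\otimes A_{\alpha(x)}$ into the $\alpha(x)$ single-pair swaps, the only way to end up with finitely many global automorphisms is to set $N^{(j)}_x=\sum_i N^{(j)}_{x,i}$. This element has nilpotency index $\alpha(x)+1$, which is unbounded when $\alpha$ is.

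The fallback you sketch does not repair this. An element $\delta\in\lgd_{2\alpha}(X)$ has $H_x\in A_{2\alpha}(B_X(x,R))$, and $H_x$ may involve all $2\alpha(y)$ tensor factors at each $y$ in the ball. There is no smaller $\beta$ that bounds the number of relevant factors uniformly in $x$.

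In fact, for unbounded $\alpha$, the derivation $\ad_\varepsilon$ is simply not locally nilpotent on $\lgd_{2\alpha}(X)$. Take $R=0$, so that $\ad_\varepsilon^m(\sum_x\ad_{H_x})=\sum_x\ad_{\ad_{N_x}^m(H_x)}$. Choose $H_x$ with $N_x^{\alpha(x)}H_xN_x^{\alpha(x)}\neq 0$. Then $\ad_{N_x}^m(H_x)$ is non-scalar for every $m\le 2\alpha(x)$, so $\ad_\varepsilon^m\delta\neq0$ for every $m$. Hence Lemma~\ref{lem-trivial-action-of-some-inner-autos}, and the polynomial chain homotopy behind it, is unavailable.

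The missing idea is to factor the whole swap at once rather than pair by pair. Write $n=\alpha(x)$. The flip of $A_{2n}\cong M_{2^{2n}}(\C)$ is $\Ad_S$, where $S$ is the self-adjoint unitary exchanging the two factors of $\C^{2^n}\otimes\C^{2^n}$. Its $-1$-eigenspace, the antisymmetric tensors, has dimension $2^{n-1}(2^n-1)$. This is even exactly when $n\ge 2$, which is where the hypothesis $\alpha(x)\ge 2$ enters. Split that eigenspace into $2$-dimensional blocks and use, in every block simultaneously,
\[
-I_2=\left(\begin{bmatrix}1&1\\0&1\end{bmatrix}\begin{bmatrix}1&0\\-1&1\end{bmatrix}\begin{bmatrix}1&1\\0&1\end{bmatrix}\right)^2 .
\]
This gives $S=\exp(N_1)\cdots\exp(N_6)$, where each $N_j$ is block-diagonal with square-zero blocks, hence $N_j^2=0$, for every $n$.

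So the number of factors (six) and the nilpotency order (two) are independent of $\alpha(x)$. The global flip $\sigma$ is then a composition of six automorphisms $\prod_x\Ad_{\exp(N_{j,x})}$ with $N_{j,x}^2=0$, and Lemma~\ref{lem-trivial-action-of-some-inner-autos} applies directly.

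Your factorization of the single swap $P\in M_4(\C)$ (with the scalar adjustment) is correct as far as it goes. But a fixed factorization in $M_4(\C)$, once tensored up $\alpha(x)$ times, cannot supply the uniformity in $n$ that the argument needs.
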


\begin{proof}
Fix an isomorphism of algebras $A_{2n} \cong M_{2^{2n}}(\C)$ so as to be able to consider the flip automorphism
\[
\sigma_x \colon A_{2\alpha(x)} \longrightarrow A_{2\alpha(x)}
\]
as an automorphism
\[
\sigma_n \colon M_{2^{2n}}(\C) \longrightarrow M_{2^{2n}}(\C) ,
\]
where $n=\alpha(x)$.  The automorphism $\sigma_n$ is given by conjugation with a self-adjoint unitary matrix $S$ that has eigenvalues $\lambda = \pm 1$ with multiplicities
\[
\begin{cases} 
2^{n-1}(2^{n}{+}1) & \lambda =1
\\
2^{n-1}(2^{n}{-}1) & \lambda =-1
\end{cases}
\]
Assuming that $n\ge 2$ both multiplicities, and in particular the multiplicity for $\lambda = -1$, are even.  Decomposing the eigenspace for $\lambda = -1$ into a direct sum of $2$-dimensional subspaces, and using the matrix factorization
\[
\begin{bmatrix} 
-1 & 0 
\\
0 & -1 
\end{bmatrix}
=
\left ( \begin{bmatrix} 
1 & 1
\\
0& 1
\end{bmatrix}
\cdot 
\begin{bmatrix} 
\phantom{.}1 & \phantom{.}0
\\
-1 & \phantom{.}1
\end{bmatrix}
\cdot 
\begin{bmatrix} 
1 & 1
\\
0 &1
\end{bmatrix}
\right )^2 ,
\]
 we  may write 
\[
S = \exp (N_{1})\exp (N_{2})\cdots \exp (N_{6})
\]
where each $N_{j}\in M_{2^{2n}}(\C)$ has square zero.  

It follows that each flip automorphism $\sigma_x$  has the form 
\[
\sigma_x = \Ad_{\exp (N_{1,x})}\Ad_{\exp (N_{2,x})}\cdots \Ad_{\exp (N_{6,x})}
\]
where all $N_{j,x}\in A_{2 \alpha (x)}$ have square zero.  We find therefore that the global flip automorphism 
\[
\sigma\colon A_{2\alpha} (X) \longrightarrow A_{2\alpha}(X)
\]
may be written as a composition of $6$ automorphisms, each of precisely the form considered in Section~\ref{sec-automorphisms-associated-to-locally-nilpotent-derivations}.  The theorem therefore follows from Lemma~\ref{lem-trivial-action-of-some-inner-autos}.
\end{proof}

\subsection{Triviality of the flip automorphism on the homology of ideals}
\label{sec-triviality-of-lgd-action-on-homology}

\begin{theorem}
\label{thm-flip-is-identity-on-homology-in-realtive-case}
Let $Z$ be a uniformly properly discrete metric space, let $Y$ be any subspace, and let $\alpha\colon Z \to \N$ be any function such that $\alpha (x)\ge 2 $ for all $x\in Z$.
    The flip automorphism $\sigma$ in \eqref{eq-flip-on-lgd} induces the identity map on homology: 
    \[
    \sigma_*= \mathrm{id} \colon H_*(\lgd_{2\alpha}(Z,Y),\C) \longrightarrow H_*(\lgd_{2\alpha}(Z,Y),\C) .
    \]
\end{theorem}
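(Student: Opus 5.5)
The plan is to repeat the proof of Theorem~\ref{thm-flip-is-identity-on-homology}, checking that each step stays inside the ideal $\lgd_{2\alpha}(Z,Y)$. First I note that the flip $\sigma$ preserves the ideal. If $\delta = \sum_{y\in Y}\ad_{H_y}$ with $H_y\in A_{2\alpha}(B_Z(y,R))$, then formula \eqref{eq-def-of-sigma-of-delta} gives $\sigma(\delta)=\sum_{y\in Y}\ad_{\sigma(H_y)}$. Since each $\sigma_x$ acts only on the $x$-factor, $\sigma(H_y)\in A_{2\alpha}(B_Z(y,R))$. So $\sigma$ restricts to an automorphism of $\lgd_{2\alpha}(Z,Y)$, and $\sigma_*$ makes sense on its homology.

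Next, as in the proof of Theorem~\ref{thm-flip-is-identity-on-homology}, I write $\sigma$ as a composition of six automorphisms $\sigma^{(j)}=\prod_{x\in Z}\Ad_{\exp(N_{j,x})}$ with $N_{j,x}^2=0$. Each $\sigma^{(j)}$ also preserves the ideal, by the same locality argument. It therefore suffices to show that each $\sigma^{(j)}$ acts as the identity on $H_*(\lgd_{2\alpha}(Z,Y),\C)$. For this I want to apply Lemma~\ref{lem-inner-automorphisms-from-locally-nilpotent-elements}, or rather its chain homotopy \eqref{eq-chain-homotopy-for-inner-automorphisms}, to the Lie algebra $\mathfrak{h}=\lgd_{2\alpha}(Z,Y)$.

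The main obstacle is that the generator $\varepsilon_j=\sum_{x\in Z}\ad_{N_{j,x}}$ lies in $\lgd_{2\alpha}(Z)$ but generally not in $\mathfrak{h}$. So $\sigma^{(j)}$ is not obviously an inner automorphism of $\mathfrak{h}$. The first fix I would try is to split $N_{j,x}$ according to the location of $x$. Fix a cycle $c\in\Lambda^p(\mathfrak{h})$. It involves finitely many elements, each supported near $Y$ with some common radius $R$, so $c\in\Lambda^p(\lgd_{2\alpha}(Z,Y)_R)$, where the subscript $R$ means generators $H_y\in A_{2\alpha}(B_Z(y,R))$ with $y\in Y$. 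Factors $N_{j,x}$ with $x\notin \operatorname{Pen}_Z(Y;R)$ commute with all these $H_y$, so they act trivially on $c$. Hence $\sigma^{(j)}(c)=\prod_{x\in \operatorname{Pen}_Z(Y;R)}\Ad_{\exp(N_{j,x})}(c)$. The truncated derivation $\varepsilon_j'=\sum_{x\in \operatorname{Pen}_Z(Y;R)}\ad_{N_{j,x}}$ can be reindexed over $Y$: partition $\operatorname{Pen}_Z(Y;R)$ into finite sets $F_y\subseteq B_Z(y,R+1)$ for $y\in Y$, as in the proof of Lemma~\ref{lem-technical-result-for-excision}. This shows $\varepsilon_j'\in\mathfrak{h}$.

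The truncated $\varepsilon_j'$ is locally nilpotent on $\mathfrak{h}$ by the same argument as in the lemma preceding Lemma~\ref{lem-trivial-action-of-some-inner-autos}, using uniform proper discreteness. Formula \eqref{eq-chain-homotopy-for-inner-automorphisms} in $\Lambda^*(\mathfrak{h})$ with $Y=\varepsilon_j'$ then shows that $\sigma^{(j)}(c)=\exp(\ad_{\varepsilon_j'})(c)$ is homologous to $c$ in $\mathfrak{h}$. The truncation depends on $c$, but that is harmless because the homotopy is applied class by class. Composing the six steps gives $\sigma_*=\mathrm{id}$ on $H_*(\lgd_{2\alpha}(Z,Y),\C)$. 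The one point I would check carefully is that $\sigma^{(j)}$ and the truncated exponential agree on all the elements appearing in $c$, which follows from Lemma~\ref{lem-local-factors-for-disjoint-sets-commute}.
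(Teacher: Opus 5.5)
Your proof is correct, but it is organized differently from the paper's, which is a two-line reduction. By Lemma~\ref{lem-lgd-ideal-is-increasing-union}, $\lgd_{2\alpha}(Z,Y)$ is the increasing union of the subalgebras $\lgd_{2\alpha}(\operatorname{Pen}_Z(Y;R))$. The flip preserves each of these and restricts to the flip of the space $\operatorname{Pen}_Z(Y;R)$, which is again uniformly properly discrete with $\alpha\ge 2$. So Theorem~\ref{thm-flip-is-identity-on-homology} applies to each subalgebra, and one passes to the direct limit, since homology commutes with increasing unions.

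Your truncated generator $\varepsilon_j'$ is precisely the generator that the proof of Theorem~\ref{thm-flip-is-identity-on-homology} uses for the space $\operatorname{Pen}_Z(Y;R)$. So the underlying idea is the same: a cycle only sees a penumbra of $Y$, and over that penumbra the flip is a product of exponentials of locally nilpotent inner derivations. The difference is where the chain homotopy \eqref{eq-chain-homotopy-for-inner-automorphisms} is run. You run it in $\Lambda^*(\mathfrak{h})$, which obliges you to check by reindexing that $\varepsilon_j'\in\mathfrak{h}$ and that it is locally nilpotent there. The paper runs it inside $\Lambda^*(\lgd_{2\alpha}(\operatorname{Pen}_Z(Y;R)))$, where both facts are already covered by the absolute case, and then pushes the resulting boundary into $\Lambda^*(\mathfrak{h})$ along the inclusion.

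The paper's route is shorter and reuses the absolute theorem verbatim. Yours is self-contained and avoids the direct-limit step, at the cost of redoing the locality bookkeeping.
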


\begin{proof}
    Lemma~\ref{lem-lgd-ideal-is-increasing-union} reduces the theorem to the same assertion for the Lie algebras $ \lgd _\alpha (\operatorname{Pen}_Z(Y,R))$, proved in the previous section.
\end{proof}

In Section~\ref{sec-homology-of-lgd-for-n-space} we shall also need the following assertion about the ideals $\lgd(Z,Y)$,  when we apply our primitive element theorem to Lie algebras of locally generated derivations.

\begin{theorem}
\label{thm-trivial-action-of-lgd-on-homology}
    If $Z$ is any proper discrete metric space,  if $Y$ is a subspace of $Z$, and if $\alpha\colon Z{\to} \N$ is any function, then $\lgd_\alpha(Z)$ acts trivially on $H_*(\lgd_\alpha (Z,Y),\C)$.
\end{theorem}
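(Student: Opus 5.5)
The plan is to use the Cartan-type formula \eqref{eq-cartan-formula} for the ideal. Fix $\delta\in\lgd_\alpha(Z)$ and write $\delta=\sum_{z\in Z}\ad_{H_z}$ with $H_z\in A_\alpha(B_Z(z,R))$. The derivation $\ad_\delta$ preserves the ideal $\mathfrak h=\lgd_\alpha(Z,Y)$, and it acts on $H_*(\mathfrak h,\C)$ by the formula \eqref{eq-action-of-differential-on-c-e-complex}. If $\delta$ were in $\mathfrak h$, then \eqref{eq-cartan-formula} would give a chain homotopy from $\ad_\delta$ to zero. In general $\delta$ is not in $\mathfrak h$, so the task is to replace $\delta$ by an element of $\mathfrak h$ that acts identically on any given cycle.

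The key step uses Lemma~\ref{lem-lgd-ideal-is-increasing-union}: $\mathfrak h=\bigcup_S\lgd_\alpha(\operatorname{Pen}_Z(Y;S))$, and homology commutes with this directed union. So it suffices to treat a cycle $c\in\Lambda^p\lgd_\alpha(\operatorname{Pen}_Z(Y;S))$ for a fixed $S$. Put
\[
\delta_{S'}=\sum_{z\in\operatorname{Pen}_Z(Y;S')}\ad_{H_z},
\]
which lies in $\lgd_\alpha(Z,Y)$. Choose $S'\ge S+2R$, or similar. Then for any $\varepsilon\in\lgd_\alpha(\operatorname{Pen}_Z(Y;S))$ I expect $[\delta,\varepsilon]=[\delta_{S'},\varepsilon]$. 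The reason is that each term $\ad_{H_z}$ with $z$ far from $\operatorname{Pen}_Z(Y;S)$ commutes with the generators of $\varepsilon$, by Lemma~\ref{lem-local-factors-for-disjoint-sets-commute}. One must note that $\varepsilon$ involves only generators $K_w$ supported near $\operatorname{Pen}_Z(Y;S)$, and this holds because $\varepsilon$ can be written as a local sum supported in the penumbra.

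Granting this, $\ad_\delta c=\ad_{\delta_{S'}}c$ on the chain level. Applying \eqref{eq-cartan-formula} with $Y=\delta_{S'}$ inside the Lie algebra $\mathfrak h$ gives $\ad_{\delta_{S'}}c=-b'(\delta_{S'}\wedge c)$ when $c$ is a cycle. This is a boundary in $\Lambda^*\mathfrak h$, so $\delta$ acts by zero on the class of $c$.

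The main obstacle is the commutation claim $[\delta-\delta_{S'},\varepsilon]=0$. The subtlety is that $\varepsilon\in\lgd_\alpha(\operatorname{Pen}_Z(Y;S))$ is defined as a derivation. I need a presentation $\varepsilon=\sum\ad_{K_w}$ with $K_w$ supported in a bounded neighbourhood of $\operatorname{Pen}_Z(Y;S)$. Such a presentation comes from the inclusion \eqref{eq-functoriality-of-lgd}. With it, $[\ad_{H_z},\ad_{K_w}]=\ad_{[H_z,K_w]}$ vanishes when the supports are disjoint. That happens whenever $z\notin\operatorname{Pen}_Z(Y;S+2R')$, where $R'$ bounds the support radius of the $K_w$. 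A small extra check is that the commutator of derivations is computed termwise even though the sums are infinite; this is fine because each sum is locally finite on any $T\in A_\alpha(Z)$.
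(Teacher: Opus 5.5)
Your proposal is correct and follows the paper's proof. There, $\delta$ is split via Lemma~\ref{lem-sum-decomposition-of-lgd} into two pieces, exactly your $\delta=(\delta-\delta_{S'})+\delta_{S'}$: one piece is supported away from the penumbra carrying the cycle, so it commutes with every factor of the cycle and kills it on the nose; the other lies in a slightly larger penumbra algebra inside the ideal, so by the Cartan formula it acts as an inner derivation.

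One small slip in your last paragraph. The $K_w$ already lie in $A_\alpha(\operatorname{Pen}_Z(Y;S))$, so the vanishing of $[H_z,K_w]$ is governed by the radius $R$ of the $H_z$, not by $R'$. The condition you need is $S'\ge S+R$, which your earlier choice $S'\ge S+2R$ already guarantees.
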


To prove this we shall use the following:

\begin{lemma}
\label{lem-sum-decomposition-of-lgd}
 If $Z$ is any proper discrete metric space,  if $V$ and $W$  are subspace of $Z$ with $Z = V \cup W  $, and if $\alpha\colon Z{\to} \N$ is any function, and if $S> 0$ then
    \[
    \lgd_{\alpha, S}(Z) = \lgd_{\alpha,S}(V) + \lgd_{\alpha,S}(\operatorname{Pen}_Z(W;2S))
    \]
    \textup{(}notation from Definition~\textup{\ref{def-locally-generated-derivation}}\textup{)}.
\end{lemma}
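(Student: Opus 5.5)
The plan is to split a given $\delta\in\lgd_{\alpha,S}(Z)$ according to where its local generators sit, and then absorb the terms that overhang $V$. Write
\[
\delta=\sum_{z\in Z}\ad_{H_z},\qquad H_z\in A_\alpha(B_Z(z,S)),
\]
and separate the sum into the part indexed by $z\in V\setminus\operatorname{Pen}_Z(W;S)$ and the part indexed by $z\in\operatorname{Pen}_Z(W;S)$. Since $Z=V\cup W$, every $z$ lies in one of these two sets; if necessary I will first partition $Z$ so that each $z$ is counted exactly once.

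For the second part, each index $z$ lies in $\operatorname{Pen}_Z(W;S)$, and $B_Z(z,S)\subseteq\operatorname{Pen}_Z(W;2S)$. Hence each $H_z$ already lies in $A_\alpha(\operatorname{Pen}_Z(W;2S))$. The ball of radius $S$ around $z$ inside the subspace $P=\operatorname{Pen}_Z(W;2S)$ is just $B_Z(z,S)\cap P=B_Z(z,S)$. So this part is an element of $\lgd_{\alpha,S}(P)$, viewed inside $\lgd_\alpha(Z)$ through the inclusion \eqref{eq-functoriality-of-lgd}. That inclusion is compatible with the formula $\sum\ad_{H_z}$, because $A_\alpha(P)\otimes A_\alpha(Z\setminus P)\cong A_\alpha(Z)$.

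For the first part, take $z\in V$ with $d(z,W)>S$. Every point of $B_Z(z,S)$ is then outside $W$, hence lies in $V$. Therefore $B_Z(z,S)=B_V(z,S)$ and $H_z\in A_\alpha(B_V(z,S))$. So this part lies in $\lgd_{\alpha,S}(V)$, again using the inclusion \eqref{eq-functoriality-of-lgd}.

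The main point needing care is the bookkeeping of radii: the argument depends on $\operatorname{Pen}$ being defined with $\le$ while balls are open. Under these conventions, the point $z\in V\setminus\operatorname{Pen}_Z(W;S)$ satisfies $d(z,w)>S$ for all $w\in W$, so its open ball avoids $W$. The ball around $z\in\operatorname{Pen}_Z(W;S)$ is contained in $\operatorname{Pen}_Z(W;2S)$ by the triangle inequality. The reverse inclusion ``$\supseteq$'' is immediate, since both summands lie in $\lgd_{\alpha,S}(Z)$ by definition.
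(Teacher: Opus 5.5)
Your proposal is correct and matches the paper's proof: it splits $\delta=\sum_{z\in Z}\ad_{H_z}$ into the terms with $z\notin\operatorname{Pen}_Z(W;S)$, which lie in $\lgd_{\alpha,S}(V)$, and the terms with $z\in\operatorname{Pen}_Z(W;S)$, which lie in $\lgd_{\alpha,S}(\operatorname{Pen}_Z(W;2S))$, just as you do. You also check that the open balls in the first group avoid $W$, that the balls in the second group stay inside the $2S$-penumbra by the triangle inequality, and that the inclusion of Lie algebras respects the generator formula; the paper uses these facts without spelling them out.
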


\begin{proof}
    If $\delta = \sum_{z\in Z} \ad_{H_z}$ is a derivation in $\lgd_{\alpha, S}(Z)$, and if we write 
    \[
    \delta = \sum_{z\notin \operatorname{Pen}_Z(W;S)} \ad_{H_z} + \sum_{z\in  \operatorname{Pen}_Z(W;S)} \ad_{H_z},
    \]
    then the first sum is an element of  $\lgd_{\alpha,S}(V)$ and the second is an element of  $\lgd_{\alpha,S}(\operatorname{Pen}_Z(W;2S))$.
\end{proof}

\begin{proof}[Proof of Theorem~\ref{thm-trivial-action-of-lgd-on-homology}]
Let $\delta\in \lgd_{\alpha, S}(Z)$. In view of Lemma~\ref{lem-lgd-ideal-is-increasing-union}, it suffices to show that $\delta$ acts trivially on any class  $[c]\in H_*(\lgd_{\alpha}(Z,Y))$ that is represented by a cycle $c$ in any of the subcomplexes
\[
\Lambda^*(\lgd_{\alpha}(\operatorname{Pen}_Z(Y;R)))
\subseteq \Lambda^*(\lgd_{\alpha}(Z,Y))\qquad (R>0).
\]
Having fixed one of these subcomplexes, write 
\[
Z = \bigl (Z \setminus \operatorname{Pen}_Z(Y;R)\bigr) \cup
\operatorname{Pen}_Z(Y;R)
\]
and apply Lemma~\ref{lem-sum-decomposition-of-lgd} to obtain a decomposition
\[
\delta = \delta_1 + \delta_2 \in  \lgd_{\alpha,S}(Z \setminus \operatorname{Pen}_Z(Y;R)) + \lgd_{\alpha,S}(\operatorname{Pen}_Z(Y;R{+}2S)).
\]
The derivation $\delta_1$ acts trivially on the  $\Lambda^*(\lgd_{\alpha}(\operatorname{Pen}_Z(Y;R)))$ and in particular on the cycle  $c$. The derivation $\delta_2$ acts as an inner derivation on the intermediate subcomplex
\[
\Lambda^*(\lgd_{\alpha}(\operatorname{Pen}_Z(Y;R)))
\subseteq \Lambda^*(\lgd_{\alpha}(\operatorname{Pen}_Z(Y;R{+}2S)))
\subseteq \Lambda^*(\lgd_{\alpha}(Z,Y))
\]
and so it  acts trivially on the homology class $[c]$.
\end{proof}

\subsection{Using the flip automorphism} 
\label{sec-using-the-flip-automorphism}
We shall show in this section how Theorem~\ref{thm-flip-is-identity-on-homology} may be used to compute the action of many more Lie algebra morphisms on homology.  We shall formulate our result for  $\lgd_\alpha (X)$, but it applies equally well to the ideals $\lgd_\alpha (X,Y)$.

Let $X$ be a uniformly properly discrete metric space,  let $\alpha, \beta \colon X \to \N$ be  functions, and let 
\begin{equation*}
\tau \colon A_\alpha (X) \longrightarrow A_\beta (X) 
\end{equation*}
be any unital algebra morphism with the following property:
\begin{equation}
\label{eq-local-property-of-algebra-morphism-tau}
    \exists R > 0:\quad \tau [ A_\alpha (F)]\subseteq  A_\beta (\operatorname{Pen}_X(F;R)) \qquad \forall F \subseteq X
\end{equation}
(we shall use this property for all finite $F$, but this implies the property for all $F$, finite or not).  Associated to the Lie algebra morphism $\tau$ there is a Lie algebra morphism
\begin{equation}
\label{eq-lie-algebra-morphism-tau}
\tau \colon \lgd _\alpha (X) \longrightarrow \lgd _\beta (X)
\end{equation}
that is characterized by the formula 
\begin{equation}
\label{eq-definition-of-lie-algebra-morphism-tau}
\tau(\delta)(T) = \sum_{x\in X} [ \tau(H_x), T]\qquad \forall T \in A_\beta (X),
\end{equation}
 when $\delta(T) = \sum_{x\in X} [ H_x, T]$ for all $T\in A_\alpha(X)$, as in Definition~\ref{def-locally-generated-derivation}.  

 \begin{theorem}
 \label{thm-using-the-flip}
Let $X$ be a uniformly properly discrete metric space,  let $\alpha, \beta \colon X \to \N$ be  functions, and let $\tau \colon A_\alpha (X) \to A_\beta (X) $ be any unital algebra morphism satisfying \eqref{eq-local-property-of-algebra-morphism-tau}.  The composition 
\[
\xymatrix{
 \lgd_\alpha(X)  \ar[r]^{\tau} &  \lgd_\beta(X) \ar[r]^{\iota_{2\beta,\beta}} &   \lgd_{2\beta}(X). 
}
\]
involving  the associated Lie algebra morphism \eqref{eq-lie-algebra-morphism-tau} induces the same map in homology with trivial coefficeints, $\C$, as the standard inclusion morphism
 \[
\xymatrix@C=30pt{
 \lgd_\alpha(X)  \ar[r]^{\iota_{2\beta,\alpha,*}} &   \lgd_{2\beta}(X).
}
\]
 \end{theorem}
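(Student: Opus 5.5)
Identify $A_{2\beta}(X)$ with $A_\beta(X)\otimes A_\beta(X)$ sitewise, writing $A_{2\beta(x)} = A_{\beta(x)}\otimes A_{\beta(x)}$. Under this identification $\iota_{2\beta,\beta}$ is $T\mapsto T\otimes I$, and the flip $\sigma$ of Section~\ref{sec-flip-automorphisms} exchanges the two factors. The plan is a rotation trick: pass from $T\mapsto \tau(T)\otimes I$ to $T\mapsto I\otimes \tau(T)$ and then to $T\mapsto T\otimes I$, each time using only maps that are flips or inner automorphisms known to act trivially on homology.

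\textbf{Step 1.} Work in $A_{2\beta}(X)$ and enlarge to $\lgd_{4\beta}(X)$ if needed, so that Theorem~\ref{thm-flip-is-identity-on-homology} applies; this requires $\beta\ge 2$, which one arranges by a preliminary inclusion. Consider the two unital morphisms
\[
\tau_1 = \iota_{2\beta,\beta}\circ\tau \colon T\mapsto \tau(T)\otimes I
\quad\text{and}\quad
\tau_2=\sigma\circ\tau_1\colon T\mapsto I\otimes\tau(T).
\]
By Theorem~\ref{thm-flip-is-identity-on-homology}, $\tau_1$ and $\tau_2$ induce the same map on homology.

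\textbf{Step 2.} Construct a locality-preserving automorphism $\rho$ of $A_{\alpha}\otimes A_\beta$, that is, of $A_{\alpha+\beta}(X)$, which carries $T\otimes I$ to something comparable with $I\otimes\tau(T)$. The natural candidate is the standard ``swap-in-the-image'' trick, realized as a $2\times2$ rotation. Embed $A_\alpha(X)$ in $A_\alpha\otimes A_\beta$ via $\iota$, and also via $T\mapsto I\otimes\tau(T)$. Each map is a unital, locally supported morphism, and the task is to show that they are conjugate by a product of automorphisms of the type treated in Section~\ref{sec-automorphisms-associated-to-locally-nilpotent-derivations} and flips. Since $\tau$ only has \emph{finite propagation} \eqref{eq-local-property-of-algebra-morphism-tau} and is not sitewise, I will not conjugate directly. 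Instead I will compare both maps inside a third tensor factor: consider $T\mapsto T\otimes I\otimes I$, $T\mapsto I\otimes\tau(T)\otimes I$, and so on inside $A_{\alpha}\otimes A_\beta\otimes A_\beta$. Then I will use that the Lie algebra morphisms $T\mapsto T\otimes I$ and $T\mapsto T\otimes\tau(\,\cdot\,)$-type maps generate a convolution structure (Lemma~\ref{lem-convolution-of-morphisms}) on primitives. The point is that the map $\delta\mapsto \delta\otimes 1+1\otimes\tau(\delta)$ factors through the diagonal followed by the product $\mu$, so on primitives it equals $\iota_*+\tau_*$. The same map is also conjugate, by a flip of the two factors, to $\delta\mapsto\tau(\delta)\otimes1+1\otimes\delta$. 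This comparison should show that $\tau_*$ is determined on primitives independently of the ordering; combined with Milnor--Moore (Theorem~\ref{thm-milnor-moore}), it reduces the claim to primitives.

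\textbf{Main obstacle.} The hard step is the honest identification of $\tau$ with $\iota$ on homology. I expect the argument to be the following. The morphism $\Phi\colon A_\alpha\otimes A_\beta\to A_\beta\otimes A_\beta$ given by $S\otimes T\mapsto \tau(S)\otimes T$, composed with the flip, yields on $A_\alpha(X)\otimes 1$ the map $\tau_2$, and on $1\otimes A_\beta$ the inclusion into the first factor. The symmetric argument with $\tau$ replaced by the identity, again using Theorem~\ref{thm-flip-is-identity-on-homology} in the relevant $2\beta$-algebras, relates $\tau_2$ back to the plain inclusion $\iota_{2\beta,\alpha}$. Checking that every auxiliary map is a unital morphism with property \eqref{eq-local-property-of-algebra-morphism-tau}, so that it induces a Lie morphism via \eqref{eq-definition-of-lie-algebra-morphism-tau}, and that each application of the flip occurs in a dimension function that is at least $2$, is where care is needed. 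I would first try to make the whole argument work at the level of primitive classes only.
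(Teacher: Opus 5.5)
There is a genuine gap. The step that actually identifies $\tau$ with the inclusion on homology, which you flag yourself as the main obstacle, is never carried out, and the route you sketch toward it cannot work.

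\textbf{Why the convolution route fails.} It is circular. The product morphism $\mu$ and the Hopf structure on $H_*(\lgd(X),\C)$ depend on the correcting morphisms acting as the identity (Theorem~\ref{thm-lgd-correcting-morphisms}), and that is deduced from the very statement you are proving. Moreover, the statement concerns the finite-level map $\lgd_\alpha(X)\to\lgd_{2\beta}(X)$, where no product morphism exists. The route is also empty. Even granting Lemma~\ref{lem-convolution-of-morphisms}, comparing $\delta\mapsto\delta\otimes 1+1\otimes\tau(\delta)$ with its flip yields only $\iota_*+\tau_*=\tau_*+\iota_*$ on primitives, which says nothing about $\tau_*-\iota_*$. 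Step~1 is correct but makes no progress either: flipping only in the target moves $\tau(T)$ from one factor to the other, and $\tau$ is still present.

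\textbf{The missing idea.} You need to flip in the \emph{source} as well, so that $\tau$ is transported onto a tensor factor that unitality then kills. Assume $\alpha\le\beta$ and $\alpha,\beta\ge 2$; these are needed for the flips and are tacit in the paper. Consider $\tau\otimes\mathrm{id}$ and $\mathrm{id}\otimes\tau$ from $A_{2\alpha}(X)=A_\alpha(X)\otimes A_\alpha(X)$ to $A_{2\beta}(X)=A_\beta(X)\otimes A_\beta(X)$, where $\mathrm{id}$ means $\iota_{\beta,\alpha}$. Both are unital and satisfy \eqref{eq-local-property-of-algebra-morphism-tau}. Moreover $\mathrm{id}\otimes\tau=\sigma\circ(\tau\otimes\mathrm{id})\circ\sigma$, using the flips of $A_{2\alpha}(X)$ and $A_{2\beta}(X)$. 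Hence Theorem~\ref{thm-flip-is-identity-on-homology} shows the two maps induce the same map in homology.

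Now precompose both with $\iota_{2\alpha,\alpha}\colon T\mapsto T\otimes I$. The first gives $T\mapsto\tau(T)\otimes I$, that is, $\iota_{2\beta,\beta}\circ\tau$. The second gives, because $\tau(I)=I$, the map $T\mapsto T\otimes I$, that is, $\iota_{2\beta,\alpha}$. This is the paper's argument, and nothing more is needed.

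\textbf{Where your $\Phi$ falls short.} Your map $\Phi(S\otimes T)=\tau(S)\otimes T$ is nearly $\tau\otimes\mathrm{id}$. But with domain $A_\alpha\otimes A_\beta$ the two halves have unequal size, so the flip theorem is unavailable in the source. You are then left needing the unproved claim that $S\mapsto S\otimes I$ and $S\mapsto I\otimes S$ agree on homology. Taking the domain to be $A_\alpha\otimes A_\alpha$ and using $\tau(I)=I$ removes exactly this obstacle.
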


 \begin{proof} 
 The isomorphisms
 \[
 A_{\alpha(x)}\otimes A_{\alpha (x)} \stackrel \cong \longrightarrow A_{2\alpha(x)}
\]
given by the formulas 
\begin{multline*}
\bigl (S_1\otimes \dots \otimes S_{\alpha(x)}\bigr )
\otimes
\bigl (T_1\otimes \dots \otimes T_{\alpha(x)}\bigr )
\\ 
\mapsto 
S_1\otimes \dots \otimes S_{\alpha(x)}\otimes T_1\otimes \dots \otimes T_{\alpha(x)},
\end{multline*}
and the similar isomorphisms for $\beta$ in place of $\alpha$, determine   isomorphisms
\[
A_\alpha (X) \otimes A_\alpha (X) \stackrel \cong \longrightarrow  A_{2\alpha} (X)\quad \text{and} \quad 
A_\beta (X) \otimes A_\beta (X) \stackrel \cong \longrightarrow  A_{2\beta} (X)
\]
Using these, we may define associative algebra morphisms
\[
\begin{aligned}
\tau_{\mathrm{left}}&=\tau \otimes \mathrm{id}\colon A_{2\alpha }(X)\longrightarrow A_{2 \beta}(X) 
\\
\tau_{\mathrm{right}}& = \mathrm{id}\otimes \tau 
\colon A_{2\alpha }(X)\longrightarrow A_{2 \beta}(X).
\end{aligned}
\]
These satisfy the property \eqref{eq-local-property-of-algebra-morphism-tau} and so induce morphisms of Lie algebras of locally generated derivations. The latter fit into commuting diagrams 
\[
\xymatrix{
 \lgd_\alpha(X) \ar@{=}[d] \ar[r]^{\iota_{2 \alpha,\alpha}} & \lgd_{2\alpha}(X)\ar[r]^{\tau_{\mathrm{left}}} &  \lgd_{2\beta}(X) \ar@{=}[d]
 \\
 \lgd_\alpha(X) \ar[r]_{\tau} & \lgd_{\beta}(X)  \ar[r]_{\iota_{2\beta, \beta}} & \lgd_{2 \beta}(X)
}
\] 
and 
\[
\xymatrix{
 \lgd_\alpha(X) \ar@{=}[d] \ar[r]^{\iota_{2 \alpha,\alpha}} & \lgd_{2\alpha}(X)  \ar[r]^{\tau_{\mathrm{right}}} &  \lgd_{2\beta}(X) \ar@{=}[d]
 \\
 \lgd_\alpha(X) \ar[r]_{\iota_{\beta,\alpha}} & \lgd_{\beta}(X)  \ar[r]_{\iota_{2\beta, \beta}} & \lgd_{2 \beta}(X)
}
\] 
that are obtained from similar diagrams of associative algbera morphisms.  But $\tau_{\mathrm{left}}$ and $\tau_{\mathrm{right}}$ induce the same morphism on Lie algebra homology with trivial coefficients in view of Theorem~\ref{thm-flip-is-identity-on-homology} and the commuting diagram
\[
\xymatrix{
   \lgd_{2\beta}(X)  \ar[r]^{\tau_{\mathrm{right}}}\ar[d]_{\sigma} &  \lgd_{2\beta}(X) \ar@{=}[d]
 \\
 \lgd_{2\beta}(X)  \ar[r]_{\tau_{\mathrm{left}}} & \lgd_{2 \beta}(X),
}
\]
in which  $\sigma$ is the flip automorphism.
 \end{proof}

\subsection{Correcting morphisms}
\label{sec-correcting-morphisms}
Let $X$ be a uniformly properly discrete metric space, and let $\alpha\colon X \to \N$ be any function.  We are going to put a Hopf algebra structure on the homology of $\lgd (X)$, using the method of product morphisms and correcting morphisms  described in Sections~\ref{subsec-algebra-structure} and \ref{sec-hopf-algebra-structure-from-product-morphism}. In this section we shall describe an appropriate family of correcting morphisms.

Let $\gamma \colon X\times \N \to \N$ be a function with the property that 
\[
\forall x \in X : \gamma (x,m) = \gamma (x,n) \quad \Rightarrow \quad m=n.
\]  
Thus $\gamma$ is 1-1 in the $\N$-variable.

\begin{remark}
\label{rem-even-and-odd-maps}
We are mostly interested in the case where $\gamma$ is independent of $x\in X$, and  good examples of this type to consider are the functions $\gamma_{\mathrm{even}}(n) = 2n$ and $\gamma_{\mathrm{odd}}(n) = 2n{+}1$. But it is convenient to consider the more general maps above, that are also functions of $x\in X$,  at the same time.
\end{remark}

Given a map $\gamma$ as above, and given $\alpha \colon X \to \N$, assume that $\beta\colon X \to \N$ has the property that 
\begin{equation}
\label{eq-condition-for-gamma-embedding}
\forall x \in X : \quad k \le \alpha (x) \quad \Rightarrow \quad \gamma (x,k) \le \beta(x)  . 
\end{equation}
Define   embeddings 
\begin{equation}
    \label{eq-gamma-x-embeddings}
\begin{gathered}
\gamma_x \colon A_{\alpha(x)} \longrightarrow A _{\beta(x)}
\\
A_{\alpha(x)}\ni T_1\otimes \cdots \otimes T_{\alpha(x)}
\longmapsto S_1\otimes \cdots \otimes S_{\beta(x)} \in A_{\beta(x)},
\end{gathered}
\end{equation}
where 
\[
S_k = 
\begin{cases} 
T_\ell & \exists \ell\in \{ 0,\dots, \alpha(x)\} :\,\,\gamma (x,\ell) = k 
\\
I & \nexists \ell\in \{ 0,\dots, \alpha(x)\} :\,\,\gamma (x,\ell) = k.
\end{cases}
\]
So each $T_\ell$ appears exactly once among the $S_k$, and all the rest of the $S_k$, meaning those that not equal to some $T_\ell$,  are equal to $I$.

The embeddings \eqref{eq-gamma-x-embeddings} determine embeddings 
\[
\gamma_F = \otimes _{x\in F} \colon A_\alpha (F) \longrightarrow A_{\beta} (F)
\]
for every finite subset of $F$.  These are compatible with inclusions of finite subsets, in the sense that if $F'\subseteq F''$, then the diagram
\[
\xymatrix{
A_\alpha (F') \ar[r]^{\gamma_{F'}} \ar[d] & A_{\beta}(F') \ar[d]
\\
A_\alpha (F'') \ar[r]_{\gamma_{F''}} & A_{\beta}(F'')
}
\]
commutes, where the vertical morphisms are as in \eqref{eq-embed-a-f-prime-into-a-f-double-prime}.  There is therefore an induced embedding of direct limits 
\[
\gamma_X \colon A_\alpha (X) \longrightarrow A_{\beta}(X) .
\]
Moreover this morphism is compatible with our order relation \eqref{eq-order-on-alphas}, in the sense that if $\alpha \le \beta_1 \le \beta_2$, then the diagram
\[
\xymatrix{
A_\alpha (X) \ar[r]^{\gamma_X}  \ar@{=}[d] & A_{\beta_1}(X) \ar[d]^{\iota_{\beta_2,\beta_1}}
\\
A_\alpha (X) \ar[r]_{\gamma_X} & A_{\beta_2}(X)
}
\]
commutes (here the morphism $\iota_{\beta_2,\beta_1}\colon A_{\beta_1}(X) \to A_{\beta_2}(X)$ is the canonical embedding from \eqref{eq-embedding-of-lgd-alpha-into-lgd-beta}). So passing to a second direct limit, we obtain from the  $\gamma$ with which we started an induced morphism of associative algebras
\begin{equation}
\label{eq-gamma-morphism-on-a-x}
\gamma_X \colon A(X)\longrightarrow A(X).
\end{equation}
It is not necessarily an automorphism. 
Once again, there is an associated morphism of Lie algebras 
\begin{equation}
\label{eq-gamma-morphism-on-lgd-x}
\gamma_X \colon \lgd(X)\longrightarrow \lgd (X),
\end{equation}
although since \eqref{eq-gamma-morphism-on-a-x} is not necessarily an automorphism, we need to use the explicit formula 
\[
\gamma(\delta)(T) = \sum_{x\in X} [ \gamma_X(H_x), T]
\quad \text{if} \quad \delta(T) =  \sum_{x\in X} [  H_x, T]
\]
to define it.

\begin{theorem} 
\label{thm-lgd-correcting-morphisms}
Let $X$ be a uniformly proper discrete metric space. The Lie algebra morphism $\gamma_X \colon \lgd (X) \to \lgd (X)$ in \eqref{eq-gamma-morphism-on-lgd-x} induces the identity map on homology: 
\[
\gamma_{X,*} =\mathrm{id} \colon H_*(\lgd (X),\C)   \longrightarrow  H_*(\lgd (X),\C) .
\]
\end{theorem}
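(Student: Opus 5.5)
Homology of a direct limit of Lie algebras is the direct limit of the homologies. So it suffices to prove the following: for each $\alpha$, the composite
\[
\lgd_\alpha(X)\xrightarrow{\gamma_X}\lgd_\beta(X)\longrightarrow \lgd(X)
\]
induces the same map on $H_*(\,\cdot\,,\C)$ as the canonical map $\lgd_\alpha(X)\to\lgd(X)$. Here $\beta$ is any function satisfying \eqref{eq-condition-for-gamma-embedding}; enlarging $\beta$ if necessary, we may also assume $\beta\ge\alpha$. The plan is to deduce this directly from Theorem~\ref{thm-using-the-flip}, with the unital algebra morphism $\tau=\gamma_X\colon A_\alpha(X)\to A_\beta(X)$.

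The first step is to check the hypotheses of Theorem~\ref{thm-using-the-flip}. The morphism $\gamma_X$ is assembled from the single-site embeddings $\gamma_x\colon A_{\alpha(x)}\to A_{\beta(x)}$, so it is unital. It also carries $A_\alpha(F)$ into $A_\beta(F)$ for every finite $F$, so the locality condition \eqref{eq-local-property-of-algebra-morphism-tau} holds with any $R>0$ (indeed with $R=0$). The associated Lie algebra morphism \eqref{eq-lie-algebra-morphism-tau} is exactly the morphism $\gamma_X$ on $\lgd_\alpha(X)$ given by the explicit formula preceding the theorem.

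The theorem then gives $(\iota_{2\beta,\beta}\circ\gamma_X)_*=(\iota_{2\beta,\alpha})_*$ on $H_*(\lgd_\alpha(X),\C)\to H_*(\lgd_{2\beta}(X),\C)$. Composing with the canonical map $\lgd_{2\beta}(X)\to\lgd(X)$, both sides become maps into $H_*(\lgd(X),\C)$:
\begin{enumerate}[\rm (i)]
\item the left side becomes the composite of $\gamma_X$ (at level $\alpha$) with the canonical map, which is the restriction of the global $\gamma_X$ on $\lgd(X)$ to $\lgd_\alpha(X)$;
\item the right side becomes the canonical map from $\lgd_\alpha(X)$.
\end{enumerate}
For (i), we need compatibility of the level-$\alpha$ morphisms $\gamma_X$ with the structure maps $\iota$. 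This is the commuting square stated just before \eqref{eq-gamma-morphism-on-a-x}, which shows the global morphism is well defined. Since every class in $H_*(\lgd(X),\C)$ comes from some $H_*(\lgd_\alpha(X),\C)$, it follows that $\gamma_{X,*}=\mathrm{id}$.

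I expect the main point needing care to be compatibility with the directed system in the other variable. The global $\gamma_X$ on $\lgd(X)$ is defined on the colimit over $\alpha$, so one must check that for $\alpha\le\alpha'$ the level-$\alpha'$ morphism restricted along $\iota_{\alpha',\alpha}$ agrees with the level-$\alpha$ morphism. This holds because $\gamma(x,\cdot)$ is injective and the extra tensor factors are identities. It also needs a single $\beta$ that works for both levels; any $\beta$ dominating the $\gamma$-images for $\alpha'$ will do.

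A minor point is that Theorem~\ref{thm-using-the-flip} relies on Theorem~\ref{thm-flip-is-identity-on-homology}, which requires $\beta(x)\ge 2$. We can always arrange this by enlarging $\beta$, since $\beta$ is only constrained from below.
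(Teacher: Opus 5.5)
Your proposal is correct and follows the paper's own proof. In both, for each $\alpha$ one chooses $\beta\ge\alpha$ satisfying \eqref{eq-condition-for-gamma-embedding}, applies Theorem~\ref{thm-using-the-flip} with $\tau=\gamma_X$, and passes to the direct limit over $\alpha$. The additional checks you make are details the paper leaves implicit: locality holds with any $R$, the level-$\alpha$ maps are compatible with the directed system, and $\beta$ can be enlarged so that $\beta\ge 2$, as the flip theorem requires.
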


\begin{proof}
Given any   $\alpha\colon X \to \N$, choose $\beta\colon X \to \N$ such that $\beta\ge \alpha$  and \eqref{eq-condition-for-gamma-embedding} holds, which implies  that $\gamma_X$ is well-defined as a morphism
\begin{equation*}
\gamma_X \colon \lgd_\alpha(X) \longrightarrow \lgd_\beta(X).
\end{equation*}
Theorem~\ref{thm-using-the-flip} shows that the diagram 
\[
 \xymatrix{
    \lgd_\alpha(X) \ar@{=}[d]  \ar[r]^{\gamma_X} & \lgd_{\beta}(X) \ar[r]^{\iota_{2\beta,\beta}}& \lgd_{2 \beta} (X) \ar@{=}[d]
    \\
    \lgd_{\alpha}(X) \ar[r]_{\iota_{\beta,\alpha}} & \lgd_{\beta} (X)\ar[r]_{\iota_{2\beta,\beta}} & \lgd_{2 \beta} (X)
    }
\]
commutes at the level of homology with trivial coefficients, and this gives the required result.
\end{proof}

\subsection{Product morphisms}
As in Remark~\ref{rem-even-and-odd-maps}, denote  by 
\[
\gamma_{\mathrm{even}}, \gamma_{\mathrm{odd}}\colon \N \longrightarrow \N
\]
the 1-1 maps 
\[
\gamma_{\mathrm{even}}(n) = 2n \quad \text{and} \quad \gamma_{\mathrm{odd}}(n) = 2n{+}1\qquad (n\in \N).
\]
The images of these maps are disjoint from one another, of course, and as a result the images of the maps 
\[
\gamma_{\mathrm{even}}, \gamma_{\mathrm{odd}}\colon \lgd(X) \longrightarrow \lgd(X)
\]
commute with one another.  We may therefore  define  a product morphism 
\[
\mu\colon \mathfrak{lgd}(X)\times \mathfrak{lgd}(X) \longrightarrow \mathfrak{lgd}(X)
\]
by means of the formula 
\begin{equation}
\label{eq-even-odd-definition-of-mu}
\mu(\delta_1, \delta_2) = \gamma_{\mathrm{even},X}(\delta_1) + \gamma_{\mathrm{odd},X}( \delta _2).
\end{equation}

\begin{theorem}
\label{thm-gamma-morphisms-trivial-on-homology}
    The product morphism in \eqref{eq-even-odd-definition-of-mu}, and the correcting morphisms in  Section~\textup{\ref{sec-correcting-morphisms}}, satisfy the conditions \textup{\ref{eq-first-product-morphism-condition}-\ref{eq-last-product-morphism-condition}}. Restricting these structures to the ideal $\lgd(X,Y)\triangleleft \lgd(X)$ we obtain an admissible pair as in Definition~\textup{\ref{def-admissible-pair}}. \qed
\end{theorem}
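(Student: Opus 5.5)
We have to check conditions \ref{eq-first-product-morphism-condition}--\ref{eq-last-product-morphism-condition} for $\mu$ in \eqref{eq-even-odd-definition-of-mu}, with correcting morphisms the maps $\gamma_X$ of Section~\ref{sec-correcting-morphisms}. We then have to check that everything restricts to the ideal $\lgd(X,Y)$ and passes to the quotient. Throughout, the guiding principle is that every map in sight is of the form $\gamma_X$ for some function $\gamma\colon X\times\N\to\N$ that is injective in the $\N$-variable. Such maps are controlled by Theorem~\ref{thm-lgd-correcting-morphisms} and its relative version.

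\emph{Step 1: the four conditions for $\lgd(X)$.} We take the family of correcting morphisms to be all $\gamma_X$ with $\gamma$ injective in $\N$.
\begin{itemize}
\item[(a)] Condition \ref{eq-first-product-morphism-condition} is exactly Theorem~\ref{thm-lgd-correcting-morphisms}.
\item[(b)] For the second condition, $\mu\circ\iota_1=\gamma_{\mathrm{even},X}$ and $\mu\circ\iota_2=\gamma_{\mathrm{odd},X}$. Both are correcting morphisms, so both induce the identity on homology.
\item[(c)] For the flip condition, take $\gamma(n)=n+1$ for $n$ even and $\gamma(n)=n-1$ for $n$ odd. This swaps even and odd slots, so $\gamma_X\circ\mu=\mu\circ\mathrm{flip}$ follows from $\gamma\circ\gamma_{\mathrm{even}}=\gamma_{\mathrm{odd}}$ and $\gamma\circ\gamma_{\mathrm{odd}}=\gamma_{\mathrm{even}}$, together with functoriality $(\gamma\circ\gamma')_X=\gamma_X\circ\gamma'_X$.
\item[(d)] For associativity, $\mu\circ(\mu\times\mathrm{id})$ places the three factors via $4n$, $4n+2$, $2n+1$. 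On the other side, $\mu\circ(\mathrm{id}\times\mu)$ places them via $2n$, $4n+1$, $4n+3$. These are two triples of injections $\N\to\N$ with pairwise disjoint images. A bijection $\gamma$ of $\N$ carrying the first triple of image sets onto the second, compatibly with the parametrisations, exists because all six image sets are infinite with infinite complements. Such a $\gamma$ gives the required commutative square.
\end{itemize}

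The routine point to verify is functoriality. We need $(\gamma\circ\gamma')_X=\gamma_X\circ\gamma'_X$ on $A(X)$, and hence on $\lgd(X)$ via the explicit formula $\gamma(\delta)=\sum\ad_{\gamma_X(H_x)}$. This follows directly from the defining formula \eqref{eq-gamma-x-embeddings}, after passing through the direct limit over $\alpha$.

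\emph{Step 2: restriction to the ideal and passage to the quotient.} Each $\gamma_X$ acts factorwise at each point $x$, so it maps $A_\alpha(B_X(y,R))$ into $A_\beta(B_X(y,R))$. Hence it carries $\lgd_\alpha(X,Y)$ into $\lgd_\beta(X,Y)$, and $\mu$ does the same. The diagrams above therefore restrict to $\lgd(X,Y)$ and descend to $\lgd(X)/\lgd(X,Y)$.

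\emph{Step 3: identity on homology for the ideal and the quotient.} This is the main obstacle. For the ideal, I would rerun the proof of Theorem~\ref{thm-lgd-correcting-morphisms}, now relying on Theorem~\ref{thm-using-the-flip}, which applies to ideals as remarked there. Its proof rests on the flip theorem, and the flip is available in the relative case by Theorem~\ref{thm-flip-is-identity-on-homology-in-realtive-case}.

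For the quotient, the natural attempt is to use that the flip is a product of $\Ad_{\exp N}$ with $N$ nilpotent, and that the corresponding $\varepsilon$ lies in $\lgd(X)$. Its image in the quotient is then locally nilpotent, so Lemma~\ref{lem-inner-automorphisms-from-locally-nilpotent-elements} shows the induced flip on the quotient is trivial on homology. Rerunning the argument of Theorem~\ref{thm-using-the-flip} in the quotient then gives that every $\gamma_X$ acts as the identity on $H_*(\lgd(X)/\lgd(X,Y),\C)$.

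The one care point is the identity $\alpha(x)\ge2$ needed in Theorem~\ref{thm-flip-is-identity-on-homology}. This is harmless, since in the direct limit we may always enlarge $\alpha$.
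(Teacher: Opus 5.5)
Your proposal is correct and matches the argument the paper leaves implicit: the paper states the theorem without proof, relying on Theorem~\ref{thm-lgd-correcting-morphisms}, the functoriality of $\gamma\mapsto\gamma_X$, and the relative flip result Theorem~\ref{thm-flip-is-identity-on-homology-in-realtive-case}, and your use of the image of the locally nilpotent $\varepsilon$ in $\lgd(X)/\lgd(X,Y)$ soundly supplies the quotient case, which the paper never spells out. One small correction to (d): $\gamma$ exists not because the image sets are ``infinite with infinite complements'', but because $\{4n\},\{4n+2\},\{2n+1\}$ partition $\N$ and $\{2n\},\{4n+1\},\{4n+3\}$ are pairwise disjoint, so $\gamma(4n)=2n$, $\gamma(4n+2)=4n+1$, $\gamma(2n+1)=4n+3$ is forced and injective (indeed bijective).
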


\subsection{Shift morphisms}

If $j\colon X \to X$ is any 1-1 map, then there are, for all functions $\alpha,\beta \colon X \to \N$ with $\alpha (x) \le \beta (j(x))$   induced algebra morphisms 
\[
j\colon A_\alpha(X)\longrightarrow A_{\beta}(X) .
\]
mapping $A_{\alpha(x)}$ into $A_{\beta(j(x))}$ in the standard way, described in \eqref{eq-embedding-of-matrix-algebras}.
These are compatible with limits over the directed system of all possible $\alpha$ and $\beta$, and so we   obtain an induced morphism 
\[
j\colon A(X) \longrightarrow A(X); 
\]
it is an isomorphism onto $A(j(X))$.  If in addition $j$ has the property that
\[
\sup_{x\in X} d(x,j(x))<\infty,
\]
 then there is also an induced Lie algebra morphism

\begin{equation}
    \label{eq-lie-algebra-morphism-j}
j\colon \lgd  (X)\longrightarrow \lgd (X) 
\end{equation}
for which 
\[
j(\delta)(T) = \sum_{x\in X} [ j(H_x), T]
\quad \text{if} \quad \delta(T) =  \sum_{x\in X} [  H_x, T].
\]
Compare \eqref{eq-definition-of-lie-algebra-morphism-tau}.  The following is an immediate consequence of Theorem~\ref{thm-using-the-flip}:

\begin{theorem}
\label{thm-j-induces-identity-on-homology}
Let $X$ be a uniformly proper discrete metric space and let $j\colon X \to X$ be as above.    The associated  Lie algebra morphism \eqref{eq-lie-algebra-morphism-j} induces the identity map on homology. \qed
\end{theorem}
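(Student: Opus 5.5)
The plan is to reduce Theorem~\ref{thm-j-induces-identity-on-homology} to Theorem~\ref{thm-using-the-flip}, working one stage $\alpha$ of the direct limit at a time.

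\emph{Step 1 (realize $j$ at a finite stage).} Since $H_*(\lgd(X),\C)=\varinjlim_\alpha H_*(\lgd_\alpha(X),\C)$, it suffices to fix $\alpha\colon X\to\N$ and compare two maps out of $\lgd_\alpha(X)$. Choose $\beta\ge\alpha$ with $\beta(j(x))\ge\alpha(x)$ for all $x$; for example take $\beta(y)=\max\bigl(\alpha(y),\alpha(j^{-1}(y))\bigr)$, where the second term is omitted if $y\notin j(X)$. This is well defined because $j$ is $1$-$1$. Then $j$ gives a unital algebra morphism
\[
\tau=j\colon A_\alpha(X)\to A_\beta(X),
\]
and the Lie algebra morphism \eqref{eq-lie-algebra-morphism-j}, restricted to $\lgd_\alpha(X)$, is the composite of the morphism $\lgd_\alpha(X)\to\lgd_\beta(X)$ from \eqref{eq-definition-of-lie-algebra-morphism-tau} with the limit maps.

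\emph{Step 2 (check the hypotheses of Theorem~\ref{thm-using-the-flip}).}
\begin{enumerate}[\rm (i)]
\item $\tau$ is unital, because the standard embedding \eqref{eq-embedding-of-matrix-algebras} pads with identities, and factors over points outside $j(X)$ are filled with $I$.
\item $\tau$ satisfies the locality condition \eqref{eq-local-property-of-algebra-morphism-tau}. If $R_0=\sup_x d(x,j(x))<\infty$, then $\tau[A_\alpha(F)]\subseteq A_\beta(j(F))\subseteq A_\beta(\operatorname{Pen}_X(F;R_0))$.
\item The formula $\sum_x[\tau(H_x),\,\cdot\,]$ defines a locally generated derivation. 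Since $\tau(H_x)\in A_\beta(B_X(j(x),R+R_0))$, it is supported near $j(x)$; reindexing by $y=j(x)$, and using that $j$ is injective, gives generators in $A_\beta(B_X(y,R+R_0))$. This is exactly the observation the paper makes before \eqref{eq-lie-algebra-morphism-j}, so I will simply cite it.
\end{enumerate}
Uniform proper discreteness of $X$ is part of the hypothesis.

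\emph{Step 3 (apply the theorem and pass to the limit).} Theorem~\ref{thm-using-the-flip} gives
\[
(\iota_{2\beta,\beta}\circ\tau)_*=(\iota_{2\beta,\alpha})_*\colon H_*(\lgd_\alpha(X),\C)\to H_*(\lgd_{2\beta}(X),\C).
\]
Composing with the canonical map $\lgd_{2\beta}(X)\to\lgd(X)$, the left side becomes $j_*$ restricted to the image of $H_*(\lgd_\alpha(X),\C)$. The right side is the canonical map from $H_*(\lgd_\alpha(X),\C)$ to $H_*(\lgd(X),\C)$. Since homology commutes with direct limits of Lie algebras (the Chevalley–Eilenberg complex of a direct limit is the direct limit of complexes, and homology commutes with filtered colimits), and since every class comes from some stage $\alpha$, we conclude $j_*=\mathrm{id}$.

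The main point of care is Step 1: one must verify that the morphism \eqref{eq-lie-algebra-morphism-j} on the limit agrees with the stagewise $\tau$'s. In particular, these stagewise maps must be compatible with the $\iota$'s as $\alpha$ varies, so that $j$ on $\lgd(X)$ really is the colimit of the maps $\lgd_\alpha(X)\to\lgd_\beta(X)$. This follows from functoriality of the standard embeddings \eqref{eq-embedding-of-matrix-algebras}, which is the same compatibility used for $\gamma_X$ in Section~\ref{sec-correcting-morphisms}. Beyond that, the argument is a direct citation of Theorem~\ref{thm-using-the-flip}, in the same way Theorem~\ref{thm-lgd-correcting-morphisms} was derived.
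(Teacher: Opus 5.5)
Your proposal is correct and is essentially the paper's argument: the paper records Theorem~\ref{thm-j-induces-identity-on-homology} as an immediate consequence of Theorem~\ref{thm-using-the-flip}, and you have filled in the details that this citation leaves implicit. These are choosing $\beta\ge\alpha$ with $\beta(j(x))\ge\alpha(x)$ (the condition $\beta\ge\alpha$ is what makes $\iota_{2\beta,\alpha}$ and the comparison in Theorem~\ref{thm-using-the-flip} meaningful), checking unitality and the locality condition with $R_0=\sup_x d(x,j(x))$, and passing to the direct limit over $\alpha$.
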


This applies, in particular, to  the shift map
$s \colon \Z^n\to \Z^n$ defined by 
\[
s(k_1,\dots,k_{n-1},k_n) =(k_1,\dots,k_{n-1},k_n{+}1)\qquad \forall (k_1,\dots, k_n) \in \Z^n  .
\]
Let us also  write 
\begin{equation*}
\Z^n_+ = \{\, (k_1,\dots, k_n) \in \Z^n : k_n \ge 0\, \}
\end{equation*}
and
\begin{equation*}
\Z^n_- = \{\, (k_1,\dots, k_n) \in \Z^n : k_n \le 0\, \},
\end{equation*}
and consider $\Z^{n-1}$ as embedded in $\Z^n$ via the obvious isomorphism
\[
\Z^{n-1} \stackrel \cong\longrightarrow  \{\, (k_1,\dots, k_n) \in \Z^n : k_n  =0\, \}.
\]
In line with this, we shall write $\Z^0 = \{ 0\}$. 
Note that  Lie algebra  morphism 
\[
s\colon \lgd(\Z^n) \longrightarrow \lgd(\Z^n)
\]
maps the ideal $\lgd(\Z^n,\Z^n_+)$ into itself, and it also maps $\lgd(\Z^n_+)$ to itself.

\begin{theorem}
\label{thm-s-star-equals-1}
The shift maps   on $\lgd(\Z^n)$ and  $\lgd(\Z^n_+)$
  induce the identity map on homology.  \qed
\end{theorem}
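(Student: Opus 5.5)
This is a direct application of Theorem~\ref{thm-j-induces-identity-on-homology}, once it is checked that the shift $s$ fits the hypotheses of that theorem. The plan is to verify those hypotheses for each of the two Lie algebras and then pass to the direct limit over dimension functions.

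For $\lgd(\Z^n)$, the steps are as follows.
\begin{enumerate}[\rm (i)]
\item The integer lattice $\Z^n$ with its Euclidean (or $\ell^1$) metric is uniformly properly discrete. Indeed, translation invariance gives $\operatorname{Card} B(x,R) = \operatorname{Card} B(0,R) < \infty$ for every $x$.
\item The shift $s$ is injective.
\item The shift moves every point the same distance: $\sup_x d(x,s(x)) = 1 < \infty$.
\end{enumerate}
These are exactly the hypotheses of Theorem~\ref{thm-j-induces-identity-on-homology}, so the induced endomorphism $s$ of $\lgd(\Z^n)$ is the identity on $H_*(\lgd(\Z^n),\C)$. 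Underneath that theorem, the mechanism is Theorem~\ref{thm-using-the-flip}: at each level $\alpha$, choose $\beta$ with $\beta(s(x)) \ge \alpha(x)$ and $\beta \ge \alpha$. Then $s$ composed with $\iota_{2\beta,\beta}$ agrees in homology with the inclusion $\iota_{2\beta,\alpha}$, because the algebra morphism $s\colon A_\alpha(\Z^n)\to A_\beta(\Z^n)$ satisfies \eqref{eq-local-property-of-algebra-morphism-tau} with $R=1$. Passing to the direct limit over $\alpha$ gives $s_* = \mathrm{id}$.

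For $\lgd(\Z^n_+)$, restrict everything to the half-space, which is itself a metric space:
\begin{enumerate}[\rm (i)]
\item $\Z^n_+$ with the restricted metric is uniformly properly discrete, since its balls are contained in balls of $\Z^n$.
\item The shift maps $\Z^n_+$ into itself, because it raises the last coordinate.
\item The restricted shift is injective and moves points distance $1$.
\end{enumerate}
So Theorem~\ref{thm-j-induces-identity-on-homology} applies verbatim with $X = \Z^n_+$ and $j = s|_{\Z^n_+}$. One should also check that the morphism $s$ on $\lgd(\Z^n_+)$ produced by that theorem is the same as the restriction of the shift on $\lgd(\Z^n)$ noted just before the statement, so that both readings of ``the shift on $\lgd(\Z^n_+)$'' coincide. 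This holds because both are defined by the same formula $H_x \mapsto s(H_x)$.

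The only point needing care is bookkeeping. The paper's proof of Theorem~\ref{thm-using-the-flip} builds the flip $\sigma$ on $A_{2\beta}$, and Theorem~\ref{thm-flip-is-identity-on-homology} requires $\beta \ge 2$ pointwise. This causes no difficulty, because $\beta$ may always be enlarged within the direct system. I expect no substantive obstacle: the statement is an immediate corollary.
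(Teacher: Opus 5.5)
Your proposal is correct and follows the paper's route exactly. The paper records this statement as an immediate consequence of Theorem~\ref{thm-j-induces-identity-on-homology}, which is itself a corollary of Theorem~\ref{thm-using-the-flip}, applied to the shift on the uniformly properly discrete spaces $\Z^n$ and $\Z^n_+$; your extra bookkeeping (choosing $\beta\ge\alpha$ with $\beta\circ s\ge\alpha$, enlarging the dimension functions, including $\alpha$, so the flip theorem applies, and passing to the direct limit) just makes explicit what the paper leaves implicit.
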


On the subject of $\Z^n$ and $\Z^n_+$, we shall also use the following result in the next section:

\begin{theorem}
\label{thm-absolute-v-relative}
Let $n\ge 1$.
    The inclusions 
    \[
    \lgd(\Z^{n-1}) \to \lgd(\Z^n_{+},\Z^{n-1})
    \quad\text{and} \quad 
    \lgd(\Z_+^n) \to \lgd(\Z^n,\Z_+^n)
    \]
    induce  isomorphisms 
    \[
    H_*(\lgd (\Z^{n-1}), \C) \stackrel \cong \longrightarrow H_*(\lgd (\Z^n_+,\Z^{n-1}), \C)
   \]
   and
   \[
      H_*(\lgd (\Z_{+}^n), \C) \stackrel \cong \longrightarrow H_*(\lgd (\Z^n,\Z_{+}^n), \C).
    \]
\end{theorem}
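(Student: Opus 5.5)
We will prove the two isomorphisms by writing the Lie algebra on the right-hand side as an increasing union of subalgebras that are isomorphic to the Lie algebra on the left, with inclusion maps that induce the identity on homology. Since Lie algebra homology commutes with increasing unions (direct limits), this will give the result.

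Begin with the second inclusion. By Lemma~\ref{lem-lgd-ideal-is-increasing-union},
\[
\lgd(\Z^n,\Z^n_+)=\bigcup_{R>0}\lgd\bigl(\operatorname{Pen}_{\Z^n}(\Z^n_+;R)\bigr),
\]
and for integer $R$ the penumbra is the shifted half-space $s^{-R}(\Z^n_+)=\{k_n\ge -R\}$. The shift $s^R$ is an isometry $\{k_n\ge -R\}\to \Z^n_+$, so it gives an isomorphism $\lgd(\{k_n\ge -R\})\cong\lgd(\Z^n_+)$. Under these identifications the inclusion $\lgd(\{k_n\ge -R\})\subseteq \lgd(\{k_n\ge -R-1\})$ becomes the shift endomorphism $s\colon\lgd(\Z^n_+)\to\lgd(\Z^n_+)$. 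By Theorem~\ref{thm-s-star-equals-1} this map induces the identity on homology. Hence the direct system of homology groups has all transition maps equal to isomorphisms, namely the identity after identification. Its colimit, which is $H_*(\lgd(\Z^n,\Z^n_+))$, is therefore the homology at the first stage $R=0$, which is $\lgd(\Z^n_+)$. The map realizing this is exactly the inclusion. There is one technical point: the direct limits over $\alpha$ and over $R$ have to be interchanged, but both are filtered colimits of subalgebras inside $\lgd(\Z^n)$, so this is harmless.

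For the first inclusion, $\lgd(\Z^n_+,\Z^{n-1})$ is the union over $R$ of $\lgd(Y_R)$ with $Y_R=\Z^{n-1}\times\{0,\dots,R\}$. Here there is no shift isometry available, so a different argument is needed. The inclusion $\lgd(\Z^{n-1})\to\lgd(Y_R)$ is induced by the injection $\Z^{n-1}\to Y_R$. We choose a bounded-distance, coarse-equivalence-type algebra morphism in the reverse direction: collapse the column $\{x\}\times\{0,\dots,R\}$ onto the single point $x$, using $A_\alpha(\{x\}\times[0,R])\cong A_{\sum_j\alpha(x,j)}$ at $x$. This is a unital algebra morphism $\tau\colon A_\alpha(Y_R)\to A_\beta(\Z^{n-1})$ with finite propagation, and it induces $\tau\colon\lgd(Y_R)\to\lgd(\Z^{n-1})$ via \eqref{eq-definition-of-lie-algebra-morphism-tau}. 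The composites in both directions are again induced by finite-propagation unital algebra morphisms. Theorem~\ref{thm-using-the-flip}, applied on the uniformly properly discrete spaces $\Z^{n-1}$ and $Y_R$ (the latter viewed inside $\Z^n$, or directly), shows that after stabilizing by $\iota_{2\beta,\beta}$ each composite induces the same map on homology as the standard inclusion. Passing to the limit over $\alpha$, the composites induce the identity on $H_*(\lgd(\cdot))$. So each inclusion $\lgd(\Z^{n-1})\to\lgd(Y_R)$ is a homology isomorphism, and taking the union over $R$ gives the claim.

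The main obstacle is this last step. Theorem~\ref{thm-using-the-flip} is stated for morphisms $A_\alpha(X)\to A_\beta(X)$ on a single space, while the collapse map goes between two different spaces. To handle this, we would regard everything inside $Y_R$: we compose the collapse with the inclusion $\Z^{n-1}\subseteq Y_R$ to obtain an endomorphism of $A(Y_R)$ with finite propagation. Its induced map on $\lgd(Y_R)$ acts as the identity on homology, and it factors through $\lgd(\Z^{n-1})$. The other composite, on $\Z^{n-1}$, is the identity up to the stabilization $\iota$. Together these give mutually inverse maps on homology.
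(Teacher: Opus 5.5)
Your argument is correct. For the first isomorphism it is the paper's proof. You exhaust $\lgd(\Z^n_+,\Z^{n-1})$ by the algebras $\lgd(Y_R)$, $Y_R=\Z^{n-1}\times\{0,\dots,R\}$, and collapse each column onto its base point via $A_{\alpha'(x)}\cong A_{\alpha(x,0)}\otimes\cdots\otimes A_{\alpha(x,R)}$. Then you observe that $\tau\circ j$ is the standard inclusion, and apply Theorem~\ref{thm-using-the-flip} on $Y_R$ to the endomorphism $j\circ\tau$. That is exactly how you resolve your ``main obstacle.''

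The paper writes out only this first case. The second is left implicit; the evident analogue is to collapse the strip $\{-R\le k_n\le 0\}$ of $\operatorname{Pen}_{\Z^n}(\Z^n_+;R)=\{k_n\ge -R\}$ onto the hyperplane $\{k_n=0\}$.

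For the second case your route is genuinely different. You use the translation $s^R\colon\{k_n\ge -R\}\to\Z^n_+$ to identify every stage of the exhaustion with $\lgd(\Z^n_+)$. You check, correctly, that the inclusion of stage $R$ into stage $R{+}1$ becomes the shift endomorphism $s$ of $\lgd(\Z^n_+)$. You then conclude from Theorem~\ref{thm-s-star-equals-1} that the direct system of homology groups has isomorphisms as transition maps.

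This is shorter, and it shows why the half-space is easy: it is self-similar under translation. The collapse argument, by contrast, treats both cases uniformly. It is also the only option when successive stages are not isometric, as with the $Y_R$. The two routes are not really independent, since Theorem~\ref{thm-s-star-equals-1} is itself an instance of Theorem~\ref{thm-using-the-flip}.

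One point of precision, which the paper also glosses over. The collapse isomorphisms $\tau_\alpha$ do not commute on the nose with the stabilizations $\iota_{\beta,\alpha}$; they differ by a permutation of tensor factors. So $\tau$ is not literally a morphism $\lgd(Y_R)\to\lgd(\Z^{n-1})$ of direct limits. Your conclusion survives, because surjectivity and injectivity of $j_*$ can be checked stage by stage in $\alpha$. For surjectivity, use that $\iota_{2\alpha',\alpha'}\circ j\circ\tau$ and $\iota_{2\alpha',\alpha}$ agree on homology. For injectivity, use $\tau_\beta\circ j=\iota_{\beta',\beta}$ at a stage $\beta$ where $j_*c$ already vanishes.
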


\begin{proof}
Let us consider first the morphism 
\begin{equation}
    \label{eq-induced-morphism-in-homology}
    H_*(\lgd(\Z^{n-1}),\C) \to H_*(\lgd(\Z^n_{+},\Z^{n-1}),\C).
\end{equation}
We shall use the formula 
\[
\lgd(\Z^n_{+},\Z^{n-1}) = \bigcup _{k} \lgd (\Z^{n-1}{\times}\{ 0,\dots, k\}),
\]
from Lemma~\ref{lem-lgd-ideal-is-increasing-union},  using which  it suffices to prove that the inclusions
\begin{equation}
    \label{eq-the-morphism-j}
  j\colon \lgd  (\Z^{n-1})\longrightarrow \lgd  \bigl(\Z^{n-1}{\times}\{ 0,\dots, k\}\bigr)
\end{equation}
that are induced from the inclusions 
\[
\begin{gathered}
    \Z^{n-1} \longrightarrow \Z^{n-1}{\times}\{ 0,\dots, k\}
    \\
    (k_1,
    \dots, k_{n-1})\longmapsto (k_1,
    \dots, k_{n-1},0)
\end{gathered}
\]
give rise to isomorphisms in homology.

Given $\alpha\colon \Z^{n-1}{\times} \{ 0,\dots, k\}\to \N$, define 
\[
\alpha'\colon \Z^{n-1} \longrightarrow  \N
\]
by the formula 
\begin{multline*}
\alpha'(k_1,\dots, k_{n-1})
=
\alpha(k_1,\dots, k_{n-1},0)+ \alpha(k_1,\dots, k_{n-1},1)
\\
   +\cdots + \alpha(k_1,\dots, k_{n-1},k) .
\end{multline*}
Using the standard isomorphism
\[
A_{\alpha'(k_1,\dots, k_{n-1},0)}
\cong A_{\alpha(k_1,\dots, k_{n-1},0)}\otimes A_{\alpha(k_1,\dots, k_{n-1},1)}\otimes \cdots \otimes A_{\alpha(k_1,\dots, k_{n-1},k)}
\]
(both sides are tensor products of the same number of copies of $M_2(\C)$), we obtain an isomorphism
\[
A_\alpha \bigl (\Z^{n-1}{\times} \{ 0,\dots, k\}\bigr ) \cong A_{\alpha'} (\Z^{n-1}  ) 
\]
that induces a morphism (in fact an isomorphism)
\[
\tau\colon \lgd_\alpha \bigl(\Z^{n-1}{\times} \{ 0,\dots, k\}\bigr ) \longrightarrow \lgd_{\alpha'} (\Z^{n-1}  ) .
\]
The composition
\[
\xymatrix{
\lgd_{\alpha} (\Z^{n-1}  ) \ar[r]^-j & 
\lgd_\alpha \bigl(\Z^{n-1}{\times} \{ 0,\dots, k\}\bigr ) \ar[r]^-\tau  &
\lgd_{\alpha'} (\Z^{n-1}  )
}
\]
is equal to the standard inclusion 
\[
\iota_{\alpha',\alpha}\colon \lgd_{\alpha} (\Z^{n-1}  ) \longrightarrow 
\lgd_{\alpha'} (\Z^{n-1}  ),
\]
while by Theorem~\ref{thm-using-the-flip}, the composition 
\[
\xymatrix{
\lgd_\alpha \bigl(\Z^{n-1}{\times} \{ 0,\dots, k\}\bigr ) \ar[r]^-\tau  &
\lgd_{\alpha'} (\Z^{n-1}  ) \ar[r]^-j &
\lgd_{\alpha'} \bigl(\Z^{n-1}{\times} \{ 0,\dots, k\}\bigr ) 
}
\]
followed by the standard inclusion into $\lgd_{2\alpha'} \bigl(\Z^{n-1}{\times} \{ 0,\dots, k\}\bigr ) $ induces the same map in homology as the standard inclusion
\[
\iota_{2\alpha',\alpha}\colon \lgd_{\alpha} (\Z^{n-1}{\times} \{ 0,\dots, k\}  ) \longrightarrow 
\lgd_{2\alpha'} (\Z^{n-1} {\times} \{ 0,\dots, k\} ),
\]
Taking a direct limit over all $\alpha$, it follows that the morphism $j$ in \eqref{eq-the-morphism-j} induces an isomorphism in homology, as required.
\end{proof}

\section{Locally generated derivations of n-space}
\label{sec-homology-of-lgd-for-n-space}
In this section we shall   compute the  homology of the Lie algebra $\lgd(\Z^n)$  with trivial coefficients $\C$, for all $n\ge 0$. 

\subsection{Half-spaces (Eilenberg swindle)}
\label{sec-eilenberg-swindle}

The proof of the following theorem  is a variation on 
Eilenberg swindle arguments used in controlled topology; see for instance \cite{PedersenWeibel85} for perhaps the earliest example.   We need only adapt those arguments using  the abstract Eilenberg swindle method described  in Section~\ref{sec-abstract-eilenberg-swindle}.

\begin{theorem}
\label{thm-eilenberg-swindle-for-half-spaces}
If  $n\ge 1$, then
\[
H_p(\lgd(\Z^n_+),\C)  = 0  \qquad \forall p > 0.
\]
\end{theorem}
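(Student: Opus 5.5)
We plan to apply the abstract Eilenberg swindle, Lemma~\ref{lem-abstract-swindle}, to $\mathfrak{g}=\lgd(\Z^n_+)$, using the product morphism $\mu$ of \eqref{eq-even-odd-definition-of-mu} and the correcting morphisms of Section~\ref{sec-correcting-morphisms}. The $\mathrm{id}^\infty$ we want is an ``infinite convolution power'' built from the shift $s(k_1,\dots,k_n)=(k_1,\dots,k_n{+}1)$, which maps $\Z^n_+$ into itself. For $\delta=\sum_x \ad_{H_x}\in\lgd_\alpha(\Z^n_+)$ we set
\[
\mathrm{id}^\infty(\delta)=\sum_{m\ge 0} \gamma_m\bigl(s^m(\delta)\bigr),
\]
where $\gamma_m$ places the tensor factors of the $m$-th shifted copy into pairwise disjoint blocks of factors (for instance via $k\mapsto 2^{m+1}k+2^m-1$, or some other injective assignment). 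Because the blocks are disjoint, the summands commute and the sum is a Lie algebra morphism. The point of the half-space is this: a given site $y$ receives contributions only from the copies $s^m$ with $m\le y_n$. So only finitely many copies land at each site, and a new dimension function $\beta(y)\approx\sum_{m\le y_n}\alpha(s^{-m}y)$, which is finite but unbounded, accommodates them all. The generators $\gamma_m(s^m(H_x))$ lie in balls of radius $R$ about $s^m(x)$. Regrouping them by site gives finitely many generators per site, so $\mathrm{id}^\infty(\delta)\in\lgd_\beta(\Z^n_+)$. Compatibility with the directed system in $\alpha$ needs the block assignment to be chosen coherently, and I would arrange this by fixing a bijection $\N\times\N\to\N$ once and for all.

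Next we verify that $(\mathrm{id}\star\mathrm{id}^\infty)_*=\mathrm{id}^\infty_*$. By definition $\mathrm{id}\star\mathrm{id}^\infty=\mu\circ(\mathrm{id},\mathrm{id}^\infty)$ sends $\delta$ to
\[
\gamma_{\mathrm{even}}(\delta)+\gamma_{\mathrm{odd}}\Bigl(\sum_{m\ge 0}\gamma_m s^m(\delta)\Bigr).
\]
This is again an infinite sum over $m\ge0$ of block-embedded shifted copies. Only the block embedding differs, and the $m$-th copy of $\mathrm{id}\star\mathrm{id}^\infty$ corresponds to the $(m-1)$-th copy of $\mathrm{id}^\infty$. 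It therefore suffices to compare $\mathrm{id}^\infty$ with $\mathrm{id}^\infty\circ s$ and with reindexings of the blocks, and we do this in three steps.

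First, the relation $\mathrm{id}^\infty=\gamma_0+\gamma'(\mathrm{id}^\infty\circ s)$ holds after rearranging blocks, for a suitable injective $\gamma'$.

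Second, any two choices of injective, site-dependent block assignments differ by a morphism of the kind treated in Section~\ref{sec-correcting-morphisms}, or more generally by a local unital algebra morphism as in Theorem~\ref{thm-using-the-flip}. Such morphisms act as the identity on homology after stabilization.

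Third, $s_*=\mathrm{id}$ by Theorem~\ref{thm-s-star-equals-1}.

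Together these give $(\mathrm{id}\star\mathrm{id}^\infty)_*=\mathrm{id}^\infty_*\circ s_*=\mathrm{id}^\infty_*$, and Lemma~\ref{lem-abstract-swindle} then yields $H_p=0$ for $p>0$.

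The main obstacle is to make the second step rigorous. Theorem~\ref{thm-using-the-flip} is stated for a unital morphism $\tau\colon A_\alpha(X)\to A_\beta(X)$ with the locality property \eqref{eq-local-property-of-algebra-morphism-tau}, and $\mathrm{id}^\infty$ is not induced by any single such $\tau$ on $A_\alpha$. I would first try to factor $\mathrm{id}^\infty$ through the algebra $\bigotimes_m A_\alpha$ reorganized as $A_{\beta}(\Z^n_+)$. In that form both $\mathrm{id}\star\mathrm{id}^\infty$ and $\mathrm{id}^\infty\circ s$ become compositions of a common morphism $\lgd_\alpha\to\lgd_\beta$ with a bounded-propagation unital algebra morphism $A_\beta\to A_{\beta'}$, namely a permutation of tensor factors between sites at distance $\le1$. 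The flip argument of Theorem~\ref{thm-using-the-flip} handles such morphisms. This argument needs $X$ to be uniformly properly discrete, which $\Z^n_+$ is, but it allows unbounded $\beta$.
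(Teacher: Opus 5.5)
Your proposal is correct and is essentially the paper's proof. The paper likewise applies Lemma~\ref{lem-abstract-swindle} to $\mathrm{id}^\infty(\delta)=\sum_m\gamma_m(s^m(\delta))$ with disjoint-range injections $\gamma_m$, and finishes with Theorems~\ref{thm-gamma-morphisms-trivial-on-homology} and~\ref{thm-s-star-equals-1}; the only difference is that it starts the sum at $m=1$, so that $\mathrm{id}^\infty=\gamma\circ(\mathrm{id}\star\mathrm{id}^\infty)\circ s$ holds exactly for a correcting morphism $\gamma$.

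With your indexing from $m=0$, the right identity is instead $\mathrm{id}^\infty=\mathrm{id}\star(\mathrm{id}^\infty\circ s)$, which holds on the nose for your choice $\gamma_m=\gamma_{\mathrm{odd}}^m\circ\gamma_{\mathrm{even}}$. Since $(\varphi\star\psi)_*=\mu_*(\varphi_*\otimes\psi_*)\Delta_*$ depends only on $\varphi_*$ and $\psi_*$, the fact $s_*=\mathrm{id}$ then gives $(\mathrm{id}\star\mathrm{id}^\infty)_*=\mathrm{id}^\infty_*$ directly. This should replace your stated chain through $\mathrm{id}^\infty_*\circ s_*$. It also means the ``main obstacle'' you describe never arises, because no homological property of $\mathrm{id}^\infty$ itself is needed.
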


\begin{proof}
Let $n\ge 1$.  In view of Theorem~\ref{thm-absolute-v-relative} it suffices to prove that  
\[
H_p(\lgd(\Z^n,\Z^n_{+}), \C)  = 0 \qquad \forall p > 0 .
\]
For $\mathfrak{g} =  \lgd(\Z^n_+)$, we define a Lie algebra morphism
\[
 \mathrm{id}^\infty : \mathfrak{g}   \longrightarrow \mathfrak{g} 
\]
 by means of the formula 
\[
\mathrm{id}^\infty(\delta)  = 
 \gamma_1(s(\delta)) +  \gamma_2(s^2(\delta))+  \gamma_3(s^3(\delta))
 + \cdots ,
\]
where $\gamma_1,\gamma_2,\dots $ are 1-1 maps from $\N$ to itself with disjoint ranges. Note that this map is well-defined, in that the image really does lie in  $\lgd(\Z_+^n)$.

According to Lemma~\ref{lem-abstract-swindle}, to prove the theorem it suffices to show that 
\[
(\mathrm{id} \star \mathrm{id}^\infty)_* = \mathrm{id}^{\infty,*} \colon H_*(\mathfrak{g}, \C) \longrightarrow H_*(\mathfrak{g}, \C).
\]
But there is a correcting morphism $\gamma\colon \mathfrak{g} \to \mathfrak{g}$ such that the diagram 
\[
\xymatrix{
\mathfrak{g}\ar[r]^-{\Delta} & \mathfrak{g}\times \mathfrak{g}
\ar[r] ^{(\mathrm{id}\,,\,\mathrm{id}^\infty)} &\mathfrak{g}\times \mathfrak{g}\ar[r]^-{\mu} & 
\mathfrak{g} \ar[d]^\gamma
\\
\mathfrak{g}\ar[u]^s \ar[rrr]_{\mathrm{id}^\infty} &&   & \mathfrak{g} 
}
\]
commutes exactly, at the level of Lie algebras, so the theorem is proved by an appeal to Theorems~\ref{thm-gamma-morphisms-trivial-on-homology} and~\ref{thm-s-star-equals-1}. 
\end{proof}

\subsection{Suspension isomorphism for the primitive part of the homology of the Lie algebra of locally generated derivations}
We are now ready to present the main result of this paper:

\begin{theorem}
\label{thm-suspension-isomorphism}
  Let $n \ge 1$.   The primitive parts of the homology of $\lgd (\Z^n)$  and of $\lgd (\Z^{n-1})$ are related as follows: 
  \[
  \Prim (H_p (\lgd(\Z^{n}), \C)) \cong 
  \begin{cases}
       0 & p = 1
       \\
       \Prim(H_{p-1} (\lgd(\Z^{n-1}), \C)) & p > 1.
  \end{cases}
  \]
\end{theorem}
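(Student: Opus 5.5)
The plan is to apply the primitive element theorem, Theorem~\ref{thm-primitive-element-theorem}, twice, with vanishing input supplied by the Eilenberg swindle of Theorem~\ref{thm-eilenberg-swindle-for-half-spaces}.

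\emph{First application.} Take the admissible pair $(\mathfrak{g},\mathfrak{h}) = (\lgd(\Z^n_+), \lgd(\Z^n_+,\Z^{n-1}))$. Admissibility comes from Theorem~\ref{thm-gamma-morphisms-trivial-on-homology}. Triviality of the action of $\mathfrak{g}/\mathfrak{h}$ on $H_*(\mathfrak{h},\C)$ follows from Theorem~\ref{thm-trivial-action-of-lgd-on-homology}, after passing to the direct limit over $\alpha$. The vanishing of $H_p(\mathfrak{g},\C)$ for $p>0$ is Theorem~\ref{thm-eilenberg-swindle-for-half-spaces}. The theorem then yields
\[
\Prim H_{p+1}\bigl(\lgd(\Z^n_+)/\lgd(\Z^n_+,\Z^{n-1})\bigr)\cong \Prim H_p\bigl(\lgd(\Z^n_+,\Z^{n-1})\bigr),
\]
and by Theorem~\ref{thm-absolute-v-relative} the right-hand side is $\Prim H_p(\lgd(\Z^{n-1}))$. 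This identification holds as Hopf algebras, since the inclusion is compatible with the product morphisms.

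\emph{Second application.} Take $(\mathfrak{g},\mathfrak{h}) = (\lgd(\Z^n_-), \lgd(\Z^n_-,\Z^{n-1}))$. The reflection $k_n\mapsto -k_n$ shows that the swindle theorem also applies to $\Z^n_-$. Next use excision, Theorem~\ref{thm-excision-isomorphism}, for the decomposition $\Z^n=\Z^n_-\cup\Z^n_+$, which is $\omega$-excisive with $\Z^n_-\cap\Z^n_+=\Z^{n-1}$. Passing to the limit over $\alpha$, this gives
\[
\lgd(\Z^n_-)/\lgd(\Z^n_-,\Z^{n-1})\cong \lgd(\Z^n)/\lgd(\Z^n,\Z^n_+).
\]
The isomorphism is compatible with the product morphisms, so the primitive parts of homology agree.

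\emph{Third step: an ideal of $\lgd(\Z^n)$.} Apply the primitive element machinery to the pair $(\lgd(\Z^n),\lgd(\Z^n,\Z^n_+))$. Here $\mathfrak{h}$ is acyclic: Theorem~\ref{thm-absolute-v-relative} identifies its homology with $H_*(\lgd(\Z^n_+))$, which vanishes by the swindle. In the Hochschild–Serre spectral sequence $E^2_{p,q}=H_p(\mathfrak{g}/\mathfrak{h})\otimes H_q(\mathfrak{h})$ is then concentrated on $q=0$. So $H_*(\lgd(\Z^n))\cong H_*(\lgd(\Z^n)/\lgd(\Z^n,\Z^n_+))$, and this isomorphism is induced by the quotient map, hence compatible with coproducts. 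Combining the three steps gives
\[
\Prim H_{p+1}(\lgd(\Z^n))\cong \Prim H_p(\lgd(\Z^{n-1}))
\]
for $p\ge 1$. For $p=1$, i.e.\ degree one, the first step of Theorem~\ref{thm-primitive-element-theorem} gives $H_1(\mathfrak{g}/\mathfrak{h})=0$, since $H_1$ of the half-space algebra vanishes and surjects onto $H_1$ of the quotient. Hence $\Prim H_1(\lgd(\Z^n))=0$.

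\emph{Expected obstacle.} The main difficulty is the bookkeeping that makes everything compatible with the Hopf structures. One must check that the excision isomorphism and the reflection intertwine the even/odd product morphisms and correcting morphisms, so that the primitive subspaces correspond. One must also check that Theorem~\ref{thm-trivial-action-of-lgd-on-homology} survives the direct limit over $\alpha$, and that the swindle applies to both half-spaces. I expect these to be routine, because all the morphisms involved are induced from spatial maps that commute with the $\gamma$-embeddings.
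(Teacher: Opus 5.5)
Your proposal is correct and follows essentially the same route as the paper: pass to the quotient by the acyclic ideal $\lgd(\Z^n,\Z^n_+)$, use excision to identify that quotient with $\lgd(\Z^n_-)/\lgd(\Z^n_-,\Z^{n-1})$, apply the primitive element theorem to the half-space pair, and then use Theorem~\ref{thm-absolute-v-relative} to reach $\lgd(\Z^{n-1})$. Two of your steps are more than needed: only one application of the primitive element theorem is required, since your $\Z^n_+$ version is carried to the $\Z^n_-$ one by the reflection, and the Hopf-compatibility checks you anticipate are unnecessary, because primitive elements depend only on the coproduct, which every Lie algebra morphism inducing a homology isomorphism automatically respects.
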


For the reader's convenience, the following proposition gathers results that we have proved up to this point, and that will be used in the proof of Theorem~\ref{thm-suspension-isomorphism}:

\begin{proposition} 
\label{prop-summary-result}
Let $n\ge 1$. 
\begin{enumerate}[\rm (i)]

\item The inclusion morphisms
 The inclusions 
    \[
    \lgd(\Z^{n-1}) \to \lgd(\Z^n_{+},\Z^{n-1})
    \quad\text{and} \quad 
    \lgd(\Z_+^n) \to \lgd(\Z^n,\Z_+^n)
    \]
    induce  isomorphisms on homology with coefficients in the trivial module $\C$.

\item 
The inclusion morphism
\[
\lgd(\Z^n_-)/ \lgd(\Z^n_-, \Z^{n-1}) \longrightarrow \lgd (\Z^n)/\lgd(\Z^n,\Z^n_+)
\]
is an isomorphism, and in particular it induces an isomorphism in homology with coefficients in the trivial module $\C$.

\end{enumerate}
\end{proposition}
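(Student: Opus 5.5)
Part (i) is exactly Theorem~\ref{thm-absolute-v-relative}, so nothing new is needed there; I will simply cite it. The substance is part (ii), which I plan to deduce from the excision isomorphism, Theorem~\ref{thm-excision-isomorphism}, applied to the decomposition $W = \Z^n = X \cup Z$ with $X = \Z^n_-$ and $Z = \Z^n_+$. Then $X \cap Z = \Z^{n-1}$, and the theorem gives, for every $\alpha\colon \Z^n \to \N$, an isomorphism
\[
\lgd_\alpha(\Z^n_-)/\lgd_\alpha(\Z^n_-,\Z^{n-1}) \stackrel\cong\longrightarrow \lgd_\alpha(\Z^n)/\lgd_\alpha(\Z^n,\Z^n_+)
\]
induced by inclusion.

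First I must verify that the decomposition is $\omega$-excisive. Take $R>0$. A point $w$ in $\operatorname{Pen}(\Z^n_-;R)\cap\operatorname{Pen}(\Z^n_+;R)$ has last coordinate $k_n$ with $|k_n|\le R$. This holds for the Euclidean metric, and for any of the standard equivalent metrics, because the distance from $w$ to a half-space bounds the relevant one-sided part of $|k_n|$. Replacing $k_n$ by $0$ gives a point of $\Z^{n-1}$ at distance $\le R$, so the intersection lies in $\operatorname{Pen}(\Z^{n-1};R)$ and we may take $S=R$. Note that the penumbras here are taken in $W$, whereas the ideal $\lgd(\Z^n_-,\Z^{n-1})$ uses balls in $X=\Z^n_-$. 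This causes no difficulty, because the excision theorem is already formulated with exactly these conventions.

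Next I pass to the direct limit over $\alpha$. The quotients are compatible with the embeddings $\iota_{\beta,\alpha}$, and a direct limit of isomorphisms of Lie algebras is an isomorphism. One point needs care: the limit on the left runs over functions on $\Z^n_-$, while the excision theorem uses functions on $\Z^n$. Restriction $\alpha\mapsto\alpha|_{\Z^n_-}$ is cofinal, and $\lgd_\alpha(\Z^n_-)$ depends only on that restriction, so the two limits agree. Direct limits are exact, so the quotient of the limits is the limit of the quotients. This yields the isomorphism of Lie algebras in (ii). The homology statement then follows at once, since an isomorphism of Lie algebras induces an isomorphism on $H_*(\,\cdot\,,\C)$.

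The only step that needs any attention is the $\omega$-excision check together with the bookkeeping of the direct limits. I expect neither to be a real obstacle.
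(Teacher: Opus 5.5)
Your proposal is correct and follows the paper's proof: part (i) is Theorem~\ref{thm-absolute-v-relative}, and part (ii) is the excision isomorphism of Theorem~\ref{thm-excision-isomorphism} applied to $\Z^n = \Z^n_- \cup \Z^n_+$ with $\Z^n_-\cap\Z^n_+ = \Z^{n-1}$. Your $\omega$-excisiveness check (with $S=R$) and your passage from fixed $\alpha$ to the direct limit (via cofinality of restriction and exactness of direct limits) are sound, and they fill in details the paper leaves implicit.
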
 

\begin{proof}
    Item (i) is Theorem~\ref{thm-absolute-v-relative}, and item (ii) is the excision isomorphism in Theorem~\ref{thm-excision-isomorphism}.
\end{proof} 

We shall need to add one more preparatory result to the above list, which is a consequence of the lemma below.

\begin{lemma}
\label{lem-spectral-sequence-comparison}
    Suppose given a commuting diagram of Lie algebras and Lie algebra morphisms,
    \[
    \xymatrix{
    \mathfrak{h}_0\ar[r] \ar[d] & \mathfrak{g}_0 \ar[d]
    \\
     \mathfrak{h}_1 \ar[r]   & \mathfrak{g}_1  
    }
    \]
     in which both of the  horizontal morphisms are inclusions of ideals. If the morphism $\mathfrak{h}_0 \to \mathfrak{h}_1$ and the associated morphism   $\mathfrak{g}_0/\mathfrak{h}_0 \to \mathfrak{g}_1/\mathfrak{h}_1$  both induce isomorphisms in homology with trivial coefficients $\C$, then so does the morphism $\mathfrak{g}_0\to \mathfrak{g}_1$.
\end{lemma}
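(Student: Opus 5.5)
The plan is to compare the Hochschild--Serre spectral sequences of the two pairs $(\mathfrak{g}_0,\mathfrak{h}_0)$ and $(\mathfrak{g}_1,\mathfrak{h}_1)$ and then invoke the Zeeman comparison theorem. I expect one genuine gap, at the level of twisted coefficients on the $E^2$-page, and say below exactly where it sits.

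\textbf{Step 1: a morphism of spectral sequences.} The double complex \eqref{eq-first-quadrant-double-cplx} is built functorially from $V_*(\mathfrak{g}/\mathfrak{h})$ and $[V_*(\mathfrak{g})]_{\mathfrak{h}}$. A commuting square of Lie algebras with ideal inclusions as horizontal maps therefore induces a morphism of double complexes $E^0(\mathfrak{g}_0,\mathfrak{h}_0)\to E^0(\mathfrak{g}_1,\mathfrak{h}_1)$. This gives a morphism of the two spectral sequences in Section~\ref{sec-hochschild-serre}.
\begin{itemize}
\item For the first spectral sequence, the identification in Lemma~\ref{lem-first-spectral-sequence} is natural. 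So the map on the homology of the totalizations is the map $H_*(\mathfrak{g}_0,\C)\to H_*(\mathfrak{g}_1,\C)$ induced by $\mathfrak{g}_0\to\mathfrak{g}_1$.
\item For the second spectral sequence, Corollary~\ref{cor-h-2-homology-of-e-zero} and \eqref{eq-e2-term-of-spectral-sequence} give natural isomorphisms $E^2_{p,q}\cong H_p(\mathfrak{g}_i/\mathfrak{h}_i, H_q(\mathfrak{h}_i,\C))$. The induced map on $E^2$ is the evident one: the map on Lie algebras $\mathfrak{g}_0/\mathfrak{h}_0\to\mathfrak{g}_1/\mathfrak{h}_1$ paired with the coefficient map $H_q(\mathfrak{h}_0)\to H_q(\mathfrak{h}_1)$.
\end{itemize}
The coefficient map is equivariant, because the action of Section~\ref{sec-derivations-and-automorphisms} is defined by the formula \eqref{eq-action-of-differential-on-c-e-complex}, which is natural.

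\textbf{Step 2: comparison.} Both spectral sequences are first-quadrant and converge to the homology of the totalizations. If the map is an isomorphism on $E^2$, it is an isomorphism on every later page by induction, since a map of complexes inducing an isomorphism on homology carries $E^r$ isomorphically to $E^{r+1}$. It is then an isomorphism on $E^\infty$. Because the filtrations are finite in each total degree, a five-lemma argument on the associated graded shows that $H_*(\mathfrak{g}_0,\C)\to H_*(\mathfrak{g}_1,\C)$ is an isomorphism.

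\textbf{Step 3: the $E^2$-page (main obstacle).} By hypothesis the coefficient map $M_0=H_q(\mathfrak{h}_0)\to M_1=H_q(\mathfrak{h}_1)$ is a linear isomorphism, and by Step 1 it is equivariant. Hence $M_0$ is just $M_1$ pulled back along $\mathfrak{g}_0/\mathfrak{h}_0\to\mathfrak{g}_1/\mathfrak{h}_1$. So what remains is to show that
\[
H_p(\mathfrak{g}_0/\mathfrak{h}_0, M_1)\longrightarrow H_p(\mathfrak{g}_1/\mathfrak{h}_1, M_1)
\]
is an isomorphism. This is precisely where the lemma's hypotheses are insufficient: an isomorphism with trivial coefficients does not by itself control homology with coefficients in an arbitrary module. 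The cases that can be handled are these.
\begin{itemize}
\item \emph{Trivial action on $M_1$.} The Künneth-type identification of Lemma~\ref{lem-homology-with-coeffs-in-trivial-module} reduces the claim to the trivial-coefficient hypothesis.
\item \emph{Finite filtration with trivial graded pieces.} If $M_1$ has a finite filtration by submodules whose graded pieces carry trivial action, the long exact sequences in homology and the five lemma reduce to the trivial case.
\item \emph{The quotient map is itself a Lie algebra isomorphism.} Then the displayed map is tautologically an isomorphism for every module, with no condition on $M_1$.
\end{itemize}
The third case is the one relevant here: in Proposition~\ref{prop-summary-result}(ii) the map $\mathfrak{g}_0/\mathfrak{h}_0\to\mathfrak{g}_1/\mathfrak{h}_1$ is such an isomorphism, by excision. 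My first attempt would therefore be to prove the $E^2$ statement for all modules whenever the quotient map is an isomorphism. For the fully general wording I would try to reduce to the first case using Theorem~\ref{thm-trivial-action-of-lgd-on-homology}. However, that reduction needs an input not contained in the lemma's hypotheses: either trivial action on $H_*(\mathfrak{h}_i,\C)$, or that the quotient map induces isomorphisms with twisted coefficients. I flag this step as the point where the argument, as stated, would have to be supplemented.
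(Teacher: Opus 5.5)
Your Steps 1 and 2 are exactly the paper's argument. The square induces a morphism of Hochschild--Serre spectral sequences, an isomorphism on $E^2$ propagates to $E^\infty$, and the filtrations are finite in each degree. The paper disposes of your Step 3 in one unexplained clause (``the hypotheses of the theorem imply that this morphism is an isomorphism of $E^2$-pages''), and your objection to that clause is correct. In fact the lemma as literally stated is false, so no supplementary argument can rescue it in full generality.

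Here is a counterexample. Let $\mathfrak{g}_1$ have basis $x,y,z$ with $[x,y]=y$, $[x,z]=-z$, $[y,z]=0$. Let $\mathfrak{g}_0=\operatorname{span}\{x,z\}$ and $\mathfrak{h}_0=\mathfrak{h}_1=\C z$, with all maps inclusions.
\begin{itemize}
\item The map $\mathfrak{h}_0\to\mathfrak{h}_1$ is the identity.
\item The quotient map $\C x\to\operatorname{span}\{x,y\}$ induces an isomorphism on $H_*(-,\C)$. Both sides have $H_0=\C$ and $H_1$ spanned by the class of $x$. Moreover $H_2(\operatorname{span}\{x,y\},\C)=0$, because $b'(x\wedge y)=-y\neq 0$.
\item However, $b'(x\wedge y\wedge z)=-[x,y]\wedge z+[x,z]\wedge y-[y,z]\wedge x=-y\wedge z-z\wedge y=0$. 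Hence $H_3(\mathfrak{g}_1,\C)\cong\C$, whereas $H_3(\mathfrak{g}_0,\C)=0$ since $\dim\mathfrak{g}_0=2$.
\end{itemize}
The failure sits exactly where you located it. $H_1(\C z,\C)=\C z$ is the module on which $x$ acts by $-1$. For this module, $H_p(\operatorname{span}\{x,y\},\C z)\cong\C$ for $p=1,2$, while $H_p(\C x,\C z)=0$ for all $p$.

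What is true is the lemma with your first case added as a hypothesis, namely that $\mathfrak{g}_i/\mathfrak{h}_i$ acts trivially on $H_*(\mathfrak{h}_i,\C)$. Then Lemma~\ref{lem-homology-with-coeffs-in-trivial-module} makes the $E^2$-map a tensor product of isomorphisms. It is also true under your third case.

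One correction: the lemma is not used for Proposition~\ref{prop-summary-result}(ii), which is an isomorphism of Lie algebras and needs no spectral sequence. Its only use is in Proposition~\ref{prop-quotient-by-acyclic-ideal}. There $\mathfrak{h}_0=\lgd(\Z^n,\Z^n_+)$ and $\mathfrak{h}_1=0$ are both acyclic. So both $E^2$-pages are concentrated in the row $q=0$, whose coefficients $H_0(\mathfrak{h}_i,\C)=\C$ are automatically trivial; moreover, the quotient map is the identity. Neither Theorem~\ref{thm-trivial-action-of-lgd-on-homology} nor any further input is needed there. The paper's downstream results are unaffected, and your argument is a complete proof of the version actually used.
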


\begin{proof} 
Denote by $\{ E^r_{*,*}(\mathfrak{g}_0,\mathfrak{h}_0)\}_{r\ge 0}$ and $\{ E^r_{*,*}(\mathfrak{g}_1,\mathfrak{h}_1)\}_{r\ge 0}$ the Hochschild-Serre spectral sequences for the pairs $(\mathfrak{g}_0,\mathfrak{h}_0)$ and $(\mathfrak{g}_1,\mathfrak{h}_1)$, as reviewed in Section \ref{sec-hochschild-serre}. The diagram in the statement of the theorem gives a morphism from the first spectral sequence to the second, and the hypotheses of the theorem imply that this morphism is an isomorphism of $E^2$-pages. So by the spectral sequence isomorphism theorem (see for instance \cite[Thm.~3.4]{McCleary}) the morphism is an isomorphism between $E^\infty$-terms. Hence the induced morphism 
\[
H_*(\mathfrak{g}_0, \C) \longrightarrow H_*(\mathfrak{g}_1, \C)
\]
is an isomorphism, too, since both sides have compatible filtrations (finite in each degree), for which the morphism is an isomorphism on subquotients.
\end{proof}

\begin{proposition} 
\label{prop-quotient-by-acyclic-ideal}
Let $n\ge 1$. The quotient morphism 
\[
\lgd (\Z^n) \longrightarrow \lgd(\Z^n) / \lgd (\Z^n, \Z^n_+)
\]
induces an isomorphism in homology with trivial coefficients $\C$.
\end{proposition}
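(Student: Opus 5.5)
The plan is to apply the Hochschild--Serre spectral sequence to the pair $(\mathfrak{g},\mathfrak{h}) = (\lgd(\Z^n), \lgd(\Z^n,\Z^n_+))$, and to show that the ideal $\mathfrak{h}$ is acyclic and acts trivially. Concretely, we shall prove that $H_q(\mathfrak{h},\C)=0$ for all $q>0$. Then we shall check that $\mathfrak{g}/\mathfrak{h}$ acts trivially on $H_*(\mathfrak{h},\C)$, which is automatic since $H_0(\mathfrak{h},\C)=\C$ carries the trivial action. It is also available in general from Theorem~\ref{thm-trivial-action-of-lgd-on-homology}, after passing to the direct limit over $\alpha$.

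\textbf{Step 1: acyclicity of $\mathfrak{h}$.} By Proposition~\ref{prop-summary-result}(i), the inclusion $\lgd(\Z^n_+)\to\lgd(\Z^n,\Z^n_+)$ induces an isomorphism on homology with trivial coefficients. By Theorem~\ref{thm-eilenberg-swindle-for-half-spaces}, $H_q(\lgd(\Z^n_+),\C)=0$ for $q>0$. Together these give $H_q(\mathfrak{h},\C)=0$ for all $q>0$, while $H_0(\mathfrak{h},\C)=\C$.

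\textbf{Step 2: collapse of the spectral sequence.} Since $\mathfrak{g}/\mathfrak{h}$ acts trivially on $H_*(\mathfrak{h},\C)$, formula \eqref{eq-e2-term-of-spectral-sequence-for-trivial-action-on-homology} gives
\[
E^2_{p,q}\cong H_p(\mathfrak{g}/\mathfrak{h},\C)\hotimes H_q(\mathfrak{h},\C).
\]
By Step~1 this vanishes for $q>0$. The $E^2$-page is therefore concentrated on the bottom row, so all differentials $d^{(r)}$ with $r\ge 2$ vanish and $E^2=E^\infty$. The filtration on $H_p(\mathfrak{g},\C)$ then has a single nonzero subquotient, namely $E^\infty_{p,0}=H_p(\mathfrak{g}/\mathfrak{h},\C)$.

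\textbf{Step 3: identifying the edge map.} It remains to check that the resulting isomorphism is the map induced by the quotient morphism. The cleanest way to see this is to apply Lemma~\ref{lem-spectral-sequence-comparison} to the square whose top row is $\mathfrak{h}\to\mathfrak{g}$ and whose bottom row is $0\to\mathfrak{g}/\mathfrak{h}$, with the vertical maps being the zero map and the quotient. The map $\mathfrak{h}\to 0$ is an isomorphism in homology by Step~1. The induced map $\mathfrak{g}/\mathfrak{h}\to(\mathfrak{g}/\mathfrak{h})/0$ is the identity. Hence the lemma shows that $\mathfrak{g}\to\mathfrak{g}/\mathfrak{h}$ induces an isomorphism on homology.

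The only substantive input is Step~1, which rests on the swindle theorem and the absolute-versus-relative comparison, both already established. The main point requiring care is the passage to the direct limit in Proposition~\ref{prop-summary-result}(i), and that is already built into its statement.
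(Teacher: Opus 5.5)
Your proposal is correct and is essentially the paper's proof. The paper applies Lemma~\ref{lem-spectral-sequence-comparison} to exactly your square with bottom row $0\to \lgd(\Z^n)/\lgd(\Z^n,\Z^n_+)$, and it uses Proposition~\ref{prop-summary-result}(i) together with Theorem~\ref{thm-eilenberg-swindle-for-half-spaces} to show that $\lgd(\Z^n,\Z^n_+)\to 0$ induces an isomorphism in homology. Your Step~2, the direct collapse of the spectral sequence onto the bottom row, is correct but redundant, since Step~3 already covers it.
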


\begin{proof} 
Apply Lemma~\ref{lem-spectral-sequence-comparison} to the commuting diagram 
\[
\xymatrix{
\lgd(\Z^n,\Z^n_+) \ar[r]\ar[d] & \lgd (\Z^n)\ar[d] 
\\
0\ar[r] &  \lgd (\Z^n)/\lgd(\Z^n,\Z^n_+) 
}
\]
using part (i)  of Proposition~\ref{prop-summary-result} and Theorem~\ref{thm-eilenberg-swindle-for-half-spaces} to verify that the left-hand vertical map induces an isomorphism in homology.
\end{proof}

\begin{proof}[Proof of Theorem~\ref{thm-suspension-isomorphism}] 
Thanks to Proposition~\ref{prop-quotient-by-acyclic-ideal}, the quotient morphism 
\[
\lgd(\Z^{n}) \to\lgd(\Z^{n})/\lgd(\Z^n,\Z^n_+)
\]
induces an isomorphism 
\[
  H_*  \bigl(\lgd(\Z^{n}), \C\bigr )  \stackrel \cong \longrightarrow 
   H_* \bigl(\lgd(\Z^{n})/\lgd(\Z^n,\Z^n_+) , \C\bigr ),
\]
while according to part (ii) of Proposition~\ref{prop-summary-result} the morphism 
\[
\lgd(\Z^n_-)/ \lgd(\Z^n_-, \Z^{n-1}) \longrightarrow \lgd (\Z^n)/\lgd(\Z^n,\Z^n_+)
\]
is an isomorphism, and so in particular it induces an isomorphism 
\begin{multline}
\label{eq-homology-isomorphism-from-excision}
H_*\bigl (\lgd(\Z^n_-)/ \lgd(\Z^n_-, \Z^{n-1}), \C\bigr ) \\
\stackrel \cong \longrightarrow 
H_*\bigl ( \lgd (\Z^n)/\lgd(\Z^n,\Z^n_+), \C\bigr ) .
\end{multline}
According to Theorems~\ref{thm-primitive-element-theorem} (the primitive element theorem)   and \ref{thm-eilenberg-swindle-for-half-spaces} (applied to $\Z^n_-$ rather than $\Z^n_+$), there is an isomorphism
\begin{multline*}
\Prim \Bigl (H_{p+1}\bigl (\lgd(\Z^n_-)/ \lgd(\Z^n_-, \Z^{n-1}), \C\bigr )\Bigr) 
\\
\cong 
\Prim \Bigl (H_p\bigl (  \lgd(\Z^n_-,\Z^{n-1}), \C\bigr )\Bigr) 
\end{multline*}
for all $p\ge 0$.
Combining this isomorphism with \eqref{eq-homology-isomorphism-from-excision}, and then using part (i) of Proposition~\ref{prop-summary-result} we obtain isomorphisms
\[
\begin{aligned}
\Prim \Bigl (H_{p+1}\bigl (\lgd(\Z^n), \C\bigr )\Bigr) 
    & \cong 
\Prim \Bigl (H_p\bigl (  \lgd(\Z^n_-,\Z^{n-1}), \C\bigr )\Bigr) 
\\
    & \cong \Prim \Bigl (H_p\bigl (  \lgd(\Z^{n-1}), \C\bigr )\Bigr) ,
\end{aligned}
\]
for all $p\ge 0$, as required.
\end{proof} 

\begin{theorem}
\label{thm-explicit-formula-for-prinitive-part}
If $n\ge 0$, then
    \[
    \Prim \Bigl (H_{p}\bigl (\lgd(\Z^n), \C\bigr )\Bigr)  =
    \begin{cases}
        \C & p= n{+}3, n{+}5,n{+}7\dots 
        \\
         0 & \text{otherwise.}
    \end{cases}
    \]
\end{theorem}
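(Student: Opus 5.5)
The plan is to argue by induction on $n$, with Theorem~\ref{thm-homology-of-lgd-of-pt} as the base case and Theorem~\ref{thm-suspension-isomorphism} as the inductive step.

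For $n=0$ we have $\Z^0=\{0\}$, so $\lgd(\Z^0)=\lgd(\mathrm{pt})$. Theorem~\ref{thm-homology-of-lgd-of-pt} says that $\Prim(H_p(\lgd(\mathrm{pt}),\C))$ is $\C$ for $p=3,5,7,\dots$ and zero otherwise. This is exactly the asserted formula with $n=0$.

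Now fix $n\ge 1$ and assume the formula holds for $n-1$: the primitive part of $H_q(\lgd(\Z^{n-1}),\C)$ is $\C$ for $q=n+2,n+4,\dots$ and zero otherwise. Apply Theorem~\ref{thm-suspension-isomorphism}. In degree $p=1$ the primitive part of $H_1(\lgd(\Z^n),\C)$ vanishes, which agrees with the claim, since $1<n+3$. For $p>1$ we get
\[
\Prim(H_p(\lgd(\Z^n),\C))\cong\Prim(H_{p-1}(\lgd(\Z^{n-1}),\C)).
\]
By the inductive hypothesis this is $\C$ exactly when $p-1\in\{n+2,n+4,\dots\}$, that is, when $p\in\{n+3,n+5,\dots\}$, and zero otherwise. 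Degree $p=0$ is trivial because $\Prim(H_0)=0$ by definition. This completes the induction.

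The substantive input is therefore entirely contained in Theorem~\ref{thm-suspension-isomorphism} and the base case, and the only step needing attention here is the shift of indices. One should also note that Theorem~\ref{thm-suspension-isomorphism} gives only abstract vector space isomorphisms. That suffices, since the statement only concerns the dimension of each primitive space.
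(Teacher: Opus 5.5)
Your proposal is correct and is the argument the paper intends. The paper gives no separate proof, but its introduction says the result follows by combining the suspension isomorphism (Theorem~\ref{thm-suspension-isomorphism}) with the explicit one-point computation (Theorem~\ref{thm-homology-of-lgd-of-pt}), and your index bookkeeping, including the separate handling of degrees $p=0$ and $p=1$, is accurate.
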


\bibliographystyle{alpha}
\bibliography{refs}

\end{document}